\renewcommand{\paragraph}[1]{\par\vspace{1ex}\noindent #1}
\newcommand{\volker}[1]{{#1}}
\newcommand{\old}[1]{}
\newcommand{\vold}[1]{}
\newcommand{\new}[1]{{#1}}
\theoremstyle{plain}
\newtheorem{theorem}{Theorem}
\newtheorem{proposition}[theorem]{Proposition}
\newtheorem*{proposition*}{Proposition}
\newtheorem{corollary}[theorem]{Corollary}
\newtheorem*{corollary*}{Corollary}
\newtheorem{lemma}[theorem]{Lemma}
\newtheorem*{theorem*}{Theorem}
\newtheorem*{lemma*}{Lemma}
\newtheorem*{conjecture*}{Conjecture}
\newtheorem*{question*}{Question}
\theoremstyle{definition}
\newtheorem{definition}[theorem]{Definition}
\newtheorem*{exercise*}{Exercise}
\theoremstyle{remark}
\newtheorem*{remark*}{Remark}
\newtheorem{remsTh}[theorem]{Remarks}
\newcommand{\subclass}[1]{}
\newcommand{\enumTi}[1]{\renewcommand{\theenumi}{#1}}
\newcommand{\alphenumi}{\enumTi{\alph{enumi}}}
\newcommand{\romenumi}{\enumTi{\roman{enumi}}}
\renewcommand{\em}{\sl}
\DeclareMathOperator{\aff}{aff}
\DeclareMathOperator{\conv}{conv}
\newcommand{\Order}{O}
\newcommand{\sabs}[1]{{\lvert{#1}\rvert}}
\newcommand{\NN}{\mathbb{N}}
\newcommand{\RR}{\mathbb{R}}
\newcommand{\ZZ}{\mathbb{Z}}
\newlength{\algotabbingwidth}
\newcommand{\setDef}[2]{\{{#1}\,:\,{#2}\}}
\newcommand{\ints}[1]{[{#1}]}
\newcommand{\symGr}[1]{\mathfrak{S}({#1})}
\newcommand{\altGr}[1]{\mathfrak{A}({#1})}
\newcommand{\charFct}[1]{\mathbf{1}_{{#1}}}
\newcommand{\charVec}[1]{\chi({#1})}
\newcommand{\zeroVec}[1]{\mathbf{0}_{{#1}}}
\newcommand{\ident}[1]{\mathbf{I}_{{#1}}}
\DeclareMathOperator{\isoOp}{iso}
\newcommand{\isoGr}[2]{\isoOp_{{#1}}({#2})}
\DeclareMathOperator{\POp}{P}
\DeclareMathOperator{\matchOp}{match}
\newcommand{\PMatch}[2]{\POp_{\matchOp}^{{#1}}({#2})}
\DeclareMathOperator{\sptOp}{spt}
\newcommand{\PSpt}[1]{\POp_{\sptOp}({#1})}
\DeclareMathOperator{\cyclOp}{cycl}
\newcommand{\PCycl}[2]{\POp_{\cyclOp}^{{#1}}({#2})}
\DeclareMathOperator{\oddOp}{odd}
\newcommand{\match}[2]{\mathcal{M}^{{#1}}({#2})}
\newcommand{\oddints}[1]{[{#1}]_{\oddOp}}
\newcommand{\cycl}[2]{\mathcal{C}^{{#1}}({#2})}
\newcommand{\scalProd}[2]{\langle{#1},{#2}\rangle}
\DeclareMathOperator{\trOp}{t}
\newcommand{\transpose}[1]{{#1}^{\trOp}}
\DeclareMathOperator{\im}{im}
\newcommand{\isoms}[1]{\mathcal{O}({#1})}
\newcommand{\row}[2]{{#1}_{{#2},\star}}
\newcommand{\slackMap}[1]{\Delta\!^{#1}}
\newcommand{\slackMapRestr}[1]{\Delta\!^{#1}_{\perp}}
\newcommand{\slackRep}[1]{\Delta(#1)}
\DeclareMathOperator{\linealOp}{lineal}
\newcommand{\lineal}[1]{\linealOp({#1})}
\newcommand{\upop}{\varkappa}
\begin{document}

\title{Symmetry Matters for Sizes of Extended Formulations}%
\author{Volker Kaibel}%
\address{Volker Kaibel}
\email{kaibel@ovgu.de}%
\author{Kanstantsin Pashkovich}%
\address{Kanstantsin Pashkovich}
\email{pashkovi@imo.math.uni-magdeburg.de}%
\author{Dirk Oliver Theis}%
\address{Dirk Oliver Theis}%
\email{dirk.theis@ovgu.de}%
\thanks{Research by K. Pashkovich has been supported by the \emph{International Max Planck Research School for Analysis, Design and Optimization in Chemical and Biochemical Process Engineering Magdeburg}\\
	An extended abstract of this work has been published as~\cite{KPT10}: 
	V.~Kaibel, K.~Pashkovich, and D.\,O.~Theis, \emph{Symmetry matters for the sizes
	of extended formulations}, in: F.~Eisenbrand and B.~Shepherd (eds.), Integer Programming and Combinatorial Optimization (Proc. IPCO XIV), volume 6080 of LNCS, pages 135–-148. Springer, 2010.
}

\date{\today}%


\begin{abstract}

In 1991, Yannakakis~\cite{Yan91} proved that  no  symmetric extended formulation for the matching  polytope of the complete graph~$K_n$ with~$n$ nodes  has a number of variables and constraints that is bounded subexponentially in~$n$. Here, symmetric means  that the formulation remains invariant under all permutations of the nodes of~$K_n$. 
It was also conjectured in~\cite{Yan91}  that ``asymmetry does not help much,'' but no corresponding result for general extended formulations has been found so far. In this paper we show that for the polytopes associated with the matchings in~$K_n$ with $\lfloor\log n\rfloor$ edges there are non-symmetric extended formulations of polynomial size, while nevertheless no  symmetric extended formulations of polynomial size exist. We furthermore prove similar statements for the polytopes associated with cycles of length~$\lfloor\log n\rfloor$. Thus,  with respect to the question for  smallest possible extended formulations, in general symmetry requirements  may matter a lot. Compared to the extended abstract~\cite{KPT10}, this paper does not only contain proofs that had been ommitted there, but it also presents slightly generalized and sharpened lower bounds. 
\end{abstract}
\maketitle



\section{Introduction}

Linear Programming techniques have proven to be extremely fruitful for  combinatorial optimization problems
 with respect to both  structural analysis and the design of algorithms. In this context, the paradigm  is to represent the problem by a polytope~$P\subseteq\RR^m$ whose vertices correspond to the feasible solutions of the problem in such a way that the objective function can be expressed by a linear functional $x\mapsto\scalProd{c}{x}$ on~$\RR^m$ (with some $c\in\RR^m$). If one succeeds in finding a description of~$P$ by means of linear constraints, then algorithms as well as structural results from Linear Programming can be exploited. In many cases, however, the polytope~$P$ has exponentially  (in~$m$) many facets, thus~$P$ can only be described by exponentially many inequalities. Also it may be that the inequalities needed to describe~$P$ are too complicated to be identified. 

In some of these cases one may  find an \emph{extended formulation for~$P$}, i.e.,  a  (preferably small and simple) description by linear constraints of another polyhedron $Q\subseteq\RR^d$ in some higher dimensional space  that projects to~$P$ via some (simple) affine
 map $p:\RR^d\rightarrow\RR^m$ with $p(y)=Ty+t$ for all $y\in\RR^d$ (and some $T\in\RR^{m\times d}$, $t\in\RR^m$). As we have  
$\max\setDef{\scalProd{c}{x}}{x\in P}=\max\setDef{\scalProd{\transpose{T}c}{y}}{y\in Q}+\scalProd{c}{t}$ 
for each $c\in\RR^m$, one can solve linear optimization problems over~$P$ by solving linear optimization problems over~$Q$ in this case. 

As for a guiding example, let us consider the spanning tree polytope
\begin{equation*}
	\PSpt{n}=\conv\setDef{\charVec{T}\in\{0,1\}^{E_n}}{T\subseteq E_n\text{ spanning tree of }K_n}\,,
\end{equation*} 
where~$K_n=(\ints{n},E_n)$ denotes the complete graph with node set $\ints{n}=\{1,\dots,n\}$ and edge set $E_n=\setDef{\{v,w\}}{v,w\in \ints{n}, v\ne w}$, and $\charVec{A}\in\{0,1\}^B$ is the \emph{characteristic vector} of the subset $A\subseteq B$ of~$B$, i.e., for all $b\in B$, we have $\charVec{A}_b=1$ if and only if $b\in A$. Thus, $\PSpt{n}$ is the polytope associated with the bases of the graphical matroid of~$K_n$, and we have (see~\cite{Edm71})
\begin{multline}\label{eq:DescrPSpt}
	\PSpt{n}=\{x\in\RR_+^{E_n}\,:\, x(E_n)=n-1,\\
	           x(E(S))\le |S|-1\text{ for all }S\subseteq\ints{n}, 2\le |S|\le n-1\}\,,
\end{multline}
where $\RR_+^{E_n}$ is the nonnegative orthant of~$\RR^E_n$, we denote  by $E(S)$ the subset of all edges with both nodes in~$S$, and $x(F)=\sum_{e\in F}x_e$ for $F\subseteq E_n$.
This linear description of~$\PSpt{n}$ has an exponential (in~$n$) number of constraints, and as all the inequalities define pairwise different facets, none of them is redundant.   

The following  much smaller extended formulation for~$\PSpt{n}$ (with $\Order(n^3)$ variables and constraints)
appears in~\cite{CCZ10} (and a  similar one in~\cite{Yan91}, where it is attributed to~\cite{Mar87}).
Let us introduce additional variables $z_{e,v,u}$ for all  $e\in E_n$, $v\in e$, and $u\in \ints{n}\setminus e$. While each spanning tree $T\subseteq E_n$ is represented by its characteristic vector $x^{(T)}=\charVec{T}$ in~$\PSpt{n}$, in the extended formulation it will be represented by the vector $y^{(T)}=(x^{(T)},z^{(T)})$ with 
$z^{(T)}_{e,v,u}=1$ (for $e\in E_n$, $v\in e$, $u\in \ints{n}\setminus e$) if $e\in T$ and~$u$ is contained in the  component of~$v$ in $T\setminus e$, and with $z^{(T)}_{e,v,u}=0$ otherwise. 

The polyhedron~$Q_{\sptOp}(n)\subseteq\RR^d$  defined by the nonnegativity constraints $x\ge\zeroVec{}$,  $z\ge\zeroVec{}$,  the equations $x(E_n)=n-1$, 
\begin{equation}\label{eq:ForEF1}
	x_{\{v,w\}} - z_{\{v,w\},v,u}-z_{\{v,w\},w,u}=0\quad\text{for all pairwise distinct }v,w,u\in \ints{n}\,,
\end{equation}
as well as
\begin{equation}\label{eq:ForEF2}
	x_{\{v,w\}} + \sum_{u\in \ints{n}\setminus\{v,w\}}z_{\{v,u\},u,w}= 1
	\quad\text{for all distinct }v,w\in \ints{n}\,,
\end{equation}
satisfies $p(Q_{\sptOp}(n))=\PSpt{n}$, where~$p:\RR^d\rightarrow\RR^E_n$ is the orthogonal projection onto the $x$-variables.  This follows  from observing that, for each spanning tree $T\subseteq E_n$,  the vector $y^{(T)}=(x^{(T)},z^{(T)})$ satisfies~\eqref{eq:ForEF1} and~\eqref{eq:ForEF2}, and on the other hand, every nonnegative vector $y=(x,z)\in\RR_+^d$ satisfying~\eqref{eq:ForEF1} and~\eqref{eq:ForEF2} also satisfies 
$x(E(S))\le |S|-1$ for all $S\subseteq\ints{n}$ with $|S|\ge 2$. Indeed, to see the latter claim, one adds  equations~\eqref{eq:ForEF1} for all pairwise distinct $v,w,u\in S$ in order to obtain (after division by two and renaming summation indices)
\begin{equation}\label{eq:sptPr:1}
	(|S|-2)x(E(S))=\sum_{v,w\in S,v\ne w}\sum_{u\in S\setminus\{v,w\}}z_{\{v,u\},u,w}\,,
\end{equation}
where, due to~\eqref{eq:ForEF2} and $z\ge\zeroVec{}$, the right-hand side is bounded from above by
\begin{equation*}
	\sum_{v,w\in S,v\ne w}(1-x_{\{v,w\}})=|S|(|S|-1)-2x(E(S))\,,
\end{equation*}
which together with~\eqref{eq:sptPr:1} implies $x(E(S))\le |S|-1$.

For many other polytopes (with exponentially many facets) associated with polynomial time solvable combinatorial optimization problems polynomially sized extended formulations can be constructed as well (see, e.g., the recent survey~\cite{CCZ10}). Probably the most prominent problem in this class for which, however, no such  small formulation is known, is the matching problem. In fact, Yannakakis~\cite{Yan91} proved that no \emph{symmetric} polynomially sized extended formulation of the matching polytope exists. 

Here, \emph{symmetric} refers to the symmetric group $\symGr{n}$ of all permutations $\pi:\ints{n}\rightarrow\ints{n}$ of the node set~$\ints{n}$ of~$K_n$ acting on~$E_n$ via\footnote{For an action $G\times E\rightarrow E$ of a group~$G$ on a set~$E$ we use the notation $g.e$ for the image of $(g,e)$ under the action, since it makes many formulas easier to read.}
 $\pi.\{v,w\}=\{\pi(v),\pi(w)\}$
 for all $\pi\in\symGr{n}$ and $\{v,w\}\in E_n$. 
Clearly, this action of~$\symGr{n}$ on~$E_n$  induces an action on the set of all subsets of~$E_n$.  
For instance, this yields an action on the spanning trees of~$K_n$, and thus, on (the vertices of) $\PSpt{n}$. The extended formulation of~$\PSpt{n}$ discussed above is \emph{symmetric} in the sense that, for every~$\pi\in\symGr{n}$, replacing all indices associated with edges~$e\in E_n$ and nodes~$v\in \ints{n}$ by~$\pi.e$ and~$\pi.v$, respectively, does not change the set of constraints in the formulation. Phrased informally, all subsets of nodes of~$K_n$ of equal cardinality  play the same role in the formulation. For a general definition of symmetric extended formulations see Section~\ref{sec:symext}. 

In order to describe the main results of Yannakakis' paper~\cite{Yan91} and the contributions of the present paper, let us denote by 
\begin{equation*}
	\match{\ell}{n}=\setDef{M\subseteq E_n}{M\text{ matching in }K_n, |M|=\ell}
\end{equation*}
the set of all matchings of size~$\ell$ (a matching being a subset of edges no two of which share a node), and by
\begin{equation*}
	\PMatch{\ell}{n}=\conv\setDef{\charVec{M}\in\{0,1\}^{E_n}}{M\in\match{\ell}{n}}
\end{equation*}
the associated polytope. According to Edmonds~\cite{Edm65b} the \emph{perfect matching polytope} $\PMatch{n/2}{n}$ (for even~$n$) is described by
\begin{multline}\label{eq:PMPoly}
	\PMatch{n/2}{n}=\{x\in\RR_+^{E_n}\,:\,x(\delta(v))=1\text{ for all }v\in\ints{n},\\
	                  x(\delta(S))\ge 1\text{ for all }S\subseteq \ints{n},3\le|S|\text{ odd}\}
\end{multline}
(with $\delta(S)=\setDef{e\in E_n}{|e\cap S|=1}$ and $\delta(v)=\delta(\{v\})$). Yannakakis~\cite[Thm.1 and its proof]{Yan91} shows that there is a constant~$C>0$ such that, for every extended formulation  for~$\PMatch{n/2}{n}$ (with~$n$ even) that is symmetric in the sense above, the number of variables and constraints is at least $C\cdot \binom{n}{\lfloor n/4\rfloor}=2^{\Omega(n)}$. This in particular implies that there is no polynomial size symmetric extended formulation for the \emph{matching polytope} of~$K_n$ (the convex hulls of characteristic vectors of \emph{all} matchings in~$K_n$) of which the \emph{perfect} matching polytope is a face. 

Yannakakis~\cite{Yan91} moreover obtains a similar (maybe less surprising) result for traveling salesman polytopes. Denoting by
\begin{equation*}
	\cycl{\ell}{n}=\setDef{C\subseteq E_n}{C\text{ cycle in }K_n, |C|=\ell}
\end{equation*}
the set of all (simple) cycles of length~$\ell$ in~$K_n$, and by
\begin{equation*}
	\PCycl{\ell}{n}=\conv\setDef{\charVec{C}\in\{0,1\}^{E_n}}{C\in\cycl{\ell}{n}}
\end{equation*}
the associated polytopes, the \emph{traveling salesman polytope} is $\PCycl{n}{n}$. Suitably identifying~$\PMatch{n/2}{n}$ (for even~$n$) with a  face of~$\PCycl{3n}{3n}$, Yannakakis concludes that all symmetric extended formulations for~$\PCycl{n}{n}$ have size at least~$2^{\Omega(n)}$ as well~\cite[Thm.~2 and its proof]{Yan91}. 

Yannakakis' results in a fascinating way illuminate the  borders of our principal abilities to express combinatorial optimization problems like the matching or the traveling salesman problem by means of linear constraints. However, they only refer to linear descriptions that respect the inherent symmetries in the problems. In fact, the second open problem mentioned in the concluding section of~\cite{Yan91} is described as follows: ``We do not think that asymmetry helps much. Thus, prove that the matching and TSP polytopes cannot be expressed by polynomial size LP's without the asymmetry assumption.'' 

\volker{%
Indeed, it was shown very recently (and while this paper was under review) that the \emph{traveling salesman polytope} does also not possess any non-symmetric compact extended formulation~\cite{Fio2011}. The correpsonding question concerning the \emph{matching polytope}, however, still remains open.

}

\medskip
The  contribution of our paper is to show that, in contrast to the assumption expressed in the quotation above, asymmetry can help much, or, phrased differently, that symmetry requirements on extended formulations  indeed can  matter significantly with respect to the minimal sizes of extended formulations. Our main results are that both $\PMatch{\lfloor\log n\rfloor}{n}$ and $\PCycl{\lfloor\log n\rfloor}{n}$ do not admit symmetric extended formulations of polynomial size, while they have non-symmetric extended formulations of polynomial size (see Cor.~\ref{cor:noCompactSymMatch} and~\ref{cor:CompactMatch} for matchings, as well as Cor.~\ref{cor:noCompactSymCycl} and~\ref{cor:CompactCycl} for cycles). The corresponding theorems from which these corollaries are derived provide some more general and more precise results for~$\PMatch{\ell}{n}$ and~$\PCycl{\ell}{n}$. In order to establish the lower bounds for symmetric extensions, we adapt 
the techniques developed by Yannakakis~\cite{Yan91}. 
The constructions of the compact non-symmetric extended formulations rely on small families of perfect hash functions~\cite{AYZ95,FKS84,SS90}.

The paper is organized as follows. In Section~\ref{sec:symext}, we provide  definitions of extensions, extended formulations, their sizes, the crucial notion of a section of an extension, symmetry of an extension, and we give some auxilliary results. 
In Section~\ref{sec:match} we  derive (using ideas from~\cite{Yan91}) lower bounds on the sizes of symmetric extended formulations for the polytopes~$\PMatch{\ell}{n}$ associated with cardinality restricted matchings.
 In Section~\ref{sec:matchPoly}, we then describe our non-symmetric extended formulations for these polytopes. Finally, in Section~\ref{sec:cyclePoly}  we present the results on $\PCycl{\ell}{n}$. Some remarks conclude the paper in Section~\ref{sec:rem}.

An extended abstract~\cite{KPT10} of this work has appeared in the proceedings of IPCO XIV. The present paper does not only  contain additional proofs that have been omitted in~\cite{KPT10} and a simplified and, as we believe, clearer presentation of the proof of Theorem~\ref{thm:lowerbound}, but it also slightly sharpens our lower bound results in two ways (based on the new Lemma~\ref{lem:subspaceExt}): We now prove lower bounds on the mere number of inequalities (rather than on the number of inequalities plus the number of variables) of symmetric extended formulations, and  these results now refer to the more general notion of symmetry obtained from considering arbitrary isometries instead of coordinate permutations only.

\paragraph{Acknowledgements.} 
\volker{%
We thank Christian Bey for   discussions on invariant subspaces 
and the referees for their comments, in particular for pointing out the simple probabilistic argument for the existence of small families of perfect hash functions (Theorem~\ref{thm:AYZ}).
}


\section{Extended Formulations, Extensions, and Symmetry}
\label{sec:symext}

\volker{%
Here, we formalize the central notions used in this paper and establish some basic results we will rely on later.
}

\begin{definition}
 An \emph{extension} of a polytope $P\subseteq\RR^m$ is a polyhedron $Q\subseteq\RR^d$ together with  an affine
 projection $p:\RR^d\rightarrow\RR^m$ with $p(Q)=P$. The \emph{size} of an extension is the number of its facets.
\end{definition}
\begin{definition}
 An \emph{extension} $Q\subseteq\RR^d$, $p:\RR^d\rightarrow\RR^m$ of a polytope $P\subseteq\RR^m$ is called a \emph{subspace extension} if~$Q$ is the intersection of an affine subspace of~$\RR^d$ and the nonnegative orthant $\RR_+^d$. 
\end{definition}

For instance, the polyhedron~$Q_{\sptOp}(n)$ defined in the Introduction is a subspace extension of the spanning tree polytope~$\PSpt{n}$. 

\begin{definition}
 A (finite) system of linear equations and inequalities whose 
\volker{%
set of solutions (together with some projection) forms 
an extension of~$P$
}  
is an \emph{extended formulation} for~$P$. 
The \emph{size} 
of an extended formulation is its number of inequalities (including nonnegativity constraints, but not equations). 
\end{definition}

Clearly, the size of an extended formulation is at least as large as the size of the extension it describes. Conversely,  every extension  is described by an extended formulation of at most its size. 

\old{
Note that we do not consider the number of equations (which can be assumed to be bounded by the dimension of the ambient space) in the definition of the \emph{size} of an extension or extended formulations. Actually, it would  be more elegant to drop the number of variables from the definition as well. In fact, for pointed extensions the number of variables of course is bounded from above by the number of facets, and it is not hard to see that from a possibly non-pointed extension of a polytope one can derive a pointed one with the same number of facets. However, it seems unclear whether one can do this whithout destroying symmetry of the extension (see also Section~\ref{sec:rem}). Therefore, as the lower bounds that we provide (the same applies to Yannakakis' bounds) also refer to the ambient dimension of the symmetric extensions, we stick to the definition of size as the sum of both the number of facets and the ambient dimension. We did not include the encoding lengths of the coefficients involved into the extensions into the definition of the size, since the lower bounds that we present do not refer to them. Instead, whenever we provide upper bounds, we explicitly state that the coefficients that are involved are bounded by a constant.  
}

Extensions or extended formulations of a family of polytopes $P\subseteq\RR^m$ (for varying~$m$) are \emph{compact} if their sizes and the encoding lengths of the coefficients needed to describe them can be bounded by a polynomial in~$m$ and the maximal encoding length of all components of all vertices of~$P$. Clearly, the extension~$Q_{\sptOp}(n)$ of~$\PSpt{n}$ from the Introduction is compact.


\begin{definition}
 For an \emph{extension} $Q\subseteq\RR^d$, $p:\RR^d\rightarrow\RR^m$ of a polytope $P\subseteq\RR^m$, the \emph{fiber} of $x\in P$ is the set $p^{-1}(x)=\setDef{y\in\RR^d}{p(y)=x}$.
\end{definition}

\begin{definition}
 For an \emph{extension} $Q\subseteq\RR^d$, $p:\RR^d\rightarrow\RR^m$ of a polytope $P\subseteq\RR^m$, a \emph{section} 
 $s:X\rightarrow Q$ is a map  that assigns to every vertex~$x$ of $P$  some point $s(x)\in Q\cap p^{-1}(x)$ in the intersection of  the extension~$Q$ and the fiber $p^{-1}(x)$.
\end{definition}

Such a section induces a bijection between~$X$ and its image $s(X)\subseteq Q$, whose inverse is given by~$p$. In the spanning tree example from the Introduction, the assignment  $\chi(T)\mapsto y^{(T)}=(x^{(T)},z^{(T)})$ defined such a section. 
Note that, in general,  sections will not be induced by affine maps. In fact, if  a section is induced by an affine map $s:\RR^{m}\rightarrow\RR^d$, then the intersection of~$Q$ with the affine subspace of~$\RR^d$ generated by~$s(X)$ is isomorphic to~$P$, thus~$Q$ has at least as many facets as~$P$.

If~$s:X\rightarrow\ Q$ is a section for some extended formulation of~$P$ then, for each inequality $\scalProd{c}{y}\le\gamma$ in the formulation, we call the vector in~$\RR_+^X$ with entries $\gamma-\scalProd{c}{s(x)}$ ($x\in X$) a \emph{section slack covector}. Similarly,  any valid inequality $\scalProd{a}{x}\le\beta$  for~$P$ defines a \emph{slack covector}  in $\RR_+^X$ with entries $\beta-\scalProd{a}{x}$ ($x\in X$). One finds that every slack covector is a conic combination of the section slack covectors~ \cite[Cor.~2.5]{FKPT11}. In particular, we derive the following proposition (via the trivial direction of the Farkas-Lemma), which  follows from the fact that every subspace extension can be described by a linear system containing linear equations and non-negativity constraints only, implying that
in this case the coordinates of the section slack covectors simply correspond to the values of the section coordinate functions.

\begin{proposition}\label{prop:lambda}
	If $s:X\rightarrow Q$ is a section for a subspace extension $Q\subseteq\RR_+^d$ of $P=\conv(X)$ and $\scalProd{a}{x}\le\beta$ is valid for~$P$ then the system
	\begin{eqnarray}
		\sum_{x\in X} s_j(x)\cdot\lambda_x & \ge &  0 \quad\text{for all }j\in\ints{d}\label{eq:lambda:1}\\
		\sum_{x\in X} (\beta-\scalProd{a}{x})\cdot\lambda_x & = &  -1 \label{eq:lambda:2}
	\end{eqnarray}
	does not have any solution $\lambda\in\RR^X$.
\end{proposition}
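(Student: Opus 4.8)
The plan is to unwind the definitions and reduce everything to an application of the fact already quoted from \cite{FKPT11}, namely that every slack covector of $P$ is a conic combination of section slack covectors. First I would observe that since $Q$ is a \emph{subspace} extension, it is cut out of $\RR^d_+$ by an affine subspace, so any extended formulation describing $Q$ in this form consists only of equations together with the nonnegativity constraints $y_j \ge 0$, $j \in \ints{d}$. Hence the only inequalities in the formulation are the $d$ nonnegativities, and the section slack covector associated with the inequality $y_j \ge 0$ has entries $s_j(x)$ for $x \in X$; that is, the section slack covectors are precisely the coordinate functions $s_j$ restricted to $X$.

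Next I would invoke the cited result: the slack covector of the valid inequality $\scalProd{a}{x} \le \beta$, whose $x$-entry is $\beta - \scalProd{a}{x}$, is a conic combination of these section slack covectors. Concretely, there exist nonnegative reals $\mu_1,\dots,\mu_d \ge 0$ with
\begin{equation*}
	\beta - \scalProd{a}{x} \;=\; \sum_{j=1}^d \mu_j\, s_j(x) \qquad\text{for all } x \in X.
\end{equation*}
This is exactly the ``trivial direction of Farkas'' situation alluded to in the text: the existence of such a $\mu$ certifies the \emph{in}feasibility of the dual system \eqref{eq:lambda:1}--\eqref{eq:lambda:2}.

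To finish, I would argue by contradiction. Suppose $\lambda \in \RR^X$ solves \eqref{eq:lambda:1}--\eqref{eq:lambda:2}. Multiply the $j$-th inequality of \eqref{eq:lambda:1}, namely $\sum_{x} s_j(x)\lambda_x \ge 0$, by $\mu_j \ge 0$ and sum over $j \in \ints{d}$; this gives $\sum_j \mu_j \sum_x s_j(x)\lambda_x \ge 0$. Interchanging the order of summation and using the conic-combination identity above, the left-hand side equals $\sum_x \big(\sum_j \mu_j s_j(x)\big)\lambda_x = \sum_x (\beta - \scalProd{a}{x})\lambda_x$, which by \eqref{eq:lambda:2} equals $-1$. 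Thus $-1 \ge 0$, a contradiction, so no such $\lambda$ exists.

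I do not anticipate a serious obstacle here: the only point needing a little care is making sure the reduction to ``equations plus nonnegativities only'' is legitimate for a subspace extension, and that the section slack covectors in that representation really are the coordinate functions $s_j$ — both of which are already spelled out in the paragraph preceding the proposition and in \cite[Cor.~2.5]{FKPT11}. So the proof is essentially the one-line observation that \eqref{eq:lambda:1}--\eqref{eq:lambda:2} is the Farkas-dual of the (known to be solvable) system expressing $\beta - \scalProd{a}{\cdot}$ as a nonnegative combination of the $s_j$, and the weak-duality computation above makes this explicit without needing the nontrivial direction of Farkas' lemma.
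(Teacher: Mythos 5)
Your proof is correct and follows essentially the same route as the paper: the preceding paragraph there derives the proposition exactly as you do, by noting that a subspace extension is described by equations plus nonnegativities (so the section slack covectors are the coordinate functions $s_j$), invoking \cite[Cor.~2.5]{FKPT11} to write $\beta-\scalProd{a}{x}$ as a conic combination of the $s_j$, and concluding via the trivial direction of the Farkas lemma. Your explicit weak-duality computation just spells out what the paper leaves implicit.
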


\volker{%
Aiming to prove 
}
 that a (subspace) extension of a certain type does not exist one thus can first construct some appropriate section for such an extension for which one then exhibits, for some  inequality valid for~$P$,  a solution to~\eqref{eq:lambda:1}, \eqref{eq:lambda:2}.

\vold{
For a family $\mathcal{F}$ of subsets of~$X$, an extension~$Q\subseteq\RR^d$ is said to be \emph{indexed by~$\mathcal{F}$} if there is a bijection between~$\mathcal{F}$ and $\ints{d}$ such that (identifying $\RR^{\mathcal{F}}$ with $\RR^d$ via this bijection) the map $\charFct{\mathcal{F}}=(\charFct{F})_{F\in\mathcal{F}}:X\rightarrow\{0,1\}^{\mathcal{F}}$ whose component functions are the characteristic functions 
$\charFct{F}:X\rightarrow\{0,1\}$
 (with $\charFct{F}(x)=1$ if and only if $x\in F$), is a section for the extension, i.e., $\charFct{\mathcal{F}}(X)\subseteq Q$ and $p(\charFct{\mathcal{F}}(x))=x$ hold for all $x\in X$. 

For instance, the extension~$Q_{\sptOp}(n)$ of~$\PSpt{n}$  is indexed by the family 
\begin{equation*}
	\setDef{\mathcal{T}(e)}{e\in E_n}\cup\setDef{\mathcal{T}(e,v,u)}{e\in E_n, v\in e,u\in\ints{n}\setminus e}\,,
\end{equation*}
where $\mathcal{T}(e)$ contains all spanning trees using edge~$e$, and $\mathcal{T}(e,v,u)$ consists of all spanning trees in~$\mathcal{T}(e)$  for which~$u$ and~$v$  are in the same component of~$T\setminus\{e\}$.  

}

\old{
In order to define the notion of symmetry of an extension precisely, let 
the group~$\symGr{d}$ of all permutations of~$\ints{d}=\{1,\dots,d\}$ act on~$\RR^d$ by coordinate permutations. 
Thus we have $(\sigma.y)_j=y_{\sigma^{-1}(j)}$  for all $y\in\RR^d$, $\sigma\in\symGr{d}$, and $j\in\ints{d}$.

Let~$P\subseteq\RR^m$ be a polytope with vertex set $X\subseteq\RR^n$, and let~$G$ be a group acting on~$\RR^m$ with~$\pi.X=X$ for all~$\pi\in G$, i.e., 
\marginpar{Removed discussion of why the group is acting on~$\RR^m$}
the action of~$G$ on~$\RR^m$ induces an action of~$G$ on the set of vertices of~$P$.  
We do not restrict the notion of symmetry to actions of~$G$ on~$\RR^m$ that are linear in the sense that $x\mapsto \pi.x$ is a linear map for each~$\pi\in G$. Thus, every group action on~$X$ can trivially be extended to a group action on~$\RR^m$. 
 An extension $Q\subseteq\RR^d$ of~$P$ with projection $p:\RR^d\rightarrow\RR^m$  is \emph{symmetric} (with respect to the action of~$G$), if for every $\pi\in G$ there is a permutation $\upop_{\pi}\in\symGr{d}$
 with $\upop_{\pi}.Q = Q$
and
\begin{equation}\label{pGmap}
	p(\upop_{\pi}.y) = \pi.p(y) \quad\text{for all $y\in \RR^d$}
\end{equation}
(see Fig.~\ref{fig:diagrams}). 
}

\new{
 In order to define the notion of symmetry of an extension precisely, we will deal with groups of affine transformations $\pi:\RR^m\rightarrow\RR^m$. We will frequently use the notation $\pi.x=\pi(x)$ for $x\in \RR^m$ and $\pi.S=\pi(S)$ for $S\subseteq\RR^m$. 
Let us denote by $\isoms{d}$ the group of all affine isometries of $\RR^d$, i.e., the set of all maps $\upop:\RR^d\rightarrow\RR^d$ of the form $\upop(y)=Uy+u$ with an \emph{orthogonal} matrix $U\in\RR^{d\times d}$ (i.e., $U\transpose{U}=\ident{}$) and $u\in\RR^d$. 
The group~$\symGr{d}$ of all bijective maps from $\ints{d}=\{1,\dots,d\}$ to itself acts on~$\RR^d$ by coordinate permutations via
 $(\sigma.y)_j=y_{\sigma^{-1}(j)}$  for all $y\in\RR^d$, $\sigma\in\symGr{d}$, and $j\in\ints{d}$. 
Identifying 
  $\sigma\in\symGr{d}$ with the isometry defined via $y\mapsto\sigma.y$, we consider $\symGr{d}$ as a subgroup of $\isoms{d}$.

Suppose that~$P\subseteq\RR^m$ is a polytope with vertex set $X\subseteq\RR^m$, and~$G$ is a group of affine transformations $\pi:\RR^m\rightarrow\RR^m$ with $\pi.P=P$. Clearly, every $\pi\in G$ permutes the vertices of~$P$. 
Usually, $G$ will  be a subgroup of coordinate permutations of~$\RR^m$, i.e., a subgroup of $\symGr{m}$.

\begin{definition}
  An extension $Q\subseteq\RR^d$ of~$P$ with projection $p:\RR^d\rightarrow\RR^m$  is \emph{isometry-symmetric} with respect to~$G$
\volker{%
(or, for short, \emph{symmetric})%
}%
, if for every $\pi\in G$ there is an isometry $\upop_{\pi}\in\isoms{d}$
 with $\upop_{\pi}.Q = Q$
and
\begin{equation}\label{pGmap}
	p(\upop_{\pi}.y) = \pi.p(y) \quad\text{for all $y\in Q$}
\end{equation}
(see Fig.~\ref{fig:diagrams}); the extension is called \emph{coordinate-symmetric} if all these $\upop_{\pi}$ can be chosen to be from~$\symGr{d}$. 
\end{definition}

}
\begin{figure}[ht]
	\begin{tikzpicture}
		\matrix (m) [matrix of math nodes, row sep=3em, column sep=2.5em, text height=1.5ex, text depth=0.25ex] 
			{ Q &  Q \\
  			  P &  P \\ };
 		\path[->,font=\scriptsize] (m-1-1) edge node[auto] {$ \upop_{\pi} $} (m-1-2);
		\path[->,font=\scriptsize] (m-1-2) edge node[auto] {$ p $}           (m-2-2);
		\path[->,font=\scriptsize] (m-1-1) edge node[auto] {$ p $}           (m-2-1);
		\path[->,font=\scriptsize] (m-2-1) edge node[auto] {$ \pi $}         (m-2-2);
	\end{tikzpicture}
	\hspace{1cm}
	\begin{tikzpicture}
		\matrix (m) [matrix of math nodes, row sep=3em, column sep=2.5em, text height=1.5ex, text depth=0.25ex] 
			{ Q     &  Q \\
  			  X     &  X     \\ };
 		\path[->,font=\scriptsize] (m-1-1) edge node[auto] {$ \upop_{\pi} $} (m-1-2);
		\path[<-,font=\scriptsize] (m-1-2) edge node[auto] {$ s $}           (m-2-2);
		\path[<-,font=\scriptsize] (m-1-1) edge node[auto] {$ s $}           (m-2-1);
		\path[->,font=\scriptsize] (m-2-1) edge node[auto] {$ \pi $}         (m-2-2);
	\end{tikzpicture}
	\caption{Relations~\eqref{pGmap} (left) and~\eqref{weakSym} (right) from the definitions of \emph{symmetry} and \emph{weak coordinate-symmetry}, respectively.}
	\label{fig:diagrams}
\end{figure}


The prime examples of symmetric extensions arise from  extended formulations that ``look symmetric''.  

\begin{definition}
 An extended formulation $A^=y=b^=$, $A^{\le}y\le b^{\le}$ describing the polyhedron 
\begin{equation*}
	Q=\setDef{y\in\RR^d}{A^=y=b^=, A^{\le}y\le b^{\le}}
\end{equation*}
extending~$P\subseteq\RR^m$ as above is \emph{symmetric} (with respect to the action of~$G$ on~$P$), if for every~$\pi\in G$ there is 
\volker{%
some
}
 $\upop_{\pi}\in\symGr{d}$
 satisfying~\eqref{pGmap} and there are two permutations~$\varrho^=_{\pi}$ and~$\varrho^{\le}_{\pi}$ of the rows of $(A^=,b^=)$ and $(A^{\le}, b^{\le})$, respectively, such that the corresponding simultaneous  permutations of the columns and the rows  of the matrices $(A^=,b^=)$ and $(A^{\le}, b^{\le})$ leave them unchanged. 
\end{definition}

Clearly, in this situation the permutations~$\upop_{\pi}$ satisfy $\upop_{\pi}.Q= Q$, which implies the following.

\begin{lemma}\label{lem:symExtFormToSymExt}
	Every symmetric extended formulation defines a (coordinate-)symmetric extension.
\end{lemma}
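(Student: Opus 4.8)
The statement to prove is Lemma~\ref{lem:symExtFormToSymExt}: every symmetric extended formulation defines a (coordinate-)symmetric extension. The proof is essentially an unwinding of definitions, so I would proceed as follows. Let $A^=y=b^=$, $A^{\le}y\le b^{\le}$ be a symmetric extended formulation for $P\subseteq\RR^m$ describing $Q\subseteq\RR^d$ with projection $p$, and fix $\pi\in G$. By definition of symmetry of the extended formulation, there is a coordinate permutation $\upop_{\pi}\in\symGr{d}$ satisfying the projection-compatibility relation~\eqref{pGmap}, together with row permutations $\varrho^=_{\pi}$, $\varrho^{\le}_{\pi}$ of $(A^=,b^=)$ and $(A^{\le},b^{\le})$ such that performing $\upop_{\pi}$ on the columns (i.e.\ on the variables) and $\varrho^=_{\pi}$, resp.\ $\varrho^{\le}_{\pi}$, on the rows leaves the two matrices unchanged.

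\textbf{Key step.} The one thing that needs an argument is $\upop_{\pi}.Q=Q$, i.e.\ that $\upop_{\pi}$ maps the solution set of the system onto itself; everything else is then immediate. First observe that since $\upop_{\pi}\in\symGr{d}$ is a coordinate permutation, it acts linearly on $\RR^d$; write it as $y\mapsto \Pi_{\pi}y$ for the corresponding permutation matrix. The row–column invariance of $(A^{\le},b^{\le})$ says precisely that $R^{\le}_{\pi} A^{\le}\Pi_{\pi}^{-1}=A^{\le}$ and $R^{\le}_{\pi}b^{\le}=b^{\le}$, where $R^{\le}_{\pi}$ is the permutation matrix of $\varrho^{\le}_{\pi}$; hence for any $y$ we have $A^{\le}(\Pi_{\pi}y)\le b^{\le}$ iff $R^{\le}_{\pi}A^{\le}\Pi_{\pi}^{-1}(\Pi_{\pi}y) = R^{\le}_\pi A^{\le} y \le R^{\le}_{\pi} b^{\le}$, i.e.\ iff $A^{\le}y\le b^{\le}$ (a permutation of the coordinates of a vector inequality preserves it). The same computation with $R^=_{\pi}$ and equality in place of inequality handles the equation block. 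Therefore $y\in Q\iff \upop_{\pi}.y\in Q$, so $\upop_{\pi}.Q=Q$. Since a coordinate permutation is in particular an affine isometry, and~\eqref{pGmap} holds by hypothesis, the pair $(Q,p)$ together with the family $(\upop_{\pi})_{\pi\in G}\subseteq\symGr{d}$ witnesses that $Q$ is a coordinate-symmetric (and a fortiori isometry-symmetric) extension of $P$.

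\textbf{Obstacle.} There is essentially no hard part here; the only place where a touch of care is needed is the bookkeeping that ``simultaneous permutation of rows and columns leaves the matrix $(A,b)$ unchanged'' translates into the matrix identities $R_\pi A \Pi_\pi^{-1}=A$, $R_\pi b=b$ with the correct placement of $\Pi_\pi$ versus $\Pi_\pi^{-1}$ (note that $b$, being a column of constants indexed by rows, is only permuted by $R_\pi$, not by $\Pi_\pi$). Once the conventions are pinned down, the membership equivalence $y\in Q\iff\upop_\pi.y\in Q$ and hence $\upop_\pi.Q=Q$ follow in one line, and the lemma is proved. I would keep the write-up to a short paragraph, remarking that this is exactly the content already announced in the sentence preceding the lemma (``Clearly, in this situation the permutations~$\upop_{\pi}$ satisfy $\upop_{\pi}.Q=Q$'').
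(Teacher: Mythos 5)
Your proposal is correct and follows exactly the route the paper takes: the paper's entire argument is the remark preceding the lemma that the row--column invariance forces $\upop_{\pi}.Q=Q$, and your matrix computation ($R_{\pi}A\Pi_{\pi}^{-1}=A$, $R_{\pi}b=b$, hence $A(\Pi_{\pi}y)\le b\iff Ay\le b$) is just the careful unwinding of that remark. Nothing is missing; the lemma is indeed definitional once $\upop_{\pi}.Q=Q$ is observed, with~\eqref{pGmap} holding by hypothesis.
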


One example of a symmetric extended formulation is the extended formulation for the spanning tree polytope described in the Introduction  (with respect to the group~$G$ of all permutations of the nodes of the complete graph).

\new{
The following lemma shows that we can restrict our attention to coordinate-symmetric subspace extensions when searching for the minimum size of any (iso\-metry-)symmetric extension of a given polytope~$P$. 
\volker{%
In particular, the minimum size of a symmetric extension of a polytope is attained by a \emph{pointed} symmetric extension. 
}
 
\begin{lemma}\label{lem:subspaceExt}
	If a polytope has an isometry-symmetric extension of size~$f$, then it has also a coordinate-symmetric subspace extension of size~$f$.
\end{lemma}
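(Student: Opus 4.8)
The plan is to start from an isometry-symmetric extension $Q\subseteq\RR^d$, $p:\RR^d\to\RR^m$ of $P$ with $f$ facets, and transform it in three stages, each of which preserves the number of facets and does not destroy symmetry.

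\emph{Stage 1: make the extension pointed.} An extension need not be pointed, but we can quotient out its lineality space. If $L=\lineal(Q)$ is the lineality space of $Q$, then the projection $\RR^d\to\RR^d/L$ sends $Q$ to a pointed polyhedron $\bar Q$ with the same face lattice, hence the same number $f$ of facets, and $p$ factors through this quotient because $p$ must be constant on the lineality directions of $Q$ (otherwise $p(Q)=P$ would be unbounded). The key point is that the isometries $\upop_\pi$ with $\upop_\pi.Q=Q$ necessarily satisfy $\upop_\pi.L=L$ (they permute the faces, so they fix the largest affine subspace contained in $Q$), so each $\upop_\pi$ descends to an affine map of $\RR^d/L$; after choosing an orthonormal basis of $L^\perp$ to identify $\RR^d/L$ with $L^\perp\cong\RR^{d'}$, the descended maps are again isometries. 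So without loss of generality $Q$ is pointed, i.e.\ has a vertex.

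\emph{Stage 2: embed $Q$ into the nonnegative orthant as a subspace extension.} Since $Q$ is a pointed polyhedron with $f$ facets, write $Q=\setDef{y\in\RR^{d'}}{\scalProd{c_i}{y}\le\gamma_i,\ i\in\ints{f}}$ using exactly its facet inequalities. Define $\Phi:\RR^{d'}\to\RR^f$ by $\Phi(y)_i=\gamma_i-\scalProd{c_i}{y}$. Then $\Phi$ is an affine injection (its linear part has rank $d'$ because $Q$ is full-dimensional after Stage 1, or can be made so), and $\Phi(Q)=\Phi(\RR^{d'})\cap\RR_+^f$, which is the intersection of an affine subspace with the nonnegative orthant — a subspace extension — with projection $p\circ\Phi^{-1}$ (defined on the affine hull of $\Phi(Q)$, extended arbitrarily affinely), and still exactly $f$ facets. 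Now I must transport the symmetry: given $\pi\in G$ with isometry $\upop_\pi$ of $\RR^{d'}$ satisfying $\upop_\pi.Q=Q$, the map $\upop_\pi$ permutes the facet inequalities, i.e.\ there is a permutation $\varrho_\pi\in\symGr{f}$ and positive scalars with $\gamma_{\varrho_\pi(i)}-\scalProd{c_{\varrho_\pi(i)}}{\upop_\pi y}=\lambda_{\pi,i}\bigl(\gamma_i-\scalProd{c_i}{y}\bigr)$ for all $y$. The scalars $\lambda_{\pi,i}$ are where the subtlety lies: to get a genuine \emph{coordinate} permutation of $\RR^f$ we need $\lambda_{\pi,i}=1$. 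This holds if we normalize the facet inequalities so that the corresponding covectors $(c_i,\gamma_i)$ all have the same ``size'' in a way the isometry respects — concretely, since $\upop_\pi$ is an isometry, $\|c_{\varrho_\pi(i)}\|=\lambda_{\pi,i}\|c_i\|$, so if we scale each defining inequality to make $\|c_i\|=1$ then $\lambda_{\pi,i}=1$ automatically, and $\Phi$ intertwines $\upop_\pi$ with the coordinate permutation $\varrho_\pi$. This turns the isometry-symmetry into coordinate-symmetry of the subspace extension $\Phi(Q)$.

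\emph{Main obstacle.} The delicate point is Stage 2's normalization argument: one must check that scaling facet inequalities to unit normal length is both possible (no facet has zero normal — true) and genuinely forces $\lambda_{\pi,i}=1$, using that $\upop_\pi$ is an \emph{isometry} rather than an arbitrary affine automorphism of $Q$ — this is exactly why the lemma is stated for isometry-symmetry and why passing through isometries (rather than the older coordinate-permutation-only notion) is the right generality. A secondary point to verify carefully is that in Stage 1 the projected map $p$ on $\RR^{d'}/L$ is still affine and still surjects onto $P$, and that identifying the quotient with $L^\perp$ via an orthonormal basis keeps the descended $\upop_\pi$ isometric — both are routine but should be stated. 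Once these are in place, composing the three stages yields a coordinate-symmetric subspace extension of $P$ with exactly $f$ facets, which is the claim. $\qed$
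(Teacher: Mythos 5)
Your proposal is correct and follows essentially the same route as the paper's proof: pass to the slack representation with respect to the unit-normalized facet description, and use that an orthogonal map sends unit facet normals to unit facet normals, so by uniqueness of the normalized irredundant description each $\upop_\pi$ induces a genuine coordinate permutation of $\RR^f$. The only organizational difference is that you quotient out the lineality space first, whereas the paper keeps $Q$ intact and handles $\lineal{Q}$ inside the slack-map argument (using that $p$ is constant along lineality directions); both treatments are fine.
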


\begin{proof}
	Let $P\subseteq\RR^m$ be some polytope,
	 $Q\subseteq\RR^d$ some polyhedron with~$f$ facets, $p:\RR^d\rightarrow\RR^m$ some affine projection with $p(Q)=P$, and~$G$ some group acting on~$P$ and the set~$X$ of vertices of~$P$ such that, for every~$\pi\in G$, there is some $\upop_{\pi}\in\isoms{d}$ with $\upop_{\pi}.Q=Q$ and $p(\upop_{\pi}.y)=\pi.p(y)$ for all $y\in Q$. 
	
	We denote the 
	 affine hull of~$Q$ by~$\aff(Q)$ and by $L\subseteq\RR^d$  the linear subspace parallel to~$\aff(Q)$. The polyhedron~$Q$ has a unique (up to reordering of the inequalities) description 
	\begin{equation}\label{eq:eq:subspaceExt:uniqueRep}
		Q=\setDef{y\in\RR^d}{y\in\aff(Q),Ay\le b}
	\end{equation}
	with $A\in\RR^{f\times d}$ and $b\in\RR^f$,
	if we require, for each $i\in\ints{f}$, that $\row{A}{i}\in L$ and $\|\row{A}{i}\|=1$ hold for the $i$-th row $\row{A}{i}$ of~$A$. 
	 We define the affine map $\slackMap{Q}:\RR^d\rightarrow\RR^f$ (the \emph{slack map of~$Q$}) via $\slackMap{Q}(y)=b-Ay$ and call the image $\slackRep{Q}=\slackMap{Q}(Q)$ of~$Q$ under its slack map its \emph{slack representation}. Note that~$\slackRep{Q}$ is the intersection of the nonnegative orthant~$\RR_+^f$ with the affine subspace $\slackMap{Q}(\aff(Q))$. 
	
	The \emph{lineality space} $\lineal{Q}=L\cap\ker(A)$ of~$Q$ is the space of all directions of lines contained in~$Q$. As $P=p(Q)$ is a polytope (thus bounded), we find
	\begin{equation}\label{eq:subspaceExt:plinealQ}
		p(y+r)=p(y)
		\quad
		\text{for all }y\in Q, r\in\lineal{Q}\,.
	\end{equation}		
	The restriction $\slackMapRestr{Q}$  of~$\slackMap{Q}$ to the intersection of $\aff(Q)$ with the orthogonal complement $\lineal{Q}^{\perp}$ of the lineality space of~$Q$  is a bijection
	\begin{equation*}
		\slackMapRestr{Q}:\aff(Q)\cap\lineal{Q}^{\perp}\rightarrow\aff(\slackRep{Q})
	\end{equation*} 
	with
	\begin{equation}\label{eq:subspaceExt:invUpToLineal}
		(\slackMapRestr{Q})^{-1}(\slackMap{Q}(y))-y\in\lineal{Q}
		\quad
		\text{for all }y\in Q\,,
	\end{equation}
	which in particular implies
	\begin{equation}\label{eq:subspaceExt:Q}
		Q=(\slackMapRestr{Q})^{-1}(\slackRep{Q})+\lineal{Q}\,.
	\end{equation}
	It suffices to prove that~$\slackRep{Q}$ is a coordinate-symmetric extension of~$P$ via the affine projection $\tilde{p}=p\circ(\slackMapRestr{Q})^{-1}$ (which, of course, is defined on $\aff{(\slackRep{Q})}\subseteq\RR^f$ only, but can  be extended arbitrarily to~$\RR^f$ in order to formally satisfy the conditions of the definition of an extension). From~\eqref{eq:subspaceExt:Q} and~\eqref{eq:subspaceExt:plinealQ} we deduce
\begin{equation*}
	\tilde{p}(\slackRep{Q})=p((\slackMapRestr{Q})^{-1}(\slackRep{Q}))=p(Q)=P\,.
\end{equation*}
Therefore, we only need to exhibit, for each $\pi\in G$, some $\tilde{\upop}_{\pi}\in\symGr{f}$ with
\begin{equation}\label{eq:eq:subspaceExt:goalA}
	\tilde{\upop}_{\pi}.\slackRep{Q}=\slackRep{Q}
\end{equation} 
and 
\begin{equation}\label{eq:eq:subspaceExt:goalB}
	\tilde{p}(\tilde{\upop}_{\pi}.z)=\pi.\tilde{p}(z)\quad\text{for all }z\in\slackRep{Q}\,.
\end{equation} 
We construct~$\tilde{\upop}_{\pi}$ from the map $\upop_{\pi}\in\isoms{d}$ with
\begin{equation}\label{eq:eq:subspaceExt:A}
	\upop_{\pi}.Q=Q
\end{equation} 
and 
\begin{equation}\label{eq:eq:subspaceExt:B}
	p(\upop_{\pi}.y)=\pi.p(y)\quad\text{for all }y\in Q
\end{equation} 
guaranteed to exist by the symmetry of the extension~$Q$ of~$P$. Let $U\in\RR^{d\times d}$ be the orthogonal matrix and~$u\in\RR^d$ the vector with $\upop_{\pi}.y=Uy+u$ for all $y\in\RR^d$. From~\eqref{eq:eq:subspaceExt:A} (which implies $\aff(Q)=\upop_{\pi}.\aff(Q)$) and~\eqref{eq:eq:subspaceExt:uniqueRep} we derive
\begin{eqnarray*}
	Q=\upop_{\pi}^{-1}Q 
	&=& \setDef{\upop_{\pi}^{-1}.y}{y\in\aff(Q),Ay\le b}\\
	&=& \setDef{y'\in\RR^f}{\upop_{\pi}.y'\in\aff(Q),A(\upop_{\pi}.y')\le b}\\
	&=& \setDef{y'\in\RR^f}{y'\in\aff(Q),(AU)y'\le b-Au}\,.
\end{eqnarray*}
Since~$U$ is orthogonal and due to $\upop_{\pi}^{-1}.\aff(Q)=\aff(Q)$ (implying $\transpose{U}\ell\in L$ for all $\ell\in L$), the rows of the matrix~$AU$ are contained in~$L$ and have length one (since so do the rows of~$A$). Thus, because of the uniqueness of the representation~\eqref{eq:eq:subspaceExt:uniqueRep}, there is a permutation $\sigma\in\symGr{f}$ with
\begin{equation}\label{eq:eq:subspaceExt:sigma}
	(AU)_{i,\star}=A_{\sigma^{-1}(i),\star}
	\quad\text{and}\quad
 	(b-Au)_i = b_{\sigma^{-1}(i)}
\end{equation}
for all $i\in\ints{f}$. In order to show that $\tilde{\upop}_{\pi}=\sigma$ satisfies~\eqref{eq:eq:subspaceExt:goalA} and~\eqref{eq:eq:subspaceExt:goalB}, we use the equation
\begin{equation}\label{eq:eq:subspaceExt:slackMap}
	\slackMap{Q}(\upop_{\pi}.y)=\sigma.\slackMap{Q}(y)
\end{equation}
for all $y\in\RR^d$, which follows readily from
$\slackMap{Q}(\upop_{\pi}.y)=(b-Au)-(AU)y$, $\slackMap{Q}(y)=b-Ay$, and~\eqref{eq:eq:subspaceExt:sigma}. 

For each $y\in Q$ equation~\eqref{eq:eq:subspaceExt:slackMap} implies $\sigma.\slackMap{Q}(y)\in\slackRep{Q}$ due to $\upop_{\pi}.y\in Q$. Thus we conclude $\sigma.\slackRep{Q}\subseteq \slackRep{Q}$, and hence $\sigma.\slackRep{Q}=\slackRep{Q}$ since $z\mapsto\sigma.z$ defines an isometry. Thus, \eqref{eq:eq:subspaceExt:goalA} is established for $\tilde{\upop}_{\pi}=\sigma$. In  order to also show~\eqref{eq:eq:subspaceExt:goalB} for this choice of $\tilde{\upop}_{\pi}$  it remains to prove
\begin{equation}\label{eq:subspaceExt:finish}
	p((\slackMapRestr{Q})^{-1}(\sigma.\slackMap{Q}(y)))=\pi.p((\slackMapRestr{Q})^{-1})(\slackMap{Q}(y)))
\end{equation}
for all $y\in Q$. 

Due to~\eqref{eq:eq:subspaceExt:slackMap}, \eqref{eq:eq:subspaceExt:A}, and \eqref{eq:subspaceExt:invUpToLineal} the left-hand-side of~\eqref{eq:subspaceExt:finish} evaluates to $p(\upop_{\pi}.y+r)$ for some $r\in \lineal{Q}$, and thus, 
due to~\eqref{eq:subspaceExt:plinealQ} and~\eqref{eq:eq:subspaceExt:B}, to  $\pi.p(y)$. 
Similarly, the right-hand-side of~\eqref{eq:subspaceExt:finish} evaluates to $\pi.p(y)$ as well, which concludes the proof.
\end{proof}
}

%

The following lemma shows that coordinate-symmetric extensions have sections of a special type that will be crucial  for the proof of the central result on the non-existence of certain coordinate-symmetric subspace extensions (Theorem~\ref{thm:lowerbound}).

\begin{definition}
 A section $s:X\rightarrow Q$ for   
 an extension~$Q$ of a polytope~$P$ with vertex set~$X$ and projection~$p$   is called \emph{coordinate-symmetric} if the  action of~$G$  on~$s(X)$ induced by the action of the group~$G$ on~$X$ works by permutation of variables, i.e., if for every $\pi\in G$ there is a permutation $\upop_{\pi}\in\symGr{d}$
with
\begin{equation}\label{weakSym}
	s(\pi.x)=\upop_{\pi}.s(x) \quad\text{for all $x\in X$}
\end{equation}
(see Fig.~\ref{fig:diagrams}).
\end{definition}

The following statement (and its proof) generalizes the construction of sections for coordinate-symmetric extensions of matching polytopes described in Yannakakis paper~\cite[Claim~1 in the proof of Thm.~1]{Yan91}.

\begin{lemma}\label{lem:suffCondSymExt}
	Every coordinate-symmetric extension admits a  coordinate-symmetric section. 
\end{lemma}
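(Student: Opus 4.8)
The plan is to start from a coordinate-symmetric extension $Q\subseteq\RR^d$ of $P=\conv(X)$ with projection $p$, where for every $\pi\in G$ we are given $\upop_{\pi}\in\symGr{d}$ with $\upop_{\pi}.Q=Q$ and $p(\upop_{\pi}.y)=\pi.p(y)$ for all $y\in Q$, and to produce a section $s:X\to Q$ satisfying $s(\pi.x)=\upop_{\pi}.s(x)$. The natural idea (this is Yannakakis' trick) is \emph{averaging over the group}: pick any section $s_0:X\to Q$ to start with (one exists because $p(Q)=P$, so each fiber $Q\cap p^{-1}(x)$ is nonempty), then symmetrize. Concretely, fix a vertex $x_0\in X$, let $G_{x_0}=\setDef{\pi\in G}{\pi.x_0=x_0}$ be its stabilizer, and on the orbit of $x_0$ define $s$ by transport: for $x=\pi.x_0$ set $s(x)$ to be the average over the stabilizer-coset of the points $\upop_{\pi}.\upop_{\tau}.y_0$, where $y_0$ is some chosen point in $Q\cap p^{-1}(x_0)$ and $\tau$ ranges over $G_{x_0}$. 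More cleanly: put $\bar y_0 = \frac{1}{|G_{x_0}|}\sum_{\tau\in G_{x_0}}\upop_{\tau}.y_0$, and then $s(\pi.x_0):=\upop_{\pi}.\bar y_0$. Do this on each $G$-orbit of vertices independently.

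The key steps, in order, are the following. First, well-definedness: if $\pi.x_0=\pi'.x_0$ then $\pi^{-1}\pi'\in G_{x_0}$, and I must check $\upop_{\pi}.\bar y_0=\upop_{\pi'}.\bar y_0$; this reduces to showing that $\upop_{\tau}.\bar y_0=\bar y_0$ for all $\tau\in G_{x_0}$, i.e. $\bar y_0$ is fixed by the finite group $\setDef{\upop_{\tau}}{\tau\in G_{x_0}}$ of coordinate permutations — which holds because $\bar y_0$ is the average over exactly that group (or rather over its image under $\tau\mapsto\upop_{\tau}$, using that averaging over a group of linear maps produces a fixed point; a minor wrinkle is that $\tau\mapsto\upop_\tau$ need not be a homomorphism, but averaging $\frac1{|G_{x_0}|}\sum_\tau \upop_\tau.y_0$ and noting that precomposition by any $\upop_{\tau'}$ permutes the summands up to the subgroup structure of $\symGr d$ handles it — one may instead average over the actual subgroup of $\symGr d$ generated by $\{\upop_\tau:\tau\in G_{x_0}\}$ that stabilizes $x_0$'s fiber, which is cleaner). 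Second, $s(x)\in Q$: since $\upop_{\tau}.Q=Q$ we have $\upop_{\tau}.y_0\in Q$ for each $\tau$, $Q$ is convex, so $\bar y_0\in Q$, and then $\upop_{\pi}.\bar y_0\in\upop_{\pi}.Q=Q$. Third, $p(s(x))=x$: we have $p(\upop_{\tau}.y_0)=\tau.p(y_0)=\tau.x_0=x_0$ for $\tau\in G_{x_0}$, so by affineness of $p$, $p(\bar y_0)=x_0$; hence $p(\upop_{\pi}.\bar y_0)=\pi.p(\bar y_0)=\pi.x_0=x$. Fourth, the equivariance \eqref{weakSym}: for $\rho\in G$ and $x=\pi.x_0$ we get $s(\rho.x)=s((\rho\pi).x_0)=\upop_{\rho\pi}.\bar y_0$, and we want this to equal $\upop_{\rho}.s(x)=\upop_{\rho}.\upop_{\pi}.\bar y_0$; since both $\upop_{\rho\pi}$ and $\upop_{\rho}\upop_{\pi}$ are coordinate permutations that act identically on $p$-images up to the fiber over $x_0$ (both push $\bar y_0$ into $Q\cap p^{-1}(\rho.x)$), one shows $\upop_{\rho\pi}^{-1}\upop_{\rho}\upop_{\pi}$ fixes $\bar y_0$ — again because it lies in the stabilizing subgroup over which we averaged. (Alternatively, and perhaps more robustly, one can redefine $s$ by $s(x)=\frac{1}{|G|}\sum_{\pi\in G,\ \pi.x_0=x}\upop_\pi.y_0$ summed over \emph{all} of $G$, which sidesteps coset representative choices entirely and makes equivariance immediate from a reindexing of the sum.)

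The main obstacle I anticipate is precisely the bookkeeping around the fact that $\pi\mapsto\upop_{\pi}$ is only guaranteed to be a set map $G\to\symGr{d}$, not a group homomorphism: the naive "transport a single point" definition $s(\pi.x_0)=\upop_{\pi}.y_0$ is ill-defined, and the naive equivariance check fails, precisely because $\upop_{\rho\pi}\ne\upop_{\rho}\upop_{\pi}$ in general. The averaging resolves this, but one must be careful to average over an object that is genuinely a \emph{group} — the right choice is the (finite, since it consists of coordinate permutations) subgroup $H_{x_0}\le\symGr d$ of all $\sigma\in\symGr d$ with $\sigma.\bigl(Q\cap p^{-1}(x_0)\bigr)=Q\cap p^{-1}(x_0)$, or more simply to just average the orbit $\setDef{\upop_\pi.y_0}{\pi\in G,\ \pi.x_0=x_0}$ directly. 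Once the correct averaging object is identified, every step above is a short verification using only convexity of $Q$, affineness of $p$, $\upop_\pi.Q=Q$, and $p\circ\upop_\pi=\pi\circ p$ on $Q$; no calculation beyond linear algebra bookkeeping is required. I would also remark at the end that this construction is exactly the one Yannakakis uses for matching polytopes, now stated in the requisite generality, which matches the sentence preceding the lemma.
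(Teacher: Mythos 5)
Your core construction is essentially the paper's proof: work orbit by orbit, pick a base vertex $x^{\star}$ and a point $y^{\star}\in Q\cap p^{-1}(x^{\star})$, average over a group of coordinate permutations stabilizing the fiber of $x^{\star}$ to get $s(x^{\star})$, transport by the $\upop$'s, and verify $s(\pi.x)=\upop_{\pi}.s(x)$ by showing the discrepancy permutation $\upop_{\tau_{\pi.x}}^{-1}\upop_{\pi}\upop_{\tau_x}$ stabilizes the fiber and hence fixes the averaged point. The paper averages over $\tilde{S}(x^{\star})=\setDef{\sigma\in\tilde{G}}{\sigma.p^{-1}(x^{\star})=p^{-1}(x^{\star})}$, where $\tilde{G}\subseteq\symGr{d}$ is the subgroup generated by \emph{all} $\upop_{\pi}$, $\pi\in G$; your $H_{x_0}=\setDef{\sigma\in\symGr{d}}{\sigma.(Q\cap p^{-1}(x_0))=Q\cap p^{-1}(x_0)}$ works for exactly the same reason, provided you also record the fiber-permutation property $\upop_{\pi}.p^{-1}(x)=p^{-1}(\pi.x)$ (the paper proves it by an inclusion-plus-dimension argument), which is what puts the discrepancy permutations into $H_{x_0}$.

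Two of your hedged ``cleaner'' alternatives do not work, so you should commit to the fiber-stabilizer choice. First, averaging only over the subgroup generated by $\setDef{\upop_{\tau}}{\tau\in G_{x_0}}$ is not enough: the elements that must fix $\bar{y}_0$, namely $\upop_{\pi'}^{-1}\upop_{\pi}$ (well-definedness) and $\upop_{\rho\pi}^{-1}\upop_{\rho}\upop_{\pi}$ (equivariance), are built from $\upop$'s of permutations that do \emph{not} stabilize $x_0$, and there is no reason they lie in that smaller subgroup; they only demonstrably lie in the fiber-stabilizer inside $\tilde{G}$ (respectively in your $H_{x_0}$). Second, the closing ``more robust'' variant $s(x)=\tfrac{1}{|G|}\sum_{\pi.x_0=x}\upop_{\pi}.y_0$ does not make equivariance immediate: reindexing the coset sum yields $\sum\upop_{\rho\pi}.y_0$, whereas $\upop_{\rho}.s(x)=\sum\upop_{\rho}\upop_{\pi}.y_0$, and these need not agree precisely because $\pi\mapsto\upop_{\pi}$ is not a homomorphism --- the very obstacle you identified; moreover the normalization must be the coset size $|G_{x_0}|$ rather than $|G|$, or else $p(s(x))\ne x$. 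With the $H_{x_0}$ (or the paper's $\tilde{S}(x^{\star})$) averaging, your argument is complete and coincides with the paper's.
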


\begin{proof}
Let us first observe that a coordinate-symmetric extension (with notations  as above) satisfies
\begin{equation}\label{eq:permuteFibers}
	\upop_{\pi}.p^{-1}(x)=p^{-1}(\pi.x)\quad\text{for all $\pi\in G$ and $x\in X$}\,,
\end{equation}
(thus, $\upop_{\pi}$ permutes the fibers of points in~$X$ according to~$\pi$)
since~\eqref{pGmap} readily implies $\upop_{\pi}.p^{-1}(x)\subseteq p^{-1}(\pi.x)$, from which equality follows because both sets are affine subspaces of equal dimension (as all non-empty fibers of~$p$ have the same dimension and $\upop_{\pi}.p^{-1}(x)$ is an image of one of these fibers under a bijective affine transformation).

Let~$\tilde{G}$ be the subgroup of~$\symGr{d}$
generated by $\setDef{\upop_{\pi}}{\pi\in G}$. Clearly, we have 
\begin{equation}\label{eq:GtildeQtoQ}
	\sigma.Q= Q\quad\text{for all }\sigma\in \tilde{G}\,.
\end{equation}
We start the construction of a coordinate-symmetric section~$s:X\rightarrow Q$  by choosing from each orbit $\setDef{\sigma.x}{\sigma\in G}$, $x\in X$ under the action of~$G$ some $x^{\star}\in X$ as well as an arbitrary point~$y^{\star}\in Q\cap p^{-1}(x^{\star})$ in the intersection of~$Q$ and the fiber of~$x^{\star}$. Actually, as we can consider the orbits one by one here, we will assume in the following  that there is just one of them, i.e., the action of~$G$ on~$X$ is \emph{transitive}.
Denoting by 
\begin{equation*}
	\tilde{S}(x^{\star})=\setDef{\sigma\in\tilde{G}}{\sigma.p^{-1}(x^{\star})=p^{-1}(x^{\star})}\,,
\end{equation*}
the subgroup of~$\tilde{G}$ containing all permutations that map the fiber $p^{-1}(x^{\star})$ to itself, we define 
\begin{equation}\label{eq:sofxstar}
	s(x^{\star})=\frac{1}{|\tilde{S}(x^{\star})|}\sum_{\sigma\in\tilde{S}(x^{\star})}\sigma.y^{\star}\,,
\end{equation}
which  is a point in the convex set (polyhedron) $Q\cap p^{-1}(x^{\star})$, because due to~\eqref{eq:GtildeQtoQ} we have $\sigma.y^{\star}\in Q\cap p^{-1}(x^{\star})$ for all $\sigma\in\tilde{S}(x^{\star})$.
For each $x\in X$ we now choose some $\tau_x\in G$ with $\tau_x.x^{\star}=x$ (recall that we assumed the action of~$G$ on~$X$ to be transitive) and define 
\begin{equation*}
	s(x)=\upop_{\tau_x}.s(x^{\star})\,,
\end{equation*}
which  is contained in $Q\cap p^{-1}(x)$ due to~\eqref{eq:GtildeQtoQ} and~\eqref{eq:permuteFibers}. 

In order to finish the proof of the lemma, it suffices to show $s(\pi.x)=\upop_{\pi}.s(x)$ for every $x\in X$ and~$\pi\in G$. To deduce this equation, observe that due to~\eqref{eq:permuteFibers} we have
\begin{multline*}
	\upop_{\tau_{\pi.x}}^{-1}\upop_{\pi}\upop_{\tau_x}.p^{-1}(x^{\star})=
	\upop_{\tau_{\pi.x}}^{-1}.(\upop_{\pi}.(\upop_{\tau_x}.p^{-1}(x^{\star})))\\
	=\upop_{\tau_{\pi.x}}^{-1}.(\upop_{\pi}.p^{-1}(x))
	=\upop_{\tau_{\pi.x}}^{-1}.p^{-1}(\pi.x)
	=p^{-1}(x^{\star})\,.
\end{multline*}
Thus, $\upop=\upop_{\tau_{\pi.x}}^{-1}\upop_{\pi}\upop_{\tau_x}\in\tilde{S}(x^{\star})$ holds, and in particular, $\sigma\mapsto \upop\sigma$ defines a bijection $\tilde{S}(x^{\star})\rightarrow\tilde{S}(x^{\star})$. Therefore, we can conclude 
\begin{equation}\label{eq:omega}
	\upop_{\tau_{\pi.x}}^{-1}\upop_{\pi}\upop_{\tau_x}.s(x^{\star})=\upop.s(x^{\star})
	=\frac{1}{|\tilde{S}(x^{\star})|}\sum_{\sigma\in\tilde{S}(x^{\star})}\upop\sigma.y^{\star}
	=	s(x^{\star})
\end{equation}
 from~\eqref{eq:sofxstar}, which implies the equation
\begin{equation*}
	\upop_{\pi}.s(x)=\upop_{\pi}.(\upop_{\tau_x}.s(x^{\star}))=\upop_{\pi}\upop_{\tau_x}.s(x^{\star})=\upop_{\tau_{\pi.x}}.s(x^{\star})=s(\pi.x)
\end{equation*} 
that we needed to establish.
\end{proof}

\old{
In fact, with respect to the validity of Lemma~\ref{lem:suffCondSymExt}, we could have made more general notions of symmetric and weakly coordinate-symmetric extensions by requiring, for every~$\pi\in G$, instead if a coordinate permutation the existence of an arbitrary linear transformation of~$\RR^d$ satisfying the corresponding constraint that would be analogous to~\eqref{pGmap} (we need \emph{linear} transformations in~\eqref{eq:omega}). However, since later in the treatment we will  rely heavily on the special structure of coordinate permutations, we avoided this generalization. Furthermore, at least in the applications we considered so far, the group~$G$ itself usually acts by coordinate permutations, and in this case, looking only at coordinate permutations in the extended spaces seems not to be too restrictive. 
}


%

If $s:X\rightarrow Q$ is a coordinate symmetric section, then~$G$ acts on the set $\mathcal{S}=\{s_1,\dots,s_d\}$ of the component functions of~$s$ via
\begin{equation*}
	\pi.s_j=s_{\upop_{\pi^{-1}}^{-1}(j)}
\end{equation*} 
for each $j\in\ints{d}$. 
In order to see that this definition indeed is well-defined (note that $s_1,\dots,s_d$ need not be pairwise distinct functions) and yields a group action, observe that, for each $j\in\ints{d}$ and $\pi\in G$, we have
\begin{equation}\label{eq:actionCompSect}
	(\pi.s_j)(x)=s_{\upop_{\pi^{-1}}^{-1}(j)}(x)=(\upop_{\pi^{-1}}.s(x))_j=s_j(\pi^{-1}.x)\quad\text{for all }x\in X\,,
\end{equation}
from which one deduces $1.s_j=s_j$ for the one-element~$1$ in~$G$ as well as $(\pi\pi').s_j=\pi.(\pi'.s_j)$ for all $\pi,\pi'\in G$.
The \emph{isotropy} group of $s_j\in\mathcal{S}$ under this action is
\begin{equation*}
	\isoGr{G}{s_j}=\setDef{\pi\in G}{\pi.s_j=s_j}\,.
\end{equation*}
From~\eqref{eq:actionCompSect} one deduces
\begin{equation}\label{eq:isoInvariant}
	s_j(x)=s_j(\pi^{-1}.x)\quad\text{for all }x\in X, \pi\in\isoGr{G}{s_j}\,.
\end{equation}

In general, it will be impossible to identify the isotropy groups~$\isoGr{G}{s_j}$ without more knowledge on the section~$s$. However, for each isotropy group $\isoGr{G}{s_j}$, one can at least bound its index~$(G:\isoGr{G}{s_j})=\sabs{G}/\sabs{\isoGr{G}{s_j}}$  in~$G$, which will allow us to identify (large) subgroups of $\isoGr{G}{s_j}$ later.

\begin{lemma}\label{lem:orderIsoGr}
		In the setting described above, we have $(G:\isoGr{G}{s_j})\le d$\,.
\end{lemma}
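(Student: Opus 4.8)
The statement to prove is $(G:\isoGr{G}{s_j})\le d$ for each $j\in\ints{d}$.

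The plan is to use the orbit-stabilizer theorem: the index $(G:\isoGr{G}{s_j})$ equals the cardinality of the orbit $G.s_j = \setDef{\pi.s_j}{\pi\in G}$ of $s_j$ under the action of $G$ on the set $\mathcal{S}=\{s_1,\dots,s_d\}$. First I would recall that $\mathcal{S}$ has at most $d$ elements — indeed, $\mathcal{S}$ is by definition the set $\{s_1,\dots,s_d\}$ of component functions of the section $s:X\to Q\subseteq\RR^d$, so $|\mathcal{S}|\le d$ (possibly strictly less, since the $s_j$ need not be pairwise distinct). Since the orbit $G.s_j$ is a subset of $\mathcal{S}$, we get $|G.s_j|\le|\mathcal{S}|\le d$.

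The remaining step is just to invoke orbit-stabilizer: for the action of the finite group $G$ on $\mathcal{S}$, the stabilizer of $s_j$ is precisely $\isoGr{G}{s_j}=\setDef{\pi\in G}{\pi.s_j=s_j}$ by definition, and the orbit-stabilizer theorem gives a bijection between $G/\isoGr{G}{s_j}$ (left cosets) and the orbit $G.s_j$, whence $(G:\isoGr{G}{s_j})=\sabs{G}/\sabs{\isoGr{G}{s_j}}=\sabs{G.s_j}$. Combining with the previous bound yields $(G:\isoGr{G}{s_j})=\sabs{G.s_j}\le d$, as desired.

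I do not anticipate any real obstacle here: this is a one-line application of orbit-stabilizer once one observes that the action takes place on a set of size at most $d$. The only point requiring the slightest care is that the action of $G$ on $\mathcal{S}$ was already verified to be well-defined above (equation~\eqref{eq:actionCompSect} and the remarks following it), so that orbit-stabilizer genuinely applies; beyond that, the argument is immediate. It might be worth remarking explicitly that $G$ is finite (being a group of permutations of the finite vertex set $X$, or a subgroup of $\symGr{m}$), so that the indices and orders involved are finite, though the inequality $\sabs{G.s_j}\le d$ itself does not even need finiteness of $G$.
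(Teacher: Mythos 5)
Your proof is correct and follows exactly the paper's argument: the index $(G:\isoGr{G}{s_j})$ equals the size of the orbit of $s_j$ under the action of $G$ on $\mathcal{S}=\{s_1,\dots,s_d\}$, and since $\sabs{\mathcal{S}}\le d$ the orbit has at most $d$ elements. No gaps; the extra remark on finiteness is fine but, as you note, not needed for the inequality.
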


\begin{proof}
This follows readily from the fact that the index $(G:\isoGr{G}{s_j})$ of the isotropy group of the element $s_j\in\mathcal{S}$ under the action of~$G$ on~$\mathcal{S}$ equals the cardinality of the orbit of~$s_j$ under that action, which due to $|\mathcal{S}|\le d$, clearly is bounded from above by~$d$. 	
\end{proof}

Finally, the following result will turn out to be useful in order to derive lower bounds on the sizes of symmetric extensions for one polytope from bounds for another one. 
\begin{lemma}\label{lem:boundForFace}
	Let $Q\subseteq\RR^d$ be an extension of the polytope~$P\subseteq\RR^m$ with projection $p:\RR^d\rightarrow\RR^m$, and let the face~$P'$ of~$P$ be an extension of  a polytope $R\subseteq\RR^{k}$   with projection $q:\RR^m\rightarrow\RR^{k}$.  Then the face $Q'=p^{-1}(P')\cap Q\subseteq\RR^d$ of~$Q$ is an extension of~$R$ via the composed projection $q\circ p:\RR^{d}\rightarrow\RR^{k}$. 
	
	If
	 the extension~$Q$ of~$P$ is symmetric with respect to  a group~$G$, and~$H$ is a group of affine transformations  such that for every $\tau\in H$ we have $\tau.R=R$ and there is some $\pi_{\tau}\in G$ with $\pi_{\tau}.P'=P'$ and $q(\pi_{\tau}.x)=\tau.q(x)$ for all $x\in P'$, then
	the extension~$Q'$ of~$R$ is  symmetric (with respect to the action of the group~$H$).
\end{lemma}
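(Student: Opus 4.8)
The statement has two parts. For the first part, I would verify directly that $Q' = p^{-1}(P') \cap Q$ together with the composed projection $q \circ p$ is an extension of $R$. The inclusion $(q\circ p)(Q') \subseteq R$ is immediate since $p(Q') \subseteq P'$ and $q(P') = R$. For the reverse inclusion, take any $r \in R$. Since $q(P') = R$, there is $x \in P'$ with $q(x) = r$; since $p(Q) = P \supseteq P'$, there is $y \in Q$ with $p(y) = x$, and then $y \in p^{-1}(P') \cap Q = Q'$ with $(q\circ p)(y) = r$. It remains to observe that $Q'$ is indeed a face of $Q$: since $P'$ is a face of $P$, it is cut out by some inequality $\langle a, \cdot\rangle \le \beta$ valid for $P$ with $P' = P \cap \{\langle a,x\rangle = \beta\}$; pulling back along the affine map $p$, the inequality $\langle a, p(y)\rangle \le \beta$ is valid on $Q$ and $Q' = Q \cap \{\langle a, p(y)\rangle = \beta\}$ is the corresponding face. (That $q\circ p$ is affine is clear as a composition of affine maps.)

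\textbf{Second part (symmetry of $Q'$).} Assume the extension $Q$ of $P$ is symmetric with respect to $G$, and let $H$ be as in the hypothesis. Fix $\tau \in H$ and take $\pi_\tau \in G$ with $\pi_\tau.P' = P'$ and $q(\pi_\tau.x) = \tau.q(x)$ for all $x \in P'$. By symmetry of $Q$, there is an isometry $\upop_{\pi_\tau} \in \isoms{d}$ with $\upop_{\pi_\tau}.Q = Q$ and $p(\upop_{\pi_\tau}.y) = \pi_\tau.p(y)$ for all $y \in Q$. I claim $\upop_{\pi_\tau}$ also maps $Q'$ onto $Q'$. Indeed, for $y \in Q'$ we have $p(y) \in P'$, hence $p(\upop_{\pi_\tau}.y) = \pi_\tau.p(y) \in \pi_\tau.P' = P'$, so $\upop_{\pi_\tau}.y \in p^{-1}(P') \cap Q = Q'$ (using $\upop_{\pi_\tau}.Q = Q$); this gives $\upop_{\pi_\tau}.Q' \subseteq Q'$, and applying the same to $\pi_\tau^{-1}$ (or noting $\upop_{\pi_\tau}$ is a bijection of $Q$ onto $Q$ and $Q'$ has a well-defined preimage) yields equality. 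Finally, for the projection compatibility, for $y \in Q'$ we compute
\begin{equation*}
	(q\circ p)(\upop_{\pi_\tau}.y) = q(p(\upop_{\pi_\tau}.y)) = q(\pi_\tau.p(y)) = \tau.q(p(y)) = \tau.(q\circ p)(y)\,,
\end{equation*}
where the third equality uses $q(\pi_\tau.x) = \tau.q(x)$ for $x = p(y) \in P'$. Thus the assignment $\tau \mapsto \upop_{\pi_\tau}$ witnesses that the extension $Q'$ of $R$ is symmetric with respect to $H$.

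\textbf{Main obstacle.} The argument is essentially bookkeeping about affine maps, faces, and commuting diagrams; there is no deep step. The one point requiring a little care is the surjectivity verification in the first part—ensuring that the chosen preimage $y \in Q$ actually lands in $Q'$, which it does precisely because $Q'$ is defined as $p^{-1}(P') \cap Q$ rather than something smaller. A second minor subtlety is that we only know the commutation relation $q(\pi_\tau.x) = \tau.q(x)$ on $P'$ (not all of $\RR^m$), so one must check that $p(y) \in P'$ whenever $y \in Q'$ before invoking it — but this is exactly the definition of $Q'$. No reordering of steps is needed; the two parts are independent and can be written in the order stated.
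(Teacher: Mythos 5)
Your proof is correct and follows essentially the same route as the paper: the first part amounts to the direct computation $q(p(Q'))=q(P')=R$ (you just spell out the surjectivity and the face property of $Q'$), and the second part reuses the isometry $\upop_{\pi_\tau}$ guaranteed by the symmetry of $Q$, checking $\upop_{\pi_\tau}.Q'=Q'$ and the commutation relation exactly as the paper does. The only step worth tightening is the equality $\upop_{\pi_\tau}.Q'=Q'$: justify it either as in the paper (the map $y\mapsto\upop_{\pi_\tau}.y$ restricts to an automorphism of $Q$, so $\upop_{\pi_\tau}.Q'$ is a face of $Q$ of the same dimension as $Q'$ contained in $Q'$, hence equal to it), or by running your containment argument with the isometry $\upop_{\pi_\tau}^{-1}$, which satisfies $p(\upop_{\pi_\tau}^{-1}.y)=\pi_\tau^{-1}.p(y)$ on $Q$ and $\pi_\tau^{-1}.P'=P'$ --- not by invoking the hypothesis for $\tau^{-1}\in H$, since the isometry attached to $\pi_{\tau^{-1}}$ need not be $\upop_{\pi_\tau}^{-1}$.
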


\begin{proof}
	Due to $q(p(Q'))=q(P')=R$, the polyhedron~$Q'$ (together with the projection~$q\circ p$) clearly is an extension of~$R$.
	In order to prove the statement on the symmetry of this extension, let $\tau\in H$ be an arbitrary element of~$H$ with $\pi_{\tau}\in G$  as guaranteed to exist for~$\tau$ in the statement of the lemma, and let  $\upop_{\pi_{\tau}}\in\isoms{d}$  satisfy 
$\upop_{\pi_{\tau}}.Q=Q$
and~\eqref{pGmap}
	(as guaranteed to exist by the symmetry of the extension~$Q$ of~$P$). 
	Since, for all $y\in Q'$,  we obviously have
	\begin{equation*}
		q(p(\upop_{\pi_{\tau}}.y))=q(\pi_{\tau}.p(y))=\tau.(q(p(y)))\,,
	\end{equation*}
	it suffices to show $\upop_{\pi_{\tau}}.Q'=Q'$. As $y\mapsto\upop_{\pi_{\tau}}.y$ defines an automorphism  of~$Q$ (mapping faces of~$Q$ to faces of the same dimension), it suffices to show $\upop_{\pi_{\tau}}.Q'\subseteq Q'$. Due to $\upop_{\pi_{\tau}}.Q=Q$ this relation is implied by $\upop_{\pi_{\tau}}.p^{-1}(P')\subseteq p^{-1}(P')$, which finally follows from
	\begin{equation*}
		p(\upop_{\pi_{\tau}}.p^{-1}(P'))=\pi_{\tau}.p(p^{-1}(P'))=\pi_{\tau}.P'=P'\,.
	\end{equation*}
\end{proof}

\section{Bounds on Symmetric Extensions of~$\PMatch{\ell}{n}$}
\label{sec:match}

In this section, we prove the following result, where all crucial ideas are taken from Yannakakis' paper~\cite{Yan91} (though here presented in a different way).

\begin{theorem}\label{thm:lowerbound}
	For every~$n\ge 3$ and odd~$\ell$ with $\ell\le \tfrac{n}{2}$, there exists no coordinate-symmetric subspace extension for $\PMatch{\ell}{n}$ with at most $\binom{n}{(\ell -1)/2}$ variables (with respect to the group~$\symGr{n}$ acting via permuting the nodes of~$K_n$ as described in the Introduction). 
\end{theorem}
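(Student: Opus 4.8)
The plan is to apply Proposition~\ref{prop:lambda} together with the section machinery from Lemmas~\ref{lem:suffCondSymExt} and~\ref{lem:orderIsoGr}. Suppose, for contradiction, that $\PMatch{\ell}{n}=\conv(X)$, $X=\setDef{\charVec{M}}{M\in\match{\ell}{n}}$, has a coordinate-symmetric subspace extension $Q\subseteq\RR_+^d$ with $d\le\binom{n}{(\ell-1)/2}$ variables, symmetric with respect to $G=\symGr{n}$. By Lemma~\ref{lem:suffCondSymExt}, $Q$ admits a coordinate-symmetric section $s:X\rightarrow Q$; let $s_1,\dots,s_d$ be its component functions, on which $G$ acts via~\eqref{eq:actionCompSect}. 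By Lemma~\ref{lem:orderIsoGr}, each isotropy group $\isoGr{\symGr{n}}{s_j}$ has index at most $d\le\binom{n}{(\ell-1)/2}$ in $\symGr{n}$.

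The first key step is a combinatorial fact about subgroups of $\symGr{n}$ of small index: any subgroup $H\le\symGr{n}$ with $(\symGr{n}:H)\le\binom{n}{(\ell-1)/2}$ must contain the pointwise stabilizer of some set $W_j\subseteq\ints{n}$ with $\abs{W_j}\le (\ell-1)/2$ (equivalently, $H\supseteq\symGr{\ints{n}\setminus W_j}$); this is exactly where the bound $\binom{n}{(\ell-1)/2}$ and the hypothesis $\ell\le n/2$ enter, and it is the standard ``subgroups of small index are close to set stabilizers'' lemma used by Yannakakis. Applying this to $H=\isoGr{\symGr{n}}{s_j}$ and invoking~\eqref{eq:isoInvariant}, each coordinate function $s_j$ depends only on how $\charVec{M}$ meets the edges incident to $W_j$: more precisely, $s_j(\charVec{M})$ is determined by the trace of the matching $M$ on the at most $(\ell-1)/2$ nodes of $W_j$. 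The second key step is then to exploit that, for a matching of size $\ell$, this trace touches at most $2\cdot(\ell-1)/2=\ell-1$ of the $2\ell$ matched nodes, so $s_j$ cannot ``see'' whether a given extra edge is present.

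The third step is to build the Farkas certificate for Proposition~\ref{prop:lambda}. Fix a valid inequality for $\PMatch{\ell}{n}$ that is violated on the boundary between two ``close'' matchings — concretely, take a set of $2\ell+1$ nodes and the odd-set / near-perfect-matching type inequality $x(E(S))\le \ell-1$ or a lifted edge-constraint, for which exactly the matchings avoiding one designated structure are tight. Following Yannakakis, one constructs $\lambda\in\RR^X$ supported on a small, symmetric family of matchings (an inclusion–exclusion over which edges from a fixed $(\ell+1)$-set or from a fixed $K_{2\ell+1}$ are used), with alternating signs, so that $\sum_x(\beta-\scalProd{a}{x})\lambda_x=-1$ while $\sum_x s_j(\charVec{M})\lambda_x=0$ for every $j$. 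The vanishing of these sums is forced by the second step: since each $s_j$ only reads the trace on $\le\ell-1$ nodes, grouping the terms of $\lambda$ by that trace makes each group sum to zero. This contradicts Proposition~\ref{prop:lambda}, completing the proof.

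The main obstacle I expect is making the two quantitative inputs mesh precisely: (i) pinning down the exact statement ``index $\le\binom{n}{(\ell-1)/2}$ implies containing $\symGr{\ints{n}\setminus W}$ for some $\abs{W}\le(\ell-1)/2$'' with the right constant — this needs a careful argument about orbits of $H$ on $(\ell-1)/2$-subsets and the assumption $\ell\le n/2$ to avoid degenerate cases — and (ii) choosing the valid inequality and the signed measure $\lambda$ so that $s_j$-invariance on $\ell-1$ coordinates exactly kills $\sum_x s_j(\charVec M)\lambda_x$ while the slack sum is nonzero; the parity of $\ell$ is what guarantees the alternating sums do not accidentally cancel the wrong quantity. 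Everything else is bookkeeping with group actions and the linear algebra of~\eqref{eq:lambda:1}--\eqref{eq:lambda:2}.
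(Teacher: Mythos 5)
Your high-level strategy coincides with the paper's (coordinate-symmetric section via Lemma~\ref{lem:suffCondSymExt}, the index bound of Lemma~\ref{lem:orderIsoGr}, invariance of the component functions $s_j$ under large stabilizers, and then a certificate against Proposition~\ref{prop:lambda}), but two of the concrete ingredients you state are incorrect. First, the group-theoretic step is false in the form you quote it: a subgroup of $\symGr{n}$ of index at most $\binom{n}{(\ell-1)/2}$ need \emph{not} contain $\symGr{\ints{n}\setminus W}$ for a small $W$ --- the alternating group $\altGr{n}$ has index two and contains no such pointwise stabilizer. The correct statement (Lemma~\ref{lem:Yannakakis-claim}, proved via Bochert's theorem) only yields the \emph{even} permutations fixing some $W$ with $\sabs{W}\le(\ell-1)/2$ pointwise; to conclude that $s_j$ depends only on the trace of the matching on $W_j$ one then still needs the parity repair used in the paper: since $\sabs{M}=\ell>\sabs{W_j}$, one may compose with a transposition of two matched nodes outside $W_j$, so the orbits of the alternating stabilizer on $\match{\ell}{n}$ agree with those of the full stabilizer. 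Second, your proposed inequality is not valid for $\PMatch{\ell}{n}$: with $\sabs{S}=2\ell+1$ a matching of size $\ell$ can lie entirely inside $S$, so $x(E(S))\le\ell-1$ fails. The paper instead uses $x(E(V_{\star}))\le(\ell-1)/2$ for an odd set $V_{\star}$ of exactly $\ell$ nodes.

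The more serious issue is that the heart of the proof --- the actual construction of $\lambda$ --- is missing; you assert that ``following Yannakakis'' an alternating-signs/inclusion--exclusion choice will make all section sums vanish while the slack sum is $-1$, and you yourself flag this as the main obstacle. Trace-invariance alone does not produce such a $\lambda$; what makes it work is a specific polynomial structure. In the paper one fixes $V^{\star}$ disjoint from $V_{\star}$ with $\sabs{V^{\star}}=\ell$, supports $\lambda$ on the perfect matchings of $V_{\star}\cup V^{\star}$, and takes $\lambda$ constant $=\lambda_i$ on each class $\mathcal{M}^{\star}_i$ of matchings with exactly $i$ crossing edges ($i$ odd). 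A symmetrization computation shows that for every trace class $A$ the count $\sabs{\setDef{M\in\mathcal{M}^{\star}_i}{A\subseteq M}}$ equals $\sabs{\mathcal{M}^{\star}_i}$ times a polynomial in $i$ of degree at most $\sabs{A}\le(\ell-1)/2$, while the slack of $x(E(V_{\star}))\le(\ell-1)/2$ on $\mathcal{M}^{\star}_i$ is the degree-one polynomial $(i-1)/2$ taking the value $-1$ at $i=0$. Choosing $\lambda_i=\gamma_i/\sabs{\mathcal{M}^{\star}_i}$, where the $\gamma_i$ come from the interpolation lemma ($\sum_{i\in I}f(i)\gamma_i=f(0)$ for all $f$ of degree at most $k$, with $\sabs{I}=k+1$), then makes every constraint~\eqref{eq:lambda:1} evaluate to a nonnegative number and~\eqref{eq:lambda:2} evaluate to $-1$. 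Without this quadrature/degree argument (or a genuine substitute for it), and with the two misstatements above, your outline does not yet constitute a proof.
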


From Theorem~\ref{thm:lowerbound}, we can derive the following more general lower bounds.
Since we need it in the proof of the next result, and also for later reference, we state a simple fact on binomial coefficients first. 

\begin{lemma}\label{lem:binomialAsymptot}
	For each constant $b\in\NN$ there is some constant~$\beta>0$ with
	\begin{equation*}
		\binom{M-b}{N}\ge \beta\binom{M}{N}
	\end{equation*}
	for all large enough~$M\in\NN$ and $N\le \frac{M}{2}$. 
\end{lemma}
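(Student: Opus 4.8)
The statement is the elementary estimate
\[
\binom{M-b}{N}\ge\beta\binom{M}{N}
\]
for a fixed $b\in\NN$, all sufficiently large $M$, and all $N\le M/2$. The plan is to compare the two binomial coefficients directly by writing their ratio as a product of $b$ factors and bounding each factor from below using $N\le M/2$.

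\textbf{Key steps.} First I would dispose of the trivial cases: if $N>M-b$ then $\binom{M-b}{N}=0$, but this cannot happen once $M$ is large, since $N\le M/2$ and $M/2\le M-b$ for $M\ge 2b$; so for $M\ge 2b$ we may assume $0\le N\le M-b$ and both coefficients are positive. Next, I would expand the ratio:
\[
\frac{\binom{M-b}{N}}{\binom{M}{N}}
=\frac{(M-b)!\,(M-N)!}{M!\,(M-b-N)!}
=\prod_{i=0}^{b-1}\frac{M-N-i}{M-i}.
\]
Each factor is of the form $\frac{M-N-i}{M-i}=1-\frac{N}{M-i}$, and since $0\le i\le b-1$ and $N\le M/2$, we have $M-i\ge M-b+1$ and hence, for $M\ge 2b$,
\[
\frac{N}{M-i}\le\frac{M/2}{M-b+1}\le\frac{1}{2}\cdot\frac{M}{M-b+1}\le 1-\eta
\]
for some fixed $\eta>0$ once $M$ is large enough (for instance $\eta=\tfrac14$ works for all $M\ge 4b$, since then $\frac{M}{M-b+1}\le\frac{M}{M-M/4}=\frac43$, giving $\frac{N}{M-i}\le\frac23$). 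Thus each of the $b$ factors is at least $\eta$, so the whole product is at least $\eta^{b}$, and we may take $\beta=\eta^{b}>0$ (a constant depending only on $b$). Choosing the threshold on $M$ as, say, $M\ge 4b$ makes all the inequalities above valid simultaneously.

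\textbf{Main obstacle.} There is essentially no obstacle here; the only care needed is to make sure the denominators $M-i$ stay bounded away from $0$ uniformly (handled by $M\ge 2b$) and that the bound on $N/(M-i)$ is uniform in $N\le M/2$ and in $i\in\{0,\dots,b-1\}$, which is exactly where the hypothesis $N\le M/2$ enters and is what forces us to pass to "large enough $M$" rather than all $M$. One could alternatively phrase the argument via $\binom{M-b}{N}/\binom{M-b}{N'}$-type monotonicity or by induction on $b$ (each step peeling off one factor $\frac{M-N}{M}\ge\frac12\cdot\frac{M}{M}$, wait — more precisely $\ge\frac{M-N}{M}\ge\frac{M/2}{M}=\frac12$ when $N\le M/2$, so one even gets $\beta=2^{-b}$ with threshold $M\ge b$ and a tiny bit of extra care for $N>M-b$), but the direct product computation above is the cleanest and I would present that.
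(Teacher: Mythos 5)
Your argument is correct: the ratio $\binom{M-b}{N}/\binom{M}{N}=\prod_{i=0}^{b-1}\frac{M-N-i}{M-i}$ is computed correctly, each factor is bounded below by a positive constant uniformly in $N\le M/2$ and $i\le b-1$ once $M\ge 4b$, and the degenerate case $N>M-b$ is correctly ruled out, yielding $\beta=\eta^b$ depending only on $b$. The paper itself states this lemma as ``a simple fact'' without proof, so there is no proof to compare against; your direct product-of-ratios argument is exactly the natural way to verify it and is complete as written.
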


\begin{theorem}\label{thm:lbgeneral}
	There is a constant $C>0$ such that, for all~$n$ and $1\le\ell\le\tfrac{n}{2}$, the size of every extension for $\PMatch{\ell}{n}$ that is symmetric (with respect to the group~$\symGr{n}$ acting via permuting the nodes of~$K_n$ as described in the Introduction) is bounded from below by 
	\begin{equation*}
		C\cdot\binom{n}{\lfloor(\ell-1)/2\rfloor}\,.
	\end{equation*}
\end{theorem}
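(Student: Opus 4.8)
The plan is to derive Theorem~\ref{thm:lbgeneral} from the sharp but restricted Theorem~\ref{thm:lowerbound} by three reductions: (i) passing from general isometry-symmetric extensions to coordinate-symmetric subspace extensions via Lemma~\ref{lem:subspaceExt}; (ii) handling the parity of $\ell$, since Theorem~\ref{thm:lowerbound} is stated only for odd $\ell$; and (iii) handling the case where $\ell$ is small relative to $n$ versus the restriction $\ell \le n/2$ already present. First I would note that by Lemma~\ref{lem:subspaceExt} it suffices to bound the size of coordinate-symmetric subspace extensions of $\PMatch{\ell}{n}$; and for a subspace extension, its size (number of facets) is at least its ambient dimension $d$ minus the dimension of the lineality space — more simply, for a \emph{pointed} subspace extension the number of variables is at most the number of facets, and Lemma~\ref{lem:subspaceExt} lets us assume pointedness, so a lower bound of $D$ on the number of variables of any coordinate-symmetric subspace extension yields the same lower bound $D$ on the size of any symmetric extension. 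Thus Theorem~\ref{thm:lowerbound} already gives: for odd $\ell \le n/2$, every symmetric extension of $\PMatch{\ell}{n}$ has size more than $\binom{n}{(\ell-1)/2}$.

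Next I would treat even $\ell$. The key observation is that $\PMatch{\ell-1}{n'}$ is (affinely isomorphic to) a face of $\PMatch{\ell}{n}$ in a symmetry-compatible way, so that Lemma~\ref{lem:boundForFace} transfers the lower bound. Concretely, fix two special nodes, say $n-1$ and $n$, and consider the face of $\PMatch{\ell}{n}$ defined by $x_{\{n-1,n\}} = 1$; its vertices are exactly the characteristic vectors $\charVec{M}$ with $\{n-1,n\}\in M$, and forgetting the coordinate $\{n-1,n\}$ and all coordinates incident to $n-1$ or $n$ identifies this face with $\PMatch{\ell-1}{n-2}$. The group $\symGr{n-2}$ acting on nodes $\ints{n-2}$ is realized inside $\symGr{n}$ as the stabilizer of $\{n-1,n\}$ (fixing both $n-1$ and $n$), which preserves this face and acts compatibly with the projection. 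Hence by Lemma~\ref{lem:boundForFace}, a symmetric extension of $\PMatch{\ell}{n}$ of size $f$ yields a symmetric extension of $\PMatch{\ell-1}{n-2}$ of size at most $f$; since $\ell-1$ is odd and $\ell-1 \le (n-2)/2$ is implied by $\ell \le n/2$ (for $\ell \ge 2$; the case $\ell=1$ is trivial as $\binom{n}{0}=1$), we get $f > \binom{n-2}{(\ell-2)/2} = \binom{n-2}{\lfloor(\ell-1)/2\rfloor - 1 +1}$... here I must be careful: for even $\ell$, $\lfloor(\ell-1)/2\rfloor = (\ell-2)/2$, so the bound we want is $C\binom{n}{(\ell-2)/2}$ and what we obtained is $\binom{n-2}{(\ell-2)/2}$. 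By Lemma~\ref{lem:binomialAsymptot} (with $b=2$, $M = n$, $N = (\ell-2)/2 \le n/2$), $\binom{n-2}{(\ell-2)/2} \ge \beta \binom{n}{(\ell-2)/2}$ for all large enough $n$, and one absorbs the finitely many small $n$ into the constant $C$. The odd case similarly needs Lemma~\ref{lem:binomialAsymptot} only if we wanted to weaken $\binom{n}{(\ell-1)/2}$, but there it is already exactly $\binom{n}{\lfloor(\ell-1)/2\rfloor}$, so $C=1$ suffices (modulo the strictness, giving $C \cdot \binom{n}{\lfloor(\ell-1)/2\rfloor}$ with any $C \le 1$).

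Thus the proof assembles as: choose $C = \beta$ from Lemma~\ref{lem:binomialAsymptot} with $b=2$, shrunk if necessary so that the inequality also covers all small values of $n$ (using that the size of any extension of a polytope of dimension $\ge 1$ is at least $2$, say, or simply noting there are finitely many exceptional $(n,\ell)$ each contributing a positive ratio). For odd $\ell$, apply Theorem~\ref{thm:lowerbound} together with Lemma~\ref{lem:subspaceExt} directly; for even $\ell \ge 2$, pass to the face $\{x_{\{n-1,n\}}=1\}$, apply Lemma~\ref{lem:boundForFace} to reduce to the odd case for $\PMatch{\ell-1}{n-2}$, then apply Lemma~\ref{lem:binomialAsymptot}; for $\ell = 1$ the bound $C\binom{n}{0} = C$ is trivial since any extension of a nontrivial polytope has at least one facet. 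The main obstacle I anticipate is the clean bookkeeping in step (i): making precise that a coordinate-symmetric subspace extension with few \emph{variables} forces a symmetric extension with few \emph{facets}, i.e., verifying that after invoking Lemma~\ref{lem:subspaceExt} one may assume the subspace extension is pointed (lineality space $\{0\}$) — which is exactly the parenthetical remark following Lemma~\ref{lem:subspaceExt} that "the minimum size of a symmetric extension is attained by a pointed symmetric extension" — so that $d \le f$ and Theorem~\ref{thm:lowerbound}'s variable bound becomes a facet bound. The face-transfer step (ii) is routine given Lemma~\ref{lem:boundForFace}, but one must double-check the identification of $\symGr{n-2}$ as a subgroup of $\symGr{n}$ and that the relevant face is genuinely cut out by a single valid equation so that it is a face in the polytopal sense.
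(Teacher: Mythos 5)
Your overall route is the paper's own: for odd $\ell$, combine Theorem~\ref{thm:lowerbound} with Lemma~\ref{lem:subspaceExt}; for even $\ell\ge 2$, pass to the face of $\PMatch{\ell}{n}$ defined by $x_e=1$ for a single edge $e$ (which is valid, hence a face), identify it with $\PMatch{\ell-1}{n-2}$, transfer the bound via Lemma~\ref{lem:boundForFace} with $\symGr{n-2}$ embedded in $\symGr{n}$ as the pointwise stabilizer of the two special nodes, and then repair $\binom{n-2}{(\ell-2)/2}$ to $C\binom{n}{\lfloor(\ell-1)/2\rfloor}$ via Lemma~\ref{lem:binomialAsymptot} with $b=2$, absorbing $\ell=1$ and the finitely many small $n$ into the constant. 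All of that part is correct and is essentially verbatim what the paper does.

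The genuine problem is the bridge in your step (i) from the \emph{variable} bound of Theorem~\ref{thm:lowerbound} to the \emph{facet} bound claimed in the theorem. You justify it by asserting that the size of a subspace extension is at least its ambient dimension minus the dimension of its lineality space, and, ``more simply,'' that a pointed subspace extension has at most as many variables as facets. Both assertions are false: the set $Q=\setDef{y\in\RR^d}{y_1+y_2=1,\ y_j=y_1\text{ for all }3\le j\le d,\ y\ge\zeroVec{}}$ is a pointed subspace extension (a segment) with exactly two facets and $d$ variables, for arbitrarily large $d$; pointedness gives no control whatsoever over the ambient dimension. So as written, your step (i) does not convert Theorem~\ref{thm:lowerbound} into a bound on the number of facets. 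The conclusion you want is nevertheless true, but for a different reason, namely the construction inside the proof of Lemma~\ref{lem:subspaceExt}: the coordinate-symmetric subspace extension it produces is the slack representation $\slackRep{Q}$, which lives in $\RR^f$ where $f$ is the number of facets of the given symmetric extension, so its number of variables \emph{equals} $f$ by construction. With that observation, a symmetric extension of size $f\le\binom{n}{(\ell-1)/2}$ would yield a coordinate-symmetric subspace extension of $\PMatch{\ell}{n}$ with at most $\binom{n}{(\ell-1)/2}$ variables, contradicting Theorem~\ref{thm:lowerbound}. This is a local repair, but it requires appealing to the proof (the ambient space of the slack representation), not merely to the statement of Lemma~\ref{lem:subspaceExt} plus pointedness, which is where your argument would fail.
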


\begin{proof}
	For odd~$\ell$, this follows from Theorem~\ref{thm:lowerbound} using Lemma~\ref{lem:subspaceExt}. For even~$\ell$, the polytope~$\PMatch{\ell-1}{n-2}$ is (isomorphic to) a face of~$\PMatch{\ell}{n}$ defined by  $x_e= 1$ for an arbitrary edge~$e$ of~$K_n$.
	From this, as $\ell-1$ is odd (and not larger than $(n-2)/2$) with $\lfloor(\ell-2)/2\rfloor=\lfloor(\ell-1)/2\rfloor$, and due to Lemma~\ref{lem:binomialAsymptot}, the theorem follows by Lemma~\ref{lem:boundForFace}.
\end{proof}

For even~$n$ and $\ell=n/2$, Theorem~\ref{thm:lbgeneral} provides a similar bound to Yannakakis  result (see Step~2 in the proof of~\cite[Theorem~1]{Yan91}) that no  coordinate-symmetric subspace extension of the perfect matching polytope of~$K_n$ has a number of variables that is bounded by $\binom{n}{k}$ for any $k<n/4$.

  Theorem~\ref{thm:lbgeneral} in particular implies that 
the size
 of every symmetric extension for $\PMatch{\ell}{n}$ with $\Omega(\log n)\le \ell\le n/2$ 
is bounded from below by $n^{\Omega(\log n)}$, which has the following consequence (due to Lemma~\ref{lem:symExtFormToSymExt}). 
\begin{corollary}\label{cor:noCompactSymMatch}
	For $\Omega(\log n)\le \ell\le n/2$, there is no compact extended formulation for $\PMatch{\ell}{n}$ that is symmetric 
(with respect to the group~$G=\symGr{n}$ acting via permuting the nodes of~$K_n$ as described in the Introduction).  
\end{corollary}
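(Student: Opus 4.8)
The plan is to read the corollary off Theorem~\ref{thm:lbgeneral} via Lemma~\ref{lem:symExtFormToSymExt}; what remains after that is a routine binomial estimate together with an unwinding of the definition of \emph{compact}. So this is really a short derivation, with all the substance sitting upstream.

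First I would set up the contradiction. Suppose the family $\PMatch{\ell}{n}$ (with $\ell=\ell(n)$ chosen so that $\Omega(\log n)\le\ell\le n/2$) admitted a compact family of symmetric extended formulations, and fix such a formulation for a given large $n$. By Lemma~\ref{lem:symExtFormToSymExt} this formulation defines a coordinate-symmetric — hence in particular isometry-symmetric — extension of $\PMatch{\ell}{n}$ with respect to the node-permutation action of $\symGr{n}$, and by the remark following the definition of extended formulations the size of the formulation (its number of inequalities) is at least the size (number of facets) of that extension. Applying Theorem~\ref{thm:lbgeneral} then bounds the size of the formulation from below by $C\cdot\binom{n}{\lfloor(\ell-1)/2\rfloor}$ for the absolute constant $C>0$ from that theorem.

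Next I would check that this lower bound is superpolynomial in $n$. Put $k=\lfloor(\ell-1)/2\rfloor$. From $\ell\ge c_0\log n$ one gets $k\ge c_1\log n$ for a suitable constant $c_1>0$ and all large $n$, while $k\le n/4$. Since $\binom{n}{k}$ is nondecreasing in $k$ for $k\le n/2$ and $\binom{n}{k}\ge(n/k)^{k}$, we obtain $\log\binom{n}{k}\ge c_1\log n\cdot\bigl(\log n-\log\log n-O(1)\bigr)=\Omega\bigl((\log n)^2\bigr)$, i.e.\ $\binom{n}{k}=n^{\Omega(\log n)}$, which exceeds every fixed polynomial in $n$ for $n$ large. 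Finally I would match this against compactness: the ambient dimension of $\PMatch{\ell}{n}$ is $m=|E_n|=\binom{n}{2}=\Theta(n^2)$, and all its vertices are $0/1$-vectors, so the maximal encoding length of a vertex coordinate is $O(1)$; compactness would therefore force the formulation size to be polynomial in $m$, hence polynomial in $n$, contradicting the previous estimate. Hence no such compact symmetric extended formulation exists.

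I do not expect a genuine obstacle in this argument itself: the entire weight of the corollary rests on Theorem~\ref{thm:lbgeneral}, and ultimately on Theorem~\ref{thm:lowerbound}, whose proof — a Yannakakis-style construction of a coordinate-symmetric section via Lemma~\ref{lem:suffCondSymExt}, combined with exhibiting an explicit solution of the infeasible system of Proposition~\ref{prop:lambda} for a suitable valid inequality, using the isotropy-index bound of Lemma~\ref{lem:orderIsoGr} — is the substantive part and lies beyond this excerpt. Within the corollary itself the only points needing a little care are keeping the dependence $\ell=\ell(n)$ explicit throughout and verifying that the exponent $\Omega((\log n)^2)$ really does dominate every polynomial in $n$.
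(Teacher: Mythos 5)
Your proposal is correct and follows essentially the same route as the paper: the paper also obtains the corollary by combining Theorem~\ref{thm:lbgeneral} with Lemma~\ref{lem:symExtFormToSymExt} and observing that $C\cdot\binom{n}{\lfloor(\ell-1)/2\rfloor}=n^{\Omega(\log n)}$ is superpolynomial for $\ell\ge\Omega(\log n)$, contradicting compactness. Your explicit binomial estimate and the remark that vertex coordinates are $0/1$ just spell out details the paper leaves implicit.
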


The rest of this section is devoted to prove Theorem~\ref{thm:lowerbound}. Throughout, with $\ell=2k+1$, we assume that $Q\subseteq\RR^d$ with $d\le\binom{n}{k}$ is a  coordinate-symmetric subspace extension of~$\PMatch{2k+1}{n}$ for $4k+2\le n$. We will only consider the case $k\ge 1$, as for $\ell=1$ the theorem trivially is true (note that we restrict to $n\ge 3$). 
Coordinate-symmetry is meant with respect to the action of $G=\symGr{n}$  on $\PMatch{2k+1}{n}$ and on the set~$X$ of vertices of $\PMatch{2k+1}{n}$ as described in the introduction, and we assume~$s:X\rightarrow Q$ to be a coordinate-symmetric section as guaranteed to exist by Lemma~\ref{lem:suffCondSymExt}. 
Thus, we have 
\begin{equation*}
	X=\setDef{\charVec{M}\in\{0,1\}^{E_n}}{M\in\match{2k+1}{n}}\,,
\end{equation*}
where $\match{2k+1}{n}$ is the set of all matchings~$M\subseteq E_n$ with $|M|=2k+1$ in the complete graph~$K_n=(V,E_n)$ (with $V=\ints{n}$), and 
\begin{equation*}
	(\pi.\charVec{M})_{\{v,w\}}=\charVec{M}_{\{\pi^{-1}(v),\pi^{-1}(w)\}}
\end{equation*}
holds for all $\pi\in \symGr{n}$, $M\in\match{2k+1}{n}$, and $\{v,w\}\in E_n$.
In order to simplify notations, we will sometimes identify matchings with their characteristic vectors, e.g., we write~$s(M)$ instead of~$s(\charVec{M})$ for $M\in\match{2k+1}{n}$, and we consider the action of~$\symGr{n}$ on~$\match{2k+1}{n}$.

The proof will  proceed by constructing a solution~$\lambda\in\RR^{\match{2k+1}{n}}$ to the system~\eqref{eq:lambda:1}, \eqref{eq:lambda:2}  with respect to the inequality $x(E(V_{\star}))\le k$ (valid for $\PMatch{2k+1}{n}$) for some arbitrarily chosen subset $V_{\star}\subseteq V$ of $|V_{\star}|=2k+1$ nodes. 
 
In order to determine such a  $\lambda\in\RR^{\match{2k+1}{n}}$, 
we choose an arbitrary subset $V^{\star}\subseteq V\setminus V_{\star}$ 
of cardinality $|V^{\star}|=|V_{\star}|=2k+1$ 
disjoint from~$V_{\star}$ and denote for all 
$i\in\oddints{2k+1}=\{1,3,5,\dots,2k+1\}$
\begin{equation*}
	\mathcal{M}^{\star}_i=\setDef{M\in\match{2k+1}{n}}{M\subseteq E(V_{\star}\cup V^{\star}), |M\cap(V_{\star}:V^{\star})|=i}\,,
\end{equation*}
as well as $\mathcal{M}^{\star}=\mathcal{M}^{\star}_1\cup\mathcal{M}^{\star}_3\cup\dots\cup\mathcal{M}^{\star}_{2k+1}$ (the set of all perfect matchings on the $4k+2$ nodes in $V_{\star}\cup V^{\star}$). 
In fact, we will construct a vector $\lambda\in\RR^{\match{2k+1}{n}}$ with
\begin{equation*}
	\lambda_M=
	\begin{cases}
		\lambda_i & \text{if }M\in \mathcal{M}^{\star}_i\\
		0         & \text{if }M\not\in\mathcal{M}^{\star}
	\end{cases} 
\end{equation*}
for some values $\lambda_1,\lambda_3,\dots,\lambda_{2k+1}\in\RR$ to be determined.  

The equation~\eqref{eq:lambda:2} to be satisfied now easily reads
\begin{equation}\label{eq:lambda:2:match}
	\sum_{i\in\oddints{2k+1}}\frac{i-1}{2}\cdot|\mathcal{M}^{\star}_i|\cdot \lambda_i=-1\,,
\end{equation}
while~\eqref{eq:lambda:1}, for the time being, remains
\begin{equation}\label{eq:lambda:1:match}
	\sum_{i\in\oddints{2k+1}}\sum_{M\in\mathcal{M}^{\star}_i} s_j(M)\cdot\lambda_i  \ge   0 \quad\text{for all }j\in\ints{d}\,.
\end{equation}

We are now going to simplify~\eqref{eq:lambda:1:match} by means of~\eqref{eq:isoInvariant} by identifying suitable (large) subgroups of~$\isoGr{G}{s_j}$. 
Here, the crucial ingredient  will be a  result (formulated in Lemma~\ref{lem:Yannakakis-claim}) on subgroups of the symmetric group~$\symGr{n}$, where~$\altGr{n}\subseteq\symGr{n}$ is the \emph{alternating group} formed by all even permutations of~$\ints{n}$. This result is Claim~2 in the proof of Thm.~1 of Yannakakis paper~\cite{Yan91}. His proof (which we work out below in order to make the presentation self contained at this crucial point) relies on a theorem of Bochert's~\cite{Boc89} stating that any subgroup $U$ of~$\symGr{m}$ that acts primitively on~$\ints{m}$ 
\volker{%
(i.e. the action is transitive and there is no $Y\subseteq \ints{m}$ with $1<\sabs{Y}<m$ for which $Y\cap\sigma.Y\in\{Y,\varnothing\}$ holds for all $\sigma\in U$)  contains~$\altGr{m}$ or has index at least $\lfloor (m+1)/2\rfloor!$. 
}

In the proof of Lemma~\ref{lem:Yannakakis-claim}, we use the following estimate on products of binomial coefficients. 

\begin{lemma}\label{lem:prodBinCoeff}
	For all positive integer numbers  $a,b,c_1,\dots,c_r\in\NN\setminus\{0\}$  with
	\begin{equation}\label{eq:prodBinCoeff:1}
		\max\{c_1,\dots,c_r\}\le \max\{a,b\}
		\qquad\text{and}\qquad
		\sum_{i=1}^r (c_i-1)\le a+b-2\,,
 	\end{equation}
	we have
	\begin{equation}\label{eq:prodBinCoeff:2}
		\prod_{i=1}^r c_i!\ \le\ a!\cdot b!\,.
 	\end{equation}
	If any of the inequalities in~\eqref{eq:prodBinCoeff:1} additionally is strict, then~\eqref{eq:prodBinCoeff:2} is strict as well.
\end{lemma}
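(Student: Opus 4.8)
The plan is to reduce the multi-factor inequality $\prod_{i=1}^r c_i! \le a!\,b!$ to repeated applications of a single two-factor move, and then to a genuinely one-dimensional estimate. First I would record the elementary inequality that for positive integers $p \le q$ one has $p!\,q! \le (p-1)!\,(q+1)!$, with strict inequality precisely when $p \ge 2$; this follows by cancelling common factorials and observing it is equivalent to $p \le q+1$. This is the only arithmetic fact needed, and it says that ``spreading apart'' a pair of factorial arguments (while keeping the sum of the arguments fixed) never decreases the product.

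Next I would set up an induction on $\sum_i (c_i - 1)$, or equivalently argue by a finite ``balancing'' process. Starting from the multiset $\{c_1,\dots,c_r\}$, as long as there are at least two entries exceeding $1$, pick the smallest such entry $c_i$ and the largest entry $c_j$ (so $c_i \le c_j$ and $c_i \ge 2$), and replace the pair $(c_i,c_j)$ by $(c_i - 1,\, c_j + 1)$. By the two-factor move the product of factorials does not decrease (and strictly increases, since $c_i \ge 2$). Each such step reduces the number of entries that are $\ge 2$ or leaves it the same while pushing mass toward a single large entry; since $\sum_i(c_i-1)$ is invariant and the entries are bounded, the process terminates. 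When it stops, at most one $c_i$ exceeds $1$ — say the multiset has become $\{1,1,\dots,1,m\}$ with $m - 1 = \sum_i (c_i - 1) \le a + b - 2$, hence $m \le a+b-1$. At this point $\prod c_i! = m!$, and I must show $m! \le a!\,b!$. Since $m \le a+b-1 < a+b$ and (WLOG $a \le b$, so $a \ge 1$) we have $a!\,b! \ge 1!\cdot (a+b-1)! = (a+b-1)! \ge m!$ using that $1!\,(a+b-1)!$ is obtained from $a!\,b!$ by the reverse balancing move; more directly, $\binom{a+b-1}{a-1} \ge 1$ gives $(a+b-1)! \le (a-1)!\,b! \le a!\,b!$. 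Chaining the (non-strict) balancing steps with this final bound yields $\prod c_i! \le m! \le a!\,b!$.

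For the strictness addendum: if $\sum_i(c_i - 1) \le a + b - 2$ is strict, i.e. $\sum_i(c_i-1) \le a+b-3$, then after balancing $m \le a+b-2$, and one more reverse step shows $m! \le (a+b-2)! \le (a-1)!\,b! \le a!\,b!$ with the last inequality strict when $a \ge 2$; a small separate check handles $a = 1$ (then $b \ge m$, and $m < a+b-1 = b$ unless... — actually $a=1$ forces $\max c_i \le b$ and $\sum(c_i-1)\le b-2$, so $m \le b-1 < b$, giving $m! < b! = a!\,b!$). If instead $\max\{c_1,\dots,c_r\}\le\max\{a,b\}$ is strict, i.e. every $c_i < b$ (taking $b = \max\{a,b\}$), then during balancing the large entry $c_j$ only exceeds $b$ once it has absorbed strictly more than it started with; I would argue that the very first balancing step already gives strict increase (some $c_i \ge 2$ exists unless all $c_i = 1$, in which case $\prod c_i! = 1 < a!\,b!$ already since $a,b \ge 1$ and not both can be $1$ when some $c_i < b$ forces $b \ge 2$), so the overall inequality is strict.

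The main obstacle I anticipate is bookkeeping the termination and the strictness cases cleanly rather than any deep difficulty: one must be careful that the ``smallest entry $\ge 2$, largest entry'' selection keeps the hypothesis ``$c_i \le c_j$'' valid at each step (it does, since we always move mass from a smaller to a larger entry), and that the edge cases where all $c_i = 1$, or where $r = 1$, or where $a = 1$, are handled — these are exactly the places where the strict/non-strict dichotomy is delicate. Everything else is the single cancellation identity $p!\,q! \le (p-1)!\,(q+1)!$ applied repeatedly, so I would keep the write-up short and push all the arithmetic into that one lemma.
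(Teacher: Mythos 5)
Your reduction has a genuine gap at its final step, and it traces back to the fact that you never use the first hypothesis $\max\{c_1,\dots,c_r\}\le\max\{a,b\}$. The unconstrained balancing move $(c_i,c_j)\mapsto(c_i-1,c_j+1)$ preserves $\sum_i(c_i-1)$ but destroys the cap on the entries, and the terminal claim ``$m\le a+b-1$ implies $m!\le a!\,b!$'' is simply false: take $a=b=2$ and $m=3$, where $3!=6>4=2!\cdot 2!$. Concretely, starting from the admissible data $c_1=c_2=2$, $a=b=2$ (where the lemma holds with equality), your process produces the multiset $\{1,3\}$ and then asserts $3!\le 2!\,2!$, which is wrong; the chain $\prod_i c_i!\le m!\le a!\,b!$ breaks at the second link. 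The binomial-coefficient justification is also inverted: $\binom{a+b-1}{a-1}\ge 1$ gives $(a+b-1)!\ge (a-1)!\,b!$, not $\le$. Note that the cap hypothesis is essential and not a technicality --- with $c_1=3$, $a=b=2$ one has $\sum_i(c_i-1)=a+b-2$ but $\prod_i c_i!>a!\,b!$ --- so any correct argument must keep it in play throughout. (A smaller slip: $p!\,q!<(p-1)!\,(q+1)!$ holds for every $1\le p\le q$, not only for $p\ge 2$; equality would require $p=q+1$.)

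The balancing idea itself can be repaired: stop feeding the largest entry once it reaches $M=\max\{a,b\}$, freeze it, and continue balancing the remaining entries. Since one frozen entry consumes $M-1$ of the budget $\sum_i(c_i-1)\le a+b-2$, the leftover budget is at most $\min\{a,b\}-1$, so the terminal configuration has at most two non-trivial entries, bounded by $\max\{a,b\}$ and $\min\{a,b\}$ respectively, giving $\prod_i c_i!\le a!\,b!$; strictness can then be tracked through the (always strict) balancing steps and the final comparison. For contrast, the paper avoids any rebalancing: it writes both sides as ordered lists of their non-trivial factors ($2,3,\dots,c_i$ for each $i$ on the left, $2,\dots,a$ and $2,\dots,b$ on the right), notes the left list has $p=\sum_i(c_i-1)$ terms versus $q=a+b-2$ on the right with entries capped by $\max\{c_i\}\le\max\{a,b\}$ and multiplicities at most $r$ versus at most $2$, and compares the two sorted sequences termwise. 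Both routes are elementary, but the termwise comparison uses the cap hypothesis directly where your current write-up silently discards it.
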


\begin{proof}
	Let $2= x_1\le x_2\le \dots\le x_p$ and $2= y_1\le y_2\le \dots\le y_q$ be the ordered sequences of non-trivial factors (appearing with their multiplicities) on the left- and right-hand-side, respectively, of~\eqref{eq:prodBinCoeff:2}  (with $p=\sum_{i=1}^r (c_i-1)$ and $q=a+b-2$). Clearly, the two sequences are of the form $(1,\dots,1,2,\dots,2,\dots,d\dots d)$ with $d=\max\{c_1,\dots,c_r\}$ for the $x$- and $d=\max\{a,b\}$ for the $y$-sequence, as well as 
	  non-increasing multiplicities from $\{1,\dots,r\}$ and $\{1,2\}$, respectively.
		Due to the second inequality in~\eqref{eq:prodBinCoeff:1}, we have $p\le q$.
	
	If $x_i\le y_i$ holds for all~$i\in\ints{p}$, the statements to prove clearly are true. Otherwise, 
	 defining~$i^{\star}=\min\setDef{i\in\ints{p}}{x_i> y_{i}}-1$ ($\infty$ if the set is empty), we 
	find that the multiplicity of each~$x_i$ with $i>i^{\star}$ must be one (as the multiplicities are non-increasing and the multiplicities of  $y_1,\dots,y_{i^{\star}}$ are at most two). The left-hand-side of~\eqref{eq:prodBinCoeff:2} thus equals 
		\begin{equation*}
			x_1x_2\cdots x_{i^{\star}}\cdot (x_{i^{\star}}+1)(x_{i^{\star}}+2)\cdots x_p\,,
		\end{equation*}
	and the right-hand-side of~\eqref{eq:prodBinCoeff:2} is at least
	\begin{equation*}
		y_1y_2\cdots y_{i^{\star}}\cdot (y_{i^{\star}}+1)(y_{i^{\star}}+2)\cdots \max\{a,b\}\,,
	\end{equation*}
	This proves the statements of the lemma, as  $x_p\le\max\{a,b\}$ holds, and, in case of $i^{\star}<p$, we have $x_{i^{\star}}=y_{i^{\star}}$.

%
%

\end{proof}

\begin{lemma}\label{lem:Yannakakis-claim}
	 For each subgroup~$U$ of~$\symGr{n}$ with $(\symGr{n}:U)\le\binom{n}{k}$ for  $1\le k<\frac{n}{4}$, there is some  $W\subseteq\ints{n}$ with $|W|\le k$ such that 
	\begin{equation}\label{eq:Yannakakis-claim}
		\setDef{\pi\in
		  \altGr{n}}{\pi(v)=v\text{ for all }v\in W}\subseteq U
	\end{equation}
	holds.	
\end{lemma}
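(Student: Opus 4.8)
\textbf{Proof plan for Lemma~\ref{lem:Yannakakis-claim}.}
The plan is to argue by induction on~$n$, following Yannakakis, with Bochert's theorem as the engine that controls primitive actions. First I would dispose of the trivial cases: if~$k=0$ there is nothing to prove (take $W=\varnothing$), and the hypothesis $1\le k<n/4$ in particular forces $n$ large enough that $\altGr{n}$ is simple, which we will want when we pin down subgroups of $\symGr{n}$ containing a large alternating subgroup. The heart of the argument is a dichotomy on how~$U$ sits inside~$\symGr{n}$: either~$U$ is "big" in a structural sense (it contains the pointwise stabilizer of a small set), in which case we are done directly, or it is "small", in which case its index is too large to satisfy $(\symGr{n}:U)\le\binom{n}{k}$, a contradiction.

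To make the dichotomy precise I would consider the natural action of~$U$ on~$\ints{n}$ and split into three cases. Case~1: $U$ is intransitive. Then~$U$ preserves a partition of~$\ints{n}$ into orbits; let~$Y$ be a smallest orbit, of size $t\le n/2$. The index of~$U$ in~$\symGr{n}$ is then at least the index of the full stabilizer $\symGr{Y}\times\symGr{\ints{n}\setminus Y}$, which is $\binom{n}{t}$; more carefully one should bound $(\symGr n:U)$ below by a product of multinomial-type factors $n!/\prod|O_i|!$ over the orbits~$O_i$, and here is exactly where Lemma~\ref{lem:prodBinCoeff} enters — it lets us compare $\prod|O_i|!$ against $k!\,(n-k)!$ and conclude that unless $t\le k$ we get $(\symGr n:U)>\binom nk$. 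So either we reach a contradiction, or some orbit has size $\le k$; restricting attention to the (setwise stabilized) union of the small orbits and recursing on the complementary action gives the desired $W$ of size $\le k$. Case~2: $U$ is transitive but imprimitive. Then~$U$ preserves a nontrivial block system with blocks of size~$b$, $1<b<n$, so $U\le \symGr{b}\wr\symGr{n/b}$, whose index in~$\symGr n$ is $n!/(b!^{n/b}(n/b)!)$; again Lemma~\ref{lem:prodBinCoeff} (applied to the factorials $b!^{n/b}$ and $(n/b)!$ against $k!\,(n-k)!$) shows this index exceeds $\binom nk$ whenever $1\le k<n/4$, a contradiction — so this case cannot occur. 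Case~3: $U$ is primitive. By Bochert's theorem, either $U\supseteq\altGr n$, in which case~\eqref{eq:Yannakakis-claim} holds with $W=\varnothing$, or $(\symGr n:U)\ge\lfloor(n+1)/2\rfloor!$; since $\lfloor(n+1)/2\rfloor!>\binom nk$ for $k<n/4$ and $n$ large, this again contradicts the index hypothesis.

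In the intransitive case I must be slightly careful about how the recursion bookkeeping works: after peeling off the small orbits (whose union~$Z$ has $|Z|\le$ total, but I want $|W|\le k$, not $|Z|\le k$), I should argue as follows. Let~$Z$ be the union of all $U$-orbits of size $\le k$... actually the cleaner route, and the one I expect the paper takes, is: among the orbits, if \emph{every} orbit has size~$1$ then $U=\{\id\}$ and $(\symGr n:U)=n!>\binom nk$, contradiction; otherwise let $Y$ be an orbit with $2\le|Y|$. If $|Y|\ge k+1$, then by a careful application of Lemma~\ref{lem:prodBinCoeff} to the orbit-size factorials the index is $>\binom nk$; hence every orbit has size $\le k$, whence also the number of orbits is $\ge n/k>4$, and one continues by an inductive argument on the action of~$U$ on~$\ints n$ minus one fixed orbit, or directly: the pointwise stabilizer in~$\symGr n$ of $W:=\ints n\setminus(\text{one orbit of max size})$... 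This is the step I expect to be the main obstacle — getting the induction/recursion set up so that the $W$ that comes out genuinely has $|W|\le k$ and~\eqref{eq:Yannakakis-claim} holds for the \emph{original} $U$, rather than merely for a restriction of it. The resolution is that a permutation fixing $W$ pointwise and even permutes $\ints n$, restricted to $\ints n\setminus W$, and since $U$ contains (by induction) all such permutations supported on the complement of a small set, and since $\altGr n$ is generated by $3$-cycles which can be taken supported on any fixed $3$-element subset of $\ints n\setminus W$, one checks that the pointwise stabilizer of $W$ inside $\altGr n$ is actually generated by elements already known to lie in $U$. The simplicity of $\altGr n$ (valid for $n\ge 5$, guaranteed by $k\ge1$, $n>4k\ge4$, hence $n\ge5$) is what makes "$U$ contains a big alternating subgroup" upgrade to "$U$ contains the full pointwise stabilizer in $\altGr n$". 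I would write the induction on~$n$ with the three cases above, invoking Lemma~\ref{lem:prodBinCoeff} in Cases~1 and~2 and Bochert's theorem in Case~3, and close with the $\altGr n$-simplicity argument to assemble~\eqref{eq:Yannakakis-claim}.
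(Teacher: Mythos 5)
Your plan shares ingredients with the paper's argument (Bochert's theorem, the factorial estimate of Lemma~\ref{lem:prodBinCoeff}, counting via orbits and blocks), but as written it has two genuine gaps, both located exactly where you yourself flag trouble. First, the intransitive case points in the wrong direction. The hypothesis $(\symGr{n}:U)\le\binom{n}{k}$, i.e.\ $|U|\ge k!\,(n-k)!$, combined with $|U|\le\prod_i|B_i|!$ over the orbits $B_1,\dots,B_q$ and Lemma~\ref{lem:prodBinCoeff}, forces the existence of \emph{one orbit $B$ of size at least $n-k$}; the scenario ``every orbit has size $\le k$, so there are $\ge n/k$ orbits, continue recursively'' that you try to analyse cannot occur at all, because then $\prod_i|B_i|!$ would be far below $k!\,(n-k)!$. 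The paper needs no induction on~$n$ and no peeling of small orbits: it simply takes $W=\ints{n}\setminus B$ with $|W|\le k$ and works on $B$.

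Second, and more seriously, the ``upgrade'' step you defer is the heart of the lemma and your sketch does not deliver it. Applying Bochert to $U$ acting on $\ints{n}$ (your primitive case) or to the image of $U$ on the big orbit only yields information about images of $U$, whereas \eqref{eq:Yannakakis-claim} requires honest elements of $U$ that fix $W$ \emph{pointwise}; knowing that the image of $U$ in $\symGr{B}$ contains $\altGr{B}$ does not by itself produce such elements, and your appeal to simplicity of $\altGr{n}$ and $3$-cycles is not carried out (nor is it needed in the paper). The paper's device is to pass from the start to the group $F=\varphi_B(\ker(\varphi_W))$, the image on $B$ of the pointwise stabilizer of $W$ inside $U$, and to verify \emph{for $F$ itself} the two hypotheses of Bochert's theorem: primitivity of the action on $B$ (via: $U$ is primitive on $B$ because a block system would make $|U|$ too small by Lemma~\ref{lem:prodBinCoeff}; $\ker(\varphi_W)$ is a nontrivial normal subgroup, hence transitive on $B$; and blocks for $F$ again contradict Lemma~\ref{lem:prodBinCoeff}), and the index bound $(\symGr{B}:F)<\lfloor(|B|+1)/2\rfloor!$ coming from $|F|=|\ker(\varphi_W)|\ge(n-k)!$. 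Then $\altGr{B}\subseteq F$ immediately translates into \eqref{eq:Yannakakis-claim}, since an element of $\ker(\varphi_W)$ restricting on $B$ to a prescribed even permutation \emph{is} that permutation of $\ints{n}$ fixed on $W$. To repair your write-up you would have to replace the induction and the trichotomy on $U$ by this single-orbit analysis and rerun the primitivity/index estimates for the stabilizer image $F$ rather than for $U$.
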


\begin{proof}
	Let~$1\le k<\frac{n}{4}$ and~$U$ 
	be a subgroup  of~$\symGr{n}$ with $(\symGr{n}:U)\le\binom{n}{k}$, i.e.,
	\begin{equation}\label{eq:Uknk}
		|U|\ge k!\cdot (n-k)!\,.
	\end{equation}
	Under the action of~$U$ on the set~$\ints{n}$, there is some orbit $B\subseteq\ints{n}$ of size $|B|\ge n-k$. Indeed, this follows from~\eqref{eq:Uknk} and  Lemma~\ref{lem:prodBinCoeff}, as, for the partitioning  $\ints{n}=B_1\cup\cdots\cup B_q$ of~$\ints{n}$ into orbits, we have $|B_1|!\cdots|B_q|!\ge |U|$. We will show that $W=\ints{n}\setminus B$ (with $|W|\le k$) satisfies~\eqref{eq:Yannakakis-claim}. 
	
	Every $\pi\in\symGr{n}$ induces two permutations $\pi_W\in\symGr{W}$ and $\pi_B\in\symGr{B}$ (where we denote by $\symGr{L}$ and $\altGr{L}$ the set of all, respectively all even,  permutations of a subset $L\subseteq\ints{n}$). With the group homomorphisms $\varphi_B:U\rightarrow \symGr{B}$ and $\varphi_W:U\rightarrow \symGr{W}$ defined via $\varphi_B(\pi)=\pi_B$ and $\varphi_W(\pi)=\pi_W$ for all $\pi\in U$, we define 
	\begin{equation*}
		F=\varphi_B(\ker(\varphi_W))\,.
	\end{equation*}
It suffices to show $\altGr{B}\subseteq F$, which in turn follows by the above mentioned theorem of Bochert's~\cite{Boc89} (see, e.g., \cite[Thm.~14.2]{Wie64}), by establishing the following two statements:
\begin{enumerate}
	\item $F$ acts primitively on~$B$.
	\item $(\symGr{B}:F)<\lfloor\tfrac{|B|+1}{2}\rfloor!$
\end{enumerate}
In order to show~(1), we first show that $\ker(\varphi_W)$ (and thus its isomorphic image~$F$) acts transitively on~$B$. 
For this, we use the following fact (see, e.g.,~\cite[Prop.~7.1]{Wie64}): If an action of a group~$G$ is primitive, then the induced action of every normal subgroup~$N$ of~$G$ with $|N|>1$ is transitive. 
Choosing $G=U$ and $N=\ker(\varphi_W)$, we find that~$G=U$ acts primitively on~$B$, since, clearly, the action of~$U$ on the orbit~$B$ is transitive, and a non-trivial decompostion of~$B$ into blocks $B=B_1\cup\cdots\cup B_r$ of imprimitivity (i.e., for each $\pi\in G$, we have $\pi.B_i=B_i$ or $\pi.B_i\cap B_i=\varnothing$) with $r\ge 2$ and $|B_1|=\cdots=|B_r|=b\ge 2$   would imply
$r!\cdot(b!)^r\ge|U|\ge k!\cdot(n-k)!$, contradicting Lemma~\ref{lem:prodBinCoeff} (due to $r,b\le\frac{|B|}{2}\le\frac{n}{2}<n-k$). As~$\ker(\varphi_W)$ is normal in~$U$ and we have $k!\cdot (n-k)!\le|U|=|\im(\varphi_W)|\cdot|\ker(\varphi_W)|$ with $|\im(\varphi_W)|\le |W|!\le k!$, we have~$|\ker(\varphi_W)|>1$. Thus~$F$ acts transitively on~$B$. Similarly to the argument used above,  a non-trivial decomposition of~$B$ into blocks of imprimitivity under the action of~$F$ would yield $r!\cdot(b!)^r\ge|F|$ with $r,b\le\frac{n}{2}<n-k$, thus $r!\cdot(b!)^r\cdot{|W|!}\ge|U|\ge k!\cdot (n-k)!$ (with $|W|\le k<n-k$), again contradicting Lemma~\ref{lem:prodBinCoeff}. Hence~(1) is established.

Hence, it remains to prove~(2). From $|U|=|F|\cdot|\im(\varphi_W)|$ we deduce $|F|\ge (n-k)!$ via~\eqref{eq:Uknk} and $|W|\le k!$. Thus, it suffices to show
\begin{equation*}
	|B|(|B|-1)\cdots(n-k+1)\ <\ \lfloor\tfrac{|B|+1}{2}\rfloor!\,.
\end{equation*}
Obviously, it suffices to establish this equation for the maximal possible cardinality~$|B|=n$ and the maximal~$k$ with $k<\frac{n}{4}$. Therefore, we have to prove
\begin{equation}\label{eq:Y-claim-final}
	n(n-1)\cdots(n-k+1)\ <\ \lfloor\tfrac{n+1}{2}\rfloor!
\end{equation}
with $n=4q+r$ (for $q,r\in\NN$, $r<4$) and 
\begin{equation*}
	k=\begin{cases}
		q   & \text{if }r\ne 0\\
		q-1 & \text{if }r=0\,.
	\end{cases}
\end{equation*} 
In both cases, we have $2k<\lfloor\tfrac{n+1}{2}\rfloor=\lfloor2q+\tfrac{r+1}{2}\rfloor$, thus the right-hand-side of~\eqref{eq:Y-claim-final} has at least twice as many non-trivial factors as the left-hand-side, which, due to $n\le 2 \lfloor\tfrac{n+1}{2}\rfloor \le 3(\lfloor\tfrac{n+1}{2}\rfloor-1)\le\dots$ (as long as the first factor does not exceed the second one) establishes~\eqref{eq:Y-claim-final}.
\end{proof}

Having established Lemma~\ref{lem:Yannakakis-claim}, 
we can now continue with the proof of Theorem~\ref{thm:lowerbound}.
As we assumed $d\le\binom{n}{k}$ (with $k<\frac{n}{4}$ due to $4k+2\le n$), Lemmas~\ref{lem:orderIsoGr} and~\ref{lem:Yannakakis-claim} imply 
that, for each~$j\in\ints{d}$, there is some subset~$V_j\subseteq V$ of nodes with $|V_j|\le k$ and
\begin{equation*}
	H_j=\setDef{\pi\in
	  \altGr{n}}{\pi(v)=v\text{ for all }v\in V_j}
	\subseteq\isoGr{\symGr{n}}{s_j}\,.
\end{equation*}
Two matchings $M,M'\in\match{2k+1}{n}$  are in the same orbit under the action of the group~$H_j$ if and only if we have
\begin{equation}\label{eq:orbitsHj}
	M\cap E(V_j)=M'\cap E(V_j)\quad\text{and}\quad
	V_j\setminus V(M)=V_j\setminus V(M')\,.
\end{equation}
Indeed, it is clear that~\eqref{eq:orbitsHj} holds if we have~$M'=\pi.M$ for some permutation~$\pi\in H_j$. In turn, if~\eqref{eq:orbitsHj} holds, then there clearly is some permutation~$\pi\in\symGr{n}$ with $\pi(v)=v$ for all $v\in V_j$ and $M'=\pi.M$.
 Due to $|M|=2k+1>|V_j|$ there is some edge $\{u,w\}\in M$ with $u,w\not\in V_j$. Denoting by $\tau\in\symGr{n}$ the transposition of~$u$ and~$w$, we thus also have $\pi\tau(v)=v$ for all $v\in V_j$ and $M'=\pi\tau.M$. As one of the permutations~$\pi$ and~$\pi\tau$ is even, say~$\pi'$, we find $\pi'\in H_j$ and $M'=\pi'.M$, proving that~$M$ and~$M'$ are contained in the same orbit under the action of~$H_j$.

Together with~\eqref{eq:isoInvariant}, the characterization of the orbits of~$H_j$ via~\eqref{eq:orbitsHj} yields that we have the implication
\begin{equation*}
	M\cap E(V_j)=M'\cap E(V_j)
	\quad\Rightarrow\quad
	s_j(M)=s_j(M')
\end{equation*}
for all $j\in\ints{d}$ and $M,M'\in\mathcal{M}^{\star}$ (note that we have $V(M)=V_{\star}\cup V^{\star}$ for all $M\in\mathcal{M}^{\star}$).
Denoting by $\mathcal{A}_j$ the set of all  matchings~$A$ on $V_{\star}\cup V^{\star}$ with $A\cap E(V_{\star}\cup V^{\star}\setminus V_j)=\varnothing$ (thus, $|A|\le |V_j|\le k$) and $V_j\subseteq V(A)$ (thus $s_j(M)=s_j(M')$ for all $M,M'\in\mathcal{M}^{\star}$ with $A\subseteq M$ and $A\subseteq M'$),
we hence find values $s_j(A)$ for all $A\in\mathcal{A}_j$ such that~\eqref{eq:lambda:1:match} becomes
\begin{equation}\label{eq:lambda:1:match:reform}
	\sum_{i\in\oddints{2k+1}}\sum_{A\in\mathcal{A}_j}
		s_j(A)\cdot|\setDef{M\in\mathcal{M}^{\star}_i}{A\subseteq M}|\cdot\lambda_i
		\ge 0\,.
\end{equation}
The crucial observation now is that, for each $A\in\mathcal{A}_j$,
\begin{equation*}
	g_A(i)=|\setDef{M\in\mathcal{M}^{\star}_i}{A\subseteq M}|
\end{equation*}
can be written as $|\mathcal{M}^{\star}_i|$ times a  polynomial of degree at most $|A|\le k$. In order to see this, define
\begin{equation*}
	a_{\star}=|A\cap E(V_{\star})|,\quad
	a^{\star}=|A\cap E(V^{\star})|,\quad
	a_{\star}^{\star}= |A\cap(V_{\star}:V^{\star})|\,,
\end{equation*}
and denote by $\bar{\mathcal{A}}$ the set of all matchings $A'\subseteq E(V_{\star}\cup V^{\star})$ with
\begin{equation*}
	a_{\star}=|A'\cap E(V_{\star})|,\quad
	a^{\star}=|A'\cap E(V^{\star})|,\quad
	a_{\star}^{\star}= |A'\cap(V_{\star}:V^{\star})|\,.
\end{equation*}
For symmetry reasons, we have
\begin{eqnarray*}
	g_A(i) &=& \frac{1}{|\bar{\mathcal{A}}|}\displaystyle\sum_{A'\in\bar{\mathcal{A}}}|\setDef{M\in\mathcal{M}^{\star}_i}{A'\subseteq M}|\\
	&=&\frac{1}{|\bar{\mathcal{A}}|}\displaystyle\sum_{M\in\mathcal{M}^{\star}_i}|\setDef{A'\in\bar{\mathcal{A}}}{A'\subseteq M}|\\
	&=& |\mathcal{M}^{\star}_i|\cdot\frac{1}{|\bar{\mathcal{A}}|}
		\binom{(2k+1-i)/2}{a_{\star}}\cdot \binom{i}{a_{\star}^{\star}}\cdot \binom{(2k+1-i)/2}{a^{\star}}\,,
\end{eqnarray*}
where the product of the three binomial coefficients is a   polynomial in~$i$ of degree $a_{\star}+a_{\star}^{\star}+a^{\star}=|A|\le k$. 

Hence, the left-hand-side of~\eqref{eq:lambda:1:match:reform} equals $s_j(A)\sum_{i\in\oddints{2k+1}}f_j(i)\cdot|\mathcal{M}^{\star}_i|\lambda_i$ with a polynomial~$f_j(i)$ in~$i$ of degree at most~$k$ and $f_j(0)=0$ (note $a_{\star}^{\star}\ge 1$, thus $\binom{0}{a_{\star}^{\star}}=0$). Since the left-hand-side of~\eqref{eq:lambda:2:match} equals $\sum_{i\in\oddints{2k+1}}f_0(i)\cdot|\mathcal{M}^{\star}_i|\lambda_i$ with a polynomial~$f_0(i)$ in~$i$ of degree $1$ ($\le k$) and $f_0(0)=-1$, the following lemma finally concludes the proof (by choosing $I=\oddints{2k+1}$ and $\lambda_i=\gamma_i/|\mathcal{M}^{\star}_i|$).

\begin{lemma}
	For every subset $I\subseteq\RR$ of cardinality $|I|=k+1$ there are numbers $\gamma_i\in\RR$ ($i\in I$) such that
	\begin{equation*}
		\sum_{i\in I}f(i)\cdot\gamma_i=f(0)
	\end{equation*}
	holds for all (univariate) polynomials~$f$ of degree at most~$k$.
\end{lemma}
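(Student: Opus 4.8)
The plan is to use Lagrange interpolation on the node set $I$ together with the distinguished point $0$. Write $I=\{i_1,\dots,i_{k+1}\}$; since $|I|=k+1$, the polynomials $f$ of degree at most $k$ form a vector space of dimension $k+1$, and evaluation at the $k+1$ distinct points of $I$ is a bijection between this space and $\RR^{I}$ (the Vandermonde matrix on $k+1$ distinct nodes is invertible). Hence the linear functional $f\mapsto f(0)$ on this space can be written as a linear combination of the evaluation functionals $f\mapsto f(i)$, $i\in I$; the coefficients $\gamma_i$ of that combination are exactly what we want.

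Concretely, I would exhibit the $\gamma_i$ explicitly via the Lagrange basis: for each $i\in I$ set
\begin{equation*}
	\gamma_i=\prod_{i'\in I\setminus\{i\}}\frac{0-i'}{i-i'}\,.
\end{equation*}
If $0\in I$ this product is the empty/degenerate case and one simply takes $\gamma_i=1$ for the index $i$ with $i=0$ and $\gamma_i=0$ otherwise; so assume $0\notin I$ (which is the relevant case in the application, since $I=\oddints{2k+1}$). The Lagrange interpolation formula states that every polynomial $f$ of degree at most $k$ satisfies
\begin{equation*}
	f(x)=\sum_{i\in I}f(i)\prod_{i'\in I\setminus\{i\}}\frac{x-i'}{i-i'}
\end{equation*}
identically in $x$, because both sides are polynomials of degree at most $k$ agreeing at the $k+1$ distinct points of $I$. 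Substituting $x=0$ yields $f(0)=\sum_{i\in I}f(i)\gamma_i$, which is the claimed identity.

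There is essentially no obstacle here: the only point to be careful about is that $I$ is allowed to be an arbitrary subset of $\RR$ of size $k+1$, so the $i'$ appearing in the denominators $i-i'$ are pairwise distinct and the $\gamma_i$ are well-defined real numbers; and one should note that $x\mapsto\prod_{i'\in I\setminus\{i\}}(x-i')/(i-i')$ indeed has degree exactly $k$ (it is a product of $|I|-1=k$ linear factors), so it lies in the space on which we are arguing. I would therefore present the proof in two lines: define the $\gamma_i$ by the displayed formula and invoke Lagrange interpolation (or, equivalently, the invertibility of the Vandermonde matrix) evaluated at $0$.
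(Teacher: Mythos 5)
Your proof is correct, and it takes a slightly different route from the paper's. You construct the coefficients explicitly via the Lagrange basis, $\gamma_i=\prod_{i'\in I\setminus\{i\}}\tfrac{0-i'}{i-i'}$, and obtain the identity by evaluating the Lagrange interpolation formula at $x=0$; you also correctly dispose of the degenerate case $0\in I$. The paper instead argues abstractly: it adjoins $i_0=0$ to $I$, observes that the joint evaluation map $\RR^{\le k}[t]\rightarrow\RR^{I_0}$, $f\mapsto(f(i_0),\dots,f(i_{k+1}))$, has image contained in a linear hyperplane $H$ (dimension count), and then uses unisolvence of interpolation at $k+1$ distinct nodes to conclude that no normal vector of $H$ has a zero entry, so one can normalize the $i_0$-coordinate of a normal to $-1$ and read off the $\gamma_i$. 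Both arguments rest on the same fact (a degree-$\le k$ polynomial is determined by its values at $k+1$ distinct points); yours buys explicit formulas for the $\gamma_i$ (which could be used to extract signs or magnitudes if needed), while the paper's is coordinate-free and yields only existence, which is all that is used in the application. Either proof is perfectly adequate in context.
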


\begin{proof}
	Suppose $I=\{i_1,\dots,i_{k+1}\}$ and set $i_0=0$ as well as $I_0=I\cup\{i_0\}$. We may assume $0\not\in I$, as otherwise the statement of the lemma is trivial. 
	Since the vector space~$\RR^{\le k}[t]$ of all (univariate) polynomials of degree at most~$k$ has dimension~$k+1$, the image of the linear map $\RR^{\le k}[t]\rightarrow \RR^{I_0}$ defined via  $f\mapsto(f(i_0),\dots,f(i_{k+1}))$ is contained in a linear hyperplane~$H\subseteq\RR^{I_0}$. As $i_0$, \dots, $i_{k+1}$ are pairwise different and every polynomial of degree at most~$k$ is determined by its values at any choice of~$k+1$ (pairwise different) arguments, $H$ does not contain any line parallel to a coordinate axis, and hence, the normal vectors to~$H$ do not have any zero-entries. In particular,  the hyperplane has a normal vector~$\gamma\in\RR^{I_0}$ whose  $i_0$-coefficient equals~$-1$.
\end{proof}

\vold{
As it will be convenient for Step~(b) (referring to the strategy described after the statement of Lemma~\ref{lem:consistentIndexed}), we will use the following refinements of the  partitionings of~$X$ into orbits of~$H_j$ (as mentioned at the end of Section~\ref{sec:yannakakis}).
Clearly, for $j\in\ints{d}$ and  $M,M'\in\match{2k+1}{n}$, 
\begin{equation}\label{eq:eqRelRefine}
	M\setminus E(V\setminus V_j)=M'\setminus E(V\setminus V_j)
\end{equation}
implies~\eqref{eq:orbitsHj}. Thus, for each~$j\in\ints{d}$, the equivalence classes of the equivalence relation defined by~\eqref{eq:eqRelRefine} refine the partitioning of~$X$ into orbits under~$H_j$, and we may use the collection of all these equivalence classes (for all $j\in\ints{d}$) as the family~$\mathcal{F}$ in Observation~\ref{obs:fam}. 
With 
\begin{multline*}
	\Lambda=\{(A,B)\,:\,A\subseteq E_n\text{ matching and there is some $j\in\ints{d}$ with}\\
		A\subseteq E_n\setminus E(V\setminus V_j), B=V_j\setminus V(A)\}\,,
\end{multline*}
(with $V(A)=\bigcup_{a\in A}a$)
we hence have
	$\mathcal{F}=\setDef{F(A,B)}{(A,B)\in\Lambda}$\,,
where
\begin{equation*}
	F(A,B)=\setDef{\charVec{M}}{M\in\match{2k+1}{n}, A\subseteq M\subseteq E(V\setminus B)}\,.
\end{equation*}

The advantage
 of using the refined partitioning defined by~\eqref{eq:eqRelRefine} instead of the one defined by~\eqref{eq:orbitsHj} is that the resulting equivalence classes ($F(A,B)\in\mathcal{F}$) can be parameterized by only one set $A\subseteq E_n$  of edges  that must be in the matching and one set $B\subseteq V$ of nodes that must remain unmatched, while without the refinement an additional set of nodes that must be matched would be needed. Of course, it is important that this refinement turns out to still allow the  construction of a suitable affine combination from certain vectors $\charFct{\mathcal{F}}(x)$ that is nonnegative in all components. 

In order to construct a subset $X^{\star}\subseteq X$ which will be used to derive a contradiction as mentioned after Equation~\eqref{eq:affineIndexed}, 
we choose two arbitrary disjoint subsets $V_{\star}, V^{\star}\subset V$ of nodes with $\sabs{V_{\star}}=\sabs{V^{\star}}=2k+1$, and define
\begin{equation*}
	\mathcal{M}^{\star}=\setDef{M\in\match{2k+1}{n}}{M\subseteq E(V_{\star}\cup V^{\star})}
\end{equation*}
as well as
\begin{equation*}
	X^{\star}=\setDef{\charVec{M}}{M\in\mathcal{M}^{\star}}\,.
\end{equation*}
Thus, $\mathcal{M}^{\star}$ is the set of perfect matchings on $K(V_{\star}\cup V^{\star})$. 
Clearly, $X^{\star}$ is contained in the affine subspace~$S$ of~$\RR^{E_n}$ defined by $x_e=0$ for all $e\in E_n\setminus E(V_{\star}\cup V^{\star})$. In fact, $X^{\star}$ is the vertex set of the face~$\PMatch{2k+1}{n}\cap S$ of~$\PMatch{2k+1}{n}$, and for this face the inequality $x(V_{\star}:V^{\star})\ge 1$ is valid (where $(V_{\star}:V^{\star})$ is the set of all edges having one node in~$V_{\star}$ and the other one in~$V^{\star}$), since every matching $M\in\mathcal{M}^{\star}$ intersects $(V_{\star}:V^{\star})$ in an odd number of edges. 
Therefore, in order to derive the desired contradiction, it suffices to find $c_x\in\RR$ (for all $x\in X^{\star}$) with
\begin{equation}\label{eq:combCond1}
	\sum_{x\in X^{\star}}c_x=1\,,
\end{equation}
\begin{equation}\label{eq:combCond2}
	\sum_{x\in X^{\star}}c_{x}\cdot\charFct{\mathcal{F}}(x)\ge\zeroVec{\mathcal{F}}\,,
\end{equation}
and
\begin{equation}\label{eq:combCond3}
	\sum_{x\in X^{\star}}c_x\sum_{e\in (V_{\star}:V^{\star})} x_e=0\,.
\end{equation}
For each $i\in\oddints{2k+1}=\{1,3,5,\dots,2k+1\}$, let
\begin{equation*}
	\mathcal{M}^{\star}_i=\setDef{M\in\mathcal{M}^{\star}}{|M\cap (V_{\star}:V^{\star})|=i}
\end{equation*}
(thus, $\mathcal{M}^{\star}=\bigcup_{i\in\oddints{2k+1}}\mathcal{M}^{\star}_i$) and
\begin{equation*}
	\bar{y}^{(i)}=\frac{1}{|\mathcal{M}^{\star}_i|}\sum_{M\in \mathcal{M}^{\star}_i}\charFct{\mathcal{F}}(\charVec{M})\,.
\end{equation*}
Every nonnegative affine combination of the $\bar{y}^{(i)}$ with coefficients $\bar{c}_i$ yields a nonnegative affine combination~$\eqref{eq:combCond2}$ with coefficients 
\begin{equation}\label{eq:cbarc}
	c_x=\frac{\bar{c}_i}{|\mathcal{M}^{\star}_i|}\quad\text{for all $x=\charVec{M}\in X^{\star}$ with $M\in\mathcal{M}^{\star}_i$}\,.
\end{equation}
In order to investigate the components $\bar{y}_{F(A,B)}$ (for $(A,B)\in\Lambda$) of such an affine combination 
\begin{equation*}
	\bar{y}=\sum_{i\in\oddints{2k+1}}\bar{c}_i\bar{y}^{(i)}\quad\text{with}\quad \sum_{i\in\oddints{2k+1}}\bar{c}_i=1\,,
\end{equation*}
we first observe   (for all $i\in\oddints{2k+1}$)
\begin{equation*}
	\bar{y}^{(i)}_{F(A,B)}=\frac{1}{|\mathcal{M}^{\star}_i|}\big|\setDef{M\in\mathcal{M}^{\star}_i}{A\subseteq M\subseteq E((V_{\star}\cup V^{\star})\setminus B)}\big|
\end{equation*}
for all $(A,B)\in\Lambda$.
In particular, we have
$\bar{y}^{(i)}_{F(A,B)}=0$ (thus $\bar{y}_{F(A,B)}=0$) for all $(A,B)\not\in\Lambda^{\star}$ with
\begin{equation*}
	\Lambda^{\star}=\setDef{(A,B)\in\Lambda}{A\subseteq E(V_{\star}\cup V^{\star})\text{ and }B\subseteq V\setminus(V_{\star}\cup V^{\star})}\,.
\end{equation*}
We partition~$\Lambda^{\star}$ into the subsets $\Lambda(a_{\star},a^{\star},a_{\star}^{\star},b)$ containing all $(A,B)\in\Lambda^{\star}$ with
\begin{equation*}
|A\cap E(V_{\star})|=a_{\star},\ |A\cap E(V^{\star})|=a^{\star},\
	                         |A\cap(V_{\star}:V^{\star})|=a_{\star}^{\star},\ \text{and } |B|=b\,.
\end{equation*}
Let us consider the point~$\bar{y}^{(i)}$ for some $i\in\oddints{2k+1}$. 
For every $M\in\mathcal{M}^{\star}_i$, the number of $(A,B)\in\Lambda(a_{\star},a^{\star},a_{\star}^{\star},b)$ with $A\subseteq M$ equals
\begin{multline*}
	\mu(i,a_{\star},a^{\star},a_{\star}^{\star},b)\\=
	\binom{(2k+1-i)/2}{a_{\star}}\cdot
	\binom{(2k+1-i)/2}{a^{\star}}\cdot
	\binom{i}{a_{\star}^{\star}}\cdot
	\binom{n-(4k+2)}{b}\,.
\end{multline*}
The sum of the coordinates of $\bar{y}^{(i)}$ that are indexed by $\Lambda(a_{\star},a^{\star},a_{\star}^{\star},b)$ is
\begin{multline*}
	\sum_{(A,B)\in\Lambda(a_{\star},a^{\star},a_{\star}^{\star},b)}\bar{y}^{(i)}_{F(A,B)}
	=\frac{1}{|\mathcal{M}^{\star}_i|}\sum_{(A,B)\in\Lambda(a_{\star},a^{\star},a_{\star}^{\star},b)}\sum_{M\in\mathcal{M}^{\star}_i}
	\begin{cases}
		1 & \text{if }A\subseteq M\\
		0 & \text{otherwise}
	\end{cases}\\
	=\frac{1}{|\mathcal{M}^{\star}_i|}\sum_{M\in\mathcal{M}^{\star}_i}\mu(i,a_{\star},a^{\star},a_{\star}^{\star},b)
	=\mu(i,a_{\star},a^{\star},a_{\star}^{\star},b)\,.
\end{multline*}
Since due to the symmetry of the complete graph we have $\bar{y}^{(i)}_{F(A,B)}=\bar{y}^{(i)}_{F(A',B')}$ for all $(A,B),(A',B')\in \Lambda(a_{\star},a^{\star},a_{\star}^{\star},b)$, 
we hence conclude
\begin{equation*}
	\bar{y}^{(i)}_{F(A,B)}=\frac{1}{|\Lambda(a_{\star},a^{\star},a_{\star}^{\star},b)|}\mu(i,a_{\star},a^{\star},a_{\star}^{\star},b)
\end{equation*}
for all $i\in\oddints{2k+1}$ and $(A,B)\in\Lambda(i,a_{\star},a^{\star},a_{\star}^{\star},b)$. 

In particular, any coefficients $\bar{c}_i\in\RR$ (for $i\in\oddints{2k+1}$) that satisfy
\begin{equation}\label{eq:cbaraff}
	\sum_{i\in\oddints{2k+1}}\bar{c}_i=1
\end{equation}
and
\begin{equation}\label{eq:cbarallstar}
	\sum_{i\in\oddints{2k+1}}\bar{c}_i\mu(i,a_{\star},a^{\star},a_{\star}^{\star},b)\ge 0\quad\text{for all } (a_{\star},a^{\star},a_{\star}^{\star},b)
\end{equation}
via~\eqref{eq:cbarc} yield an affine combination satisfying~\eqref{eq:combCond1} and \eqref{eq:combCond2}.

Due to $|V_j|\le k$ for all $j\in\ints{n}$, we have $|A|\le k$ (even $|A|+|B|\le k$) for all $(A,B)\in\Lambda$. Thus, we can restrict condition~\eqref{eq:cbarallstar} to those quadruples of  nonnegative integers  with 
$a_{\star}+a^{\star}+a_{\star}^{\star}\le k$. For such a quadruple, 
\begin{equation*}
	f_{a_{\star},a^{\star},a_{\star}^{\star},b}(t)
    =
	\binom{(2k+1-t)/2}{a_{\star}}\cdot
	\binom{(2k+1-t)/2}{a^{\star}}\cdot
	\binom{t}{a_{\star}^{\star}}\cdot
	\binom{n-(4k+2)}{b}
\end{equation*}
is a polynomial  of degree at most~$k$ in variable~$t$ (expanding $\binom{\eta}{s}=(\eta(\eta-1)\cdots(\eta-s+1)/s!$ for all $\eta\in\RR$ and $s\in\NN$).

Let $\bar{c}_i\in\RR$ (for $i\in\oddints{2k+1}$) be the
 (unique) solution of the following system having a Vandermonde (thus regular) coefficient matrix:
\[
\begin{array}{rrcll}
	\displaystyle
	\sum_{i\in\oddints{2k+1}}&\bar{c}_i & = & 1 \\
	\displaystyle
	\sum_{i\in\oddints{2k+1}}&i^{\ell}\bar{c}_i & = & 0 & \quad\text{for all }\ell\in\ints{k}
\end{array}
\]
Then, for any polynomial
\begin{equation*}
	p(t)=\sum_{\ell=0}^k\alpha_{\ell}t^{\ell}
\end{equation*}
(with coefficients $\alpha_0,\dots,\alpha_k\in\RR$) of degree at most~$k$, we  have
\begin{equation*}
	\sum_{i\in\oddints{2k+1}}\bar{c}_ip(i)
	= \sum_{\ell=0}^k \alpha_{\ell}\sum_{i\in\oddints{2k+1}}i^{\ell}\bar{c}_i 
	=\alpha_0=p(0)\,.
\end{equation*}
In particular, with the constant polynomial $p(t)=1$, this implies~\eqref{eq:cbaraff}, and with $p(t)=f_{a_{\star},a^{\star},a_{\star}^{\star},b}(t)$ it yields
\begin{equation}\label{eq:sumfeqf0}
	\sum_{i\in\oddints{2k+1}}\bar{c}_i\mu(i,a_{\star},a^{\star},a_{\star}^{\star},b)
	=\sum_{i\in\oddints{2k+1}}\bar{c}_if_{a_{\star},a^{\star},a_{\star}^{\star},b}(i)
	=f_{a_{\star},a^{\star},a_{\star}^{\star},b}(0)\,,
\end{equation}
for all $(a_{\star},a^{\star},a_{\star}^{\star},b)$ with $a_{\star}+a^{\star}+a_{\star}^{\star}\le k$. Since, for these quadruples, we  have $f_{a_{\star},a^{\star},a_{\star}^{\star},b}(0)\ge 0$, 
\eqref{eq:cbarallstar} thus holds for all relevant quadruples.

Actually, \eqref{eq:sumfeqf0} also implies~\eqref{eq:combCond3}, which finally concludes the proof of Theorem~\ref{thm:lowerbound}. To see this, let us calculate the left-hand-side
\begin{equation*}
	\sum_{x\in X^{\star}}c_x\sum_{e\in (V_{\star}:V^{\star})} x_e
	=\sum_{i\in\oddints{2k+1}}\frac{\bar{c}_i}{|\mathcal{M}^{\star}_i|}\sum_{e\in(V_{\star}:V^{\star})}|\setDef{M\in\mathcal{M}^{\star}_i}{e\in M}|
\end{equation*}
of~\eqref{eq:combCond3}. Since we have 
\begin{equation*}
	\sum_{e\in(V_{\star}:V^{\star})}|\setDef{M\in\mathcal{M}^{\star}_i}{e\in M}|=|\mathcal{M}^{\star}_i|\cdot\mu(i,0,0,1,0)\,,
\end{equation*}
 we deduce from~\eqref{eq:sumfeqf0}  that  the left-hand-side of~\eqref{eq:combCond3} equals $f_{0,0,1,0}(0)=0$ (recall that, as indicated at the beginning of the proof, we may assume $k\ge 1$, which we need to ensure $0+0+1\le k$).
}

\section{A Non-symmetric Extension for $\PMatch{\ell}{n}$}
\label{sec:matchPoly}

We shall establish the following result on the existence of extensions for cardinality restricted matching polytopes in this section.

\begin{theorem}\label{thm:sizenonsymextform}
	For all~$n$ and~$\ell$, there are extensions for $\PMatch{\ell}{n}$ whose sizes can be bounded by $2^{\Order(\ell)}n^2\log n$ (and for which the encoding lengths of the coefficients  needed to describe them can be bounded by a constant). 
\end{theorem}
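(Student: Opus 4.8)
We may assume $1\le\ell$ and $2\ell\le n$, since otherwise $\PMatch{\ell}{n}$ is a single point or empty and there is nothing to prove. The plan is to exhibit $\PMatch{\ell}{n}$ as the convex hull of a union of few ``colorful matching'' polytopes, each of which carries a small extension, and then to combine these extensions by Balas' construction for the convex hull of a union of polyhedra (see, e.g., \cite{CCZ10}); the colorful matching polytopes will be glued together by means of a small family of perfect hash functions.

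Put $q=2\ell$. For a map $c\colon\ints{n}\to\ints{q}$ write $V_a=c^{-1}(a)$ for $a\in\ints{q}$, and let $(V_a:V_b)$ denote the set of edges of~$K_n$ joining~$V_a$ and~$V_b$. Call $M\in\match{\ell}{n}$ \emph{$c$-colorful} if~$c$ is injective on the vertex set~$V(M)$ of~$M$ --- equivalently, since $\sabs{V(M)}=q$, a bijection $V(M)\to\ints{q}$ --- and set $P_c=\conv\setDef{\charVec{M}\in\{0,1\}^{E_n}}{M\in\match{\ell}{n}\text{ is }c\text{-colorful}}$, a subpolytope of~$\PMatch{\ell}{n}$. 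By Theorem~\ref{thm:AYZ} (or the probabilistic argument sketched there, going back to~\cite{FKS84,SS90,AYZ95}), there is a family~$\mathcal{C}$ of such maps with $\sabs{\mathcal{C}}\le 2^{\Order(\ell)}\log n$ such that every $q$-element subset of~$\ints{n}$ is mapped injectively by at least one $c\in\mathcal{C}$. Since $\sabs{V(M)}=q$ for every $M\in\match{\ell}{n}$, each vertex~$\charVec{M}$ of~$\PMatch{\ell}{n}$ lies in some~$P_c$, whence $\PMatch{\ell}{n}=\conv\bigl(\bigcup_{c\in\mathcal{C}}P_c\bigr)$.

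Next I would build a small extension of each~$P_c$. Besides the original variables~$x\in\RR^{E_n}$, introduce variables~$z$ indexed by the $\binom{q}{2}$ unordered pairs of colors, and let~$Q_c$ be the polyhedron defined by $x\ge\Zero$, by $x_e=0$ for every edge~$e$ lying inside a single color class, by the linking equations $\sum_{e\in(V_a:V_b)}x_e=z_{\{a,b\}}$ for all pairs of colors~$\{a,b\}$, and by the inequalities and equations of Edmonds' description~\eqref{eq:PMPoly} of the perfect matching polytope~$\PMatch{q/2}{q}$ of~$K_q$, imposed on the vector~$z$. One has to check that the coordinate projection forgetting~$z$ maps~$Q_c$ onto~$P_c$. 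For ``$\supseteq$'', a $c$-colorful matching~$M$ induces a perfect matching~$\mu$ of~$K_q$ (pair up the colors of the two ends of each edge of~$M$), and $(\charVec{M},\charVec{\mu})\in Q_c$. For ``$\subseteq$'', take $(x,z)\in Q_c$ and write $z=\sum_i\lambda_i\charVec{\mu_i}$ as a convex combination of perfect matchings~$\mu_i$ of~$K_q$; then, separately for each pair of colors~$\{a,b\}$, distribute the masses $x_e$ ($e\in(V_a:V_b)$) among the indices~$i$ with $\{a,b\}\in\mu_i$ by solving a transportation problem, which is feasible because the total supply $\sum_{e\in(V_a:V_b)}x_e$ equals the total demand $z_{\{a,b\}}=\sum_{i:\,\{a,b\}\in\mu_i}\lambda_i$. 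After rescaling by the~$\lambda_i$ this exhibits~$x$ as $\sum_i\lambda_i x^{(i)}$, where each~$x^{(i)}$ is supported on $\bigcup_{\{a,b\}\in\mu_i}(V_a:V_b)$ with coordinate sum~$1$ on each $(V_a:V_b)$ with $\{a,b\}\in\mu_i$; the vertices of this product of simplices are exactly the $c$-colorful matchings whose induced matching is~$\mu_i$, so $x^{(i)}\in P_c$ and hence $x\in P_c$. Apart from equations, the description of~$Q_c$ consists of the $\Order(n^2)$ nonnegativity constraints on~$x$ together with only the $2^{\Order(\ell)}$ odd-set and nonnegativity inequalities of~\eqref{eq:PMPoly}; hence~$Q_c$ has size $2^{\Order(\ell)}+\Order(n^2)$, and all its coefficients lie in $\{-1,0,1\}$.

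Finally I would assemble the extensions~$Q_c$, $c\in\mathcal{C}$, into a single extension of~$\PMatch{\ell}{n}$ by Balas' construction: homogenize each of the descriptions, adjoin multipliers $\lambda_c\ge 0$ with $\sum_{c\in\mathcal{C}}\lambda_c=1$, and set the final~$x$ to be the sum of the homogenized projected copies. The number of inequalities of the extension so obtained is $\sum_{c\in\mathcal{C}}(\text{size of }Q_c)+\sabs{\mathcal{C}}\le\sabs{\mathcal{C}}\cdot\bigl(2^{\Order(\ell)}+\Order(n^2)\bigr)=2^{\Order(\ell)}n^2\log n$, and all coefficients retain constant encoding length, as required. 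I expect the main obstacle to be the verification that the projection of~$Q_c$ equals~$P_c$: the inclusion ``$\supseteq$'' is immediate, but ``$\subseteq$'' hinges on the transportation decomposition sketched above, whose feasibility in turn relies on the linking equations together with the vertex structure of~$\PMatch{q/2}{q}$ coming from~\eqref{eq:PMPoly}. The small perfect hash family, the size bookkeeping, and Balas' construction are then routine.
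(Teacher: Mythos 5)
Your proposal is correct, and its skeleton coincides with the paper's: the same perfect hash family with $r=2\ell$ colors, the same decomposition $\PMatch{\ell}{n}=\conv(P_1\cup\dots\cup P_q)$ into polytopes of ``colorful'' matchings, and the same use of Lemma~\ref{lem:extensionOfUnion} (Balas) with the identical size bookkeeping $2^{\Order(\ell)}n^2\log n$. Where you genuinely diverge is in how each colorful piece is handled. The paper does not lift $P_i$ at all: it quotes from~\cite{KL10} (branched polyhedral systems combined with Edmonds' theorem) an exact linear description of $P_i$ in the original space $\RR^{E_n}$, consisting of the aggregated degree equations $x(\delta(\phi_i^{-1}(s)))=1$ and the aggregated odd-cut inequalities $x(\delta(\phi_i^{-1}(S)))\ge 1$, so the correctness of that description is outsourced to the cited reference. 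You instead build an explicit extension $Q_c$ with auxiliary variables $z$ on color pairs, impose Edmonds' description~\eqref{eq:PMPoly} of $\PMatch{q/2}{q}$ on $z$, link it to $x$ by the aggregation equations, and verify the projection by hand: decompose $z$ into perfect matchings $\mu_i$ of the color graph, split the mass on each color pair by a balanced transportation step, and observe that each resulting piece lies in a product of simplices whose vertices are exactly the $c$-colorful matchings inducing $\mu_i$. This argument is sound (the only delicate points --- that zero $z_{\{a,b\}}$ kills the corresponding $x$-mass via the linking equation, and that one edge per pair of $\mu_i$ is automatically a colorful $\ell$-matching because the color classes are disjoint --- are handled correctly), and the extra $z$-variables do not affect the size as defined in the paper. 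So your route buys self-containedness, replacing the appeal to the branched-polyhedral-systems derivation by an elementary lifting-and-decomposition proof, at the cost of a lifted rather than an $x$-space description of each piece; the paper's route is shorter on the page but leans on~\cite{KL10} for the crucial description of $P_i$.
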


In particular, Theorem~\ref{thm:sizenonsymextform} implies the following, although, according to Corollary~\ref{cor:noCompactSymMatch}, no compact symmetric extended formulations exist for~$\PMatch{\ell}{n}$  with $\ell=\Theta(\log n)$. 
\begin{corollary}\label{cor:CompactMatch}
	For all~$n$ and $\ell\le\Order(\log n)$, there are compact extended formulations for $\PMatch{\ell}{n}$.
\end{corollary}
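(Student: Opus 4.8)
The plan is to combine the \emph{colour coding} technique with Edmonds' description of perfect matching polytopes. We may assume $1\le\ell\le n/2$, since otherwise $\PMatch{\ell}{n}=\varnothing$ and the empty formulation does the job. First I would fix a family $\mathcal{H}$ of functions $h\colon\ints{n}\to\ints{2\ell}$ such that every $2\ell$-element subset $S\subseteq\ints{n}$ is mapped injectively by at least one $h\in\mathcal{H}$. By the usual union-bound argument (a random $h$ is injective on a fixed $2\ell$-set with probability $(2\ell)!/(2\ell)^{2\ell}\ge e^{-2\ell}$, and there are at most $n^{2\ell}$ such sets), such a family with $\sabs{\mathcal{H}}=2^{\Order(\ell)}\log n$ exists; see Theorem~\ref{thm:AYZ} and \cite{AYZ95,FKS84,SS90}.

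For $h\in\mathcal{H}$ let $G_h$ be the loopless multigraph obtained from $K_n$ by contracting each colour class $h^{-1}(a)$, $a\in\ints{2\ell}$, to a single node and deleting the arising loops; its edges are naturally identified with the set $E_n\setminus\bigcup_{a}E(h^{-1}(a))$ of bichromatic edges of $K_n$. The key observation is that a perfect matching of $G_h$ is precisely a set of $\ell$ edges of $K_n$ meeting every colour class exactly once, i.e.\ a matching $M\in\match{\ell}{n}$ with $h$ injective on $V(M)$ (indeed, such an $M$ has $\sabs{V(M)}=2\ell$ and its vertex set meets every class, hence exactly once), and the converse is clear. Hence, writing
\[
	P_h:=\conv\setDef{\charVec{M}}{M\in\match{\ell}{n},\ h|_{V(M)}\text{ injective}}\subseteq\RR^{E_n}
\]
and appending the coordinates $x_e=0$ for $e\in\bigcup_a E(h^{-1}(a))$, the polytope $P_h$ is the image of the perfect matching polytope of~$G_h$ under the obvious coordinate embedding. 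By Edmonds' theorem (applied to the multigraph $G_h$) it is therefore described in $\RR^{E_n}$ by the equations $x_e=0$ for $e\in\bigcup_a E(h^{-1}(a))$, the degree equations $x(\delta(h^{-1}(a)))=1$ for $a\in\ints{2\ell}$, the bounds $x\ge\Zero$, and the odd-set inequalities: one inequality $\sum_{e}x_e\ge 1$ for each odd $S\subseteq\ints{2\ell}$ with $3\le\sabs{S}\le 2\ell-1$, the sum ranging over all edges of $K_n$ joining $\bigcup_{a\in S}h^{-1}(a)$ to its complement. This is where a factor $2^{\Order(\ell)}$, rather than the $(2\ell-1)!!=2^{\Theta(\ell\log\ell)}$ one would get by enumerating all perfect matchings of the colour set, enters: there are at most $2^{2\ell}$ such inequalities, and all coefficients lie in $\{0,1\}$.

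Since the vertices of every $P_h$ are characteristic vectors of $\ell$-matchings, $P_h\subseteq\PMatch{\ell}{n}$; and since every vertex $\charVec{M}$ of $\PMatch{\ell}{n}$ has $\sabs{V(M)}=2\ell$, there is $h\in\mathcal{H}$ with $h|_{V(M)}$ injective, so $\charVec{M}\in P_h$. Thus $\PMatch{\ell}{n}=\conv\bigl(\bigcup_{h\in\mathcal{H}}P_h\bigr)$ (discarding those $h$ with $P_h=\varnothing$ changes nothing, as any $h$ needed in the second step has $\charVec{M}\in P_h\ne\varnothing$). Now I would apply the standard extended formulation for the convex hull of a union of nonempty bounded polyhedra (Balas): introduce a copy $y^h\in\RR^{E_n}$ of the $x$-variables and a multiplier $\mu_h\ge 0$ for each $h\in\mathcal{H}$, impose $x=\sum_h y^h$ and $\sum_h\mu_h=1$, and for each $h$ the homogenization of the description of $P_h$ above (each $\scalProd{a}{x}\ge 1$ becomes $\scalProd{a}{y^h}\ge\mu_h$, each $\scalProd{a}{x}=c$ becomes $\scalProd{a}{y^h}=c\,\mu_h$, together with $y^h\ge\Zero$); the projection onto the $x$-variables equals $\PMatch{\ell}{n}$. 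The number of inequalities is $\sabs{\mathcal{H}}\cdot\bigl(\binom{n}{2}+2^{2\ell}+1\bigr)=2^{\Order(\ell)}\log n\cdot\Order(n^2+2^{2\ell})=2^{\Order(\ell)}n^2\log n$, and all coefficients stay in $\{0,\pm1\}$ and all right-hand sides in $\{0,1\}$, so their encoding lengths are bounded by a constant.

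The only genuinely non-routine step is the second one: one must verify carefully that contracting the colour classes sets up a bijection between colourful $\ell$-matchings of $K_n$ and perfect matchings of the multigraph $G_h$ (treating parallel bichromatic edges and deleted monochromatic loops correctly), so that Edmonds' polyhedral description transfers verbatim and costs only $2^{\Order(\ell)}$ inequalities. The remaining ingredients --- existence of the small hash family, the Balas lift, and the coefficient bookkeeping --- are standard.
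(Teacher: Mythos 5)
Your proposal is correct and follows essentially the same route as the paper: perfect hash functions with $r=2\ell$ (Theorem~\ref{thm:AYZ}), a description of each colourful piece $P_h$ via Edmonds' perfect matching theorem, and the Balas-type union construction (Lemma~\ref{lem:extensionOfUnion}), yielding size $2^{\Order(\ell)}n^2\log n$ and hence compactness for $\ell\le\Order(\log n)$. The only cosmetic difference is that you justify the description of $P_h$ directly by contracting colour classes to a multigraph, whereas the paper invokes the branched-polyhedral-systems derivation from~\cite{KL10}; both give the same inequalities.
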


The proof of Theorem~\ref{thm:sizenonsymextform} relies on the following result on the existence of small families of \emph{perfect-hash functions}, which is from~\cite[Sect.~4]{AYZ95}. 

\begin{theorem}[Alon, Yuster, Zwick~\cite{AYZ95}]\label{thm:AYZ}
There are maps $\phi_1,\dots,\phi_{q(n,r)}:\ints{n}\rightarrow\ints{r}$ with~$q(n,r)\le 2^{\Order(r)}\log n$ such that, for every $W\subseteq\ints{n}$ with $|W|=r$, there is some $i\in\ints{q(n,r)}$ for which the map $\phi_i$ is bijective on~$W$.
\end{theorem}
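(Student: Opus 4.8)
The plan is to prove the statement by the probabilistic method, which is enough since only the existence of the family is asserted. First I would pick the maps $\phi_1,\dots,\phi_q:\ints{n}\rightarrow\ints{r}$ independently and uniformly at random among all $r^n$ functions, where the value $q=q(n,r)$ will be fixed at the end. Fix a set $W\subseteq\ints{n}$ with $\sabs{W}=r$. A single random map $\phi$ is bijective on~$W$ precisely when its restriction to~$W$ is one of the $r!$ bijections onto~$\ints{r}$, an event of probability $\rho:=r!/r^r$. Using the elementary estimate $r!\ge(r/e)^r$ one gets $\rho\ge e^{-r}\ge 2^{-cr}$ with $c=\log_2 e$. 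Since the $\phi_i$ are independent, the probability that \emph{none} of $\phi_1,\dots,\phi_q$ is bijective on~$W$ equals $(1-\rho)^q\le e^{-\rho q}\le\exp(-2^{-cr}q)$.

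Next I would take a union bound over the at most $\binom{n}{r}\le n^r$ choices of~$W$: the probability that there exists some $r$-element subset of~$\ints{n}$ on which no~$\phi_i$ is bijective is at most $n^r\exp(-2^{-cr}q)$. This bound is strictly below~$1$ as soon as $q>2^{cr}\,r\ln n$. Hence setting $q(n,r)=\lceil 2^{cr}\,r\ln n\rceil+1$, which plainly satisfies $q(n,r)\le 2^{\Order(r)}\log n$ (the factor~$r$ and the change from $\ln n$ to $\log n$ being absorbed into the $2^{\Order(r)}$ and the $\Order$-notation), the random family $\phi_1,\dots,\phi_{q(n,r)}$ has the required property with positive probability, so in particular such a family exists.

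I do not expect any serious obstacle in this argument; the only points that need a little care are the Stirling-type lower bound $r!/r^r\ge 2^{-\Order(r)}$, which is what makes the scheme work (a random map hits a prescribed $r$-set bijectively with probability bounded below by $2^{-\Order(r)}$), and the routine bookkeeping needed to fold the $r\ln n$ factor into the claimed bound $2^{\Order(r)}\log n$. For completeness one could remark that explicit, polynomial-time constructible families of the same asymptotic size are also available (this is what is actually done in~\cite{AYZ95}, building on the constructions of~\cite{FKS84,SS90} together with small-bias sample spaces), but the probabilistic argument above already yields the stated existence result, which is all that is needed in the sequel.
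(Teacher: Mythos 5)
Your argument is correct and is essentially the same as the paper's: a union bound over all $\binom{n}{r}$ sets $W$ applied to independent uniformly random maps, each bijective on a fixed $W$ with probability $r!/r^r=2^{-\Theta(r)}$, with the family size chosen just large enough to make the failure probability less than one. The only cosmetic difference is that you lower-bound $r!/r^r$ by $e^{-r}$ early on, whereas the paper keeps the exact value and invokes Stirling at the end.
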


\volker{%
Actually,  based on results from~\cite{FKS84,SS90}, 
Alon, Yuster, and Zwick even show that, given an index~$i$ of one of the maps and an element $v\in [n]$, the value $\phi_i(v)$ can be computed in constant time (in the uniform cost model). 

The mere extistence of such a family  follows easily\footnote{This was brought to our attention by   the referees.} from observing that for~$\phi$ chosen uniformaly at random from all maps $[n]\to[r]$ and for any $r$-element subset~$W$ of~$[n]$, the probability that~$\phi$ is bijective on~$W$ is $\frac{r!}{r^r}$. Choosing $m := \frac{r^{r+1}}{r!} \ln n $ such maps  $\phi_1,\dots,\phi_m$ independently, for every $r$-element subset~$W$ of~$[n]$ the probability that none of $\phi_1,\dots,\phi_m$ is bijective on~$W$ is 
\begin{equation*}
 (1-\tfrac{r!}{r^r})^m \le e^{-m \frac{r!}{r^r}} \le n^{-r}.
\end{equation*}
Thus, the probability that there is 
some
$r$-element subset~$W$ of $[n]$
on which none of the~$\phi_i$ is 
bijective
is at most
\begin{equation*}
 \binom{n}{r} n^{-r} < 1.
\end{equation*}
Hence, the probability that 
$\phi_1,\dots,\phi_m$ 
have the desired property is
non-zero.
The proof is concluded by noting that
we have $\frac{r^{r+1}}{r!}=2^{\Theta(r)}$ by Stirling's formula.
}

Additionally to Theorem~\ref{thm:AYZ},
we will use the following construction of an extension of a polytope that is specified as the convex hull of some polytopes of which extensions are already available. The result essentially is due to Balas (see, e.g., \cite[Thm.2.1]{Bal85}). In the form it is stated here,  it is taken from~\cite[Cor.~3]{KL10}, where it is derived from general results on \emph{branched polyhedral systems}. Actually, 
 in this section we will need only  the special  case that the extensions used in the construction  are the polytopes themselves. However, we will face the slightly more general situation in our treatment of cycle polytopes in Section~\ref{sec:cyclePoly}. 

\begin{lemma}\label{lem:extensionOfUnion}
	If the polytopes $P_i\subseteq\RR^m$ (for $i\in\ints{q}$) have extensions~$Q_i$ of size~$s_i$, respectively, then
	\begin{equation*}
		P=\conv(P_1\cup\cdots\cup P_q)
	\end{equation*}
	has an extension of size $\sum_{i=1}^q(s_i+1)$.
\end{lemma}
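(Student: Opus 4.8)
The plan is to realize $P = \conv(P_1 \cup \cdots \cup P_q)$ as the image of a polyhedron in a higher-dimensional space built by "gluing" the extensions $Q_i$ together with a simplex of convex-combination multipliers. Concretely, suppose $Q_i \subseteq \RR^{d_i}$ has affine projection $p_i : \RR^{d_i} \to \RR^m$ with $p_i(Q_i) = P_i$, and write $Q_i = \setDef{y \in \RR^{d_i}}{A_i y \le b_i}$ with $A_i$ having exactly $s_i$ rows. The classical Balas construction introduces, for each $i \in \ints{q}$, a copy $z_i \in \RR^{d_i}$ and a scalar multiplier $\lambda_i \in \RR$, and considers the system $A_i z_i \le \lambda_i b_i$ for all $i$, together with $\lambda_i \ge 0$ for all $i$ and $\sum_{i=1}^q \lambda_i = 1$. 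The projection sends $(z_1, \dots, z_q, \lambda)$ to $\sum_i (p_i^{\text{lin}} z_i + \lambda_i t_i)$, where $p_i(y) = p_i^{\text{lin}} y + t_i$; equivalently one takes the affine projection applied blockwise and summed. The point is that since each $P_i$ (hence each $Q_i$) is bounded, the homogenization $A_i z_i \le \lambda_i b_i$, $\lambda_i \ge 0$ has the property that its slice at $\lambda_i = \mu \ge 0$ is exactly $\mu Q_i$ (for $\mu=0$ it is the recession cone of $Q_i$, which is $\{0\}$ by boundedness), so a feasible point of the glued system with multipliers $\lambda$ projects to $\sum_i \lambda_i (\text{point of } P_i) \in P$, and conversely every convex combination $\sum \lambda_i x_i$ with $x_i \in P_i$ is hit by choosing $z_i$ a preimage of $\lambda_i x_i$ in $\lambda_i Q_i$.

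The steps I would carry out, in order: first, set up notation for the $Q_i$, their inequality descriptions (with exactly $s_i$ rows each), and their affine projections $p_i$. Second, write down the glued polyhedron
\begin{equation*}
	Q = \setDef{(z_1,\dots,z_q,\lambda) \in \RR^{d_1} \times \cdots \times \RR^{d_q} \times \RR^q}{A_i z_i \le \lambda_i b_i \ \ (i \in \ints{q}),\ \lambda \ge \Zero,\ \textstyle\sum_i \lambda_i = 1}
\end{equation*}
and the projection $p(z_1,\dots,z_q,\lambda) = \sum_{i=1}^q (p_i^{\text{lin}} z_i + \lambda_i t_i)$, which is affine. Third, prove $p(Q) = P$: the inclusion $p(Q) \subseteq P$ uses that $(z_i, \lambda_i)$ with $A_i z_i \le \lambda_i b_i$, $\lambda_i \ge 0$ forces, via boundedness of $Q_i$, that $z_i = \lambda_i y_i$ for some $y_i \in Q_i$ when $\lambda_i > 0$ and $z_i = 0$ when $\lambda_i = 0$; then $p_i^{\text{lin}} z_i + \lambda_i t_i = \lambda_i p_i(y_i) \in \lambda_i P_i$ (interpreting $0 \cdot P_i = \{0\}$ but noting such terms contribute nothing and the surviving multipliers still sum to $1$), so the sum lies in $\conv(\bigcup_i P_i) = P$. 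The reverse inclusion $P \subseteq p(Q)$ is immediate: given $x = \sum_i \lambda_i x_i$ with $x_i \in P_i$, $\lambda$ in the simplex, pick $y_i \in Q_i$ with $p_i(y_i) = x_i$ and set $z_i = \lambda_i y_i$. Fourth, count: $Q$ is cut out by $\sum_i s_i$ inequalities from the blocks $A_i z_i \le \lambda_i b_i$, plus $q$ inequalities $\lambda_i \ge 0$, plus one equation $\sum \lambda_i = 1$ which does not count toward size; that is $\sum_i s_i + q = \sum_{i=1}^q (s_i + 1)$ inequalities, so the extension has size at most $\sum_{i=1}^q(s_i+1)$. (If one wants size exactly this, note any extension of size $f$ can be described by an extended formulation of size $f$ and conversely the size of the extension — number of facets — is at most the number of inequalities, which suffices for the statement.)

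The only genuine subtlety — the "main obstacle" — is the boundedness argument ensuring that the homogenized block $\{(z,\mu) : Az \le \mu b,\ \mu \ge 0\}$ has $\mu = 0$ slice equal to $\{0\}$ rather than a nontrivial cone; this is exactly where one uses that each $P_i$ is a polytope (so each $Q_i$, being an extension of a bounded set, has trivial recession cone — or, if one prefers to allow unbounded $Q_i$, one must argue the recession directions project into the recession cone of $P_i = \{0\}$, hence contribute nothing). A minor related point is bookkeeping when some $\lambda_i = 0$: the corresponding $z_i$ is forced to $0$, the term drops out, and one is left with a convex combination over the support of $\lambda$, still a point of $P$. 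Everything else is routine linear algebra and a direct inequality count; since the lemma is cited from \cite{Bal85} and \cite{KL10}, I would keep the write-up brief, emphasizing the glued system and the size count, and cite those sources for the full verification of $p(Q) = P$.
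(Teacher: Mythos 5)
Your proof is correct and is precisely the classical Balas disjunctive-programming construction that the paper invokes without reproving (citing \cite{Bal85} and \cite{KL10}), including the right treatment of blocks with $\lambda_i=0$ via recession directions projecting into $\mathrm{rec}(P_i)=\{0\}$, which avoids any boundedness assumption on the $Q_i$ themselves. The only cosmetic gap is that a non-full-dimensional $Q_i$ cannot be written as $\setDef{y}{A_iy\le b_i}$ with only $s_i$ rows; include the equations describing $\aff(Q_i)$ in the description and homogenize them as $C_iz_i=\lambda_i d_i$ as well --- since equations do not count toward the size, your count of $\sum_i s_i+q=\sum_{i=1}^q(s_i+1)$ inequalities, and hence the bound on the number of facets, is unchanged.
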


In order to prove Theorem~\ref{thm:sizenonsymextform}, let 
 $\phi_1,\dots,\phi_q$ be maps as guaranteed to exist by Theorem~\ref{thm:AYZ} with $r=2\ell$ and $q=q(n,2\ell)\le 2^{\Order(\ell)}\log n$, and denote 
	$\mathcal{M}_i=\setDef{M\in\match{\ell}{n}}{\phi_i\text{ is bijective on }V(M)}$
for each $i\in\ints{q}$. By Theorem~\ref{thm:AYZ}, we have $\match{\ell}{n}=\mathcal{M}_1\cup\cdots\cup\mathcal{M}_q$. Consequently, 
\begin{equation}\label{eq:convUnion}
	\PMatch{\ell}{n}=\conv(P_1\cup\cdots\cup P_q)
\end{equation}
with $P_i=\conv\setDef{\charVec{M}}{M\in\mathcal{M}_i}$ 
for all $i\in\ints{q}$. Using the concept of branched polyhedral systems mentioned above along with Edmonds'  Matching Theorem~\cite{Edm65b} (see~\eqref{eq:PMPoly}), one finds (see~\cite[Sect.~4.4]{KL10} 
\volker{for the derivation})
\begin{multline*}
	P_i=\{x\in\RR_+^{E_n}\,:\,x_{E_n\setminus E^i}=\zeroVec{}, x(\delta(\phi_i^{-1}(s)))= 1\text{ for all }s\in\ints{2\ell},\\
	                             x(\delta(\phi_i^{-1}(S)))\ge 1 \text{ for all }S\subseteq\ints{2\ell}, |S|\text{ odd}\}	\,,
\end{multline*}
where $E^i=E_n\setminus\bigcup_{j\in\ints{2\ell}}E(\phi_i^{-1}(j))$.
As the sum of the number of variables and the number of inequalities in the description of~$P_i$ is bounded by $2^{\Order(\ell)}+n^2$ (the summand~$n^2$ coming from the nonnegativity constraints on~$x\in\RR_+^{E_n}$ and the constant in $\Order(\ell)$ being independent of~$i$), we obtain an extension of~$\PMatch{\ell}{n}$ of size  $2^{\Order(\ell)}n^2\log n$ by Lemma~\ref{lem:extensionOfUnion}. This proves Theorem~\ref{thm:sizenonsymextform}.

\section{Extensions for Cycle Polytopes}
\label{sec:cyclePoly}

By a  modification of Yannakakis'  construction for the derivation of lower bounds on the sizes of symmetric extensions for  traveling salesman polytopes from the corresponding lower bounds for matching polytopes \cite[Thm.~2]{Yan91}, we  obtain lower bounds on the sizes of symmetric extensions for $\PCycl{\ell}{n}$. The lower bound $\ell\ge 42$ in the statement of the theorem is convenient with respect to both formulating the bound and 
proving its validity. 

\begin{theorem}\label{thm:lbgeneralcycl}
	There is a constant $C'>0$ such that, for all~$n$ and $42\le\ell\le n$, the size of every extension for $\PCycl{\ell}{n}$ that is symmetric (with respect to the group~$\symGr{n}$ acting via permuting the nodes of~$K_n$ as described in the Introduction) is bounded from below by 
	\begin{equation*}
		C'\cdot\binom{\lfloor\frac{n}{3}\rfloor}{\lfloor(\lfloor\frac{\ell}{6}\rfloor-1)/2\rfloor}\,.
	\end{equation*}
\end{theorem}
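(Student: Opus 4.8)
The plan is to deduce the bound from Theorem~\ref{thm:lbgeneral} for cardinality-restricted matching polytopes, via Lemma~\ref{lem:boundForFace}: I would realize a cardinality-restricted matching polytope as a coordinate projection of a suitable face of $\PCycl{\ell}{n}$, in a manner compatible with the node-permutation symmetries. This is an adaptation to the cardinality-restricted setting of Yannakakis' reduction from matching polytopes to traveling salesman polytopes.

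Concretely, put $\ell':=\lfloor\ell/6\rfloor$ and $r:=\ell-6\ell'\in\{0,\dots,5\}$, and choose $n''\le\lfloor n/3\rfloor$ with $n''\ge\lfloor n/3\rfloor-O(1)$ and $3n''+r\le n$. I would partition a $(3n''+r)$-element subset of $\ints{n}$ into $n''$ \emph{triples} $T_v=\{v_1,v_2,v_3\}$ ($v\in\ints{n''}$) and one \emph{padding set} of $r$ further nodes, the remaining nodes being excluded. The face $P'$ of $\PCycl{\ell}{n}$ is obtained by setting $x_e=0$ for all edges except: the \emph{internal} edges $\{v_1,v_2\},\{v_2,v_3\}$ of the triples; one designated \emph{matching edge} $m_{\{u,w\}}$ between $T_u$ and $T_w$ for every pair $u\neq w$; suitable edges incident to the padding set; and a family of \emph{linking} edges between distinct triples. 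These edge families must be chosen so that every cycle of length $\ell$ in $P'$ is forced to traverse the padding set once and otherwise to split into $\ell'$ internally disjoint \emph{segments}, the $t$-th segment being a path through exactly the two triples $T_{u_t},T_{w_t}$ of some pair via $m_{\{u_t,w_t\}}$ and the four internal edges of those triples, consecutive segments being joined by single linking edges. Each segment then uses $5$ internal/matching edges, so the $\ell'$ segments plus the $\ell'$ linking edges plus the $r$ padding edges account for exactly $\ell=6\ell'+r$ edges and all $\ell$ visited nodes, and the $2\ell'$ triples met by the cycle are matched up in pairs. Thus the matching edges used by such a cycle form an $M\in\match{\ell'}{n''}$, and conversely every $M\in\match{\ell'}{n''}$ is realized (ordering its edges arbitrarily and splicing in linking and padding edges); with the coordinate projection $q(x)_{\{u,w\}}:=x_{m_{\{u,w\}}}$ we obtain $q(P')=\PMatch{\ell'}{n''}$.

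For the symmetries, let $H=\symGr{n''}$ act on $\PMatch{\ell'}{n''}$ by permuting the nodes of $K_{n''}$; for $\tau\in H$ the permutation $\pi_\tau\in\symGr{n}$ with $v_i\mapsto(\tau(v))_i$ (fixing padding and excluded nodes) then satisfies $\pi_\tau.P'=P'$ and $q(\pi_\tau.x)=\tau.q(x)$ for $x\in P'$, provided the matching edges are chosen $\symGr{n''}$-equivariantly ($m_{\{u,w\}}\mapsto m_{\{\tau(u),\tau(w)\}}$) and the internal, padding and linking edge families are each $\symGr{n''}$-invariant. Lemma~\ref{lem:boundForFace} then turns a symmetric extension of $\PCycl{\ell}{n}$ of size $f$ into a symmetric extension of $\PMatch{\ell'}{n''}$ (with respect to $\symGr{n''}$) of size at most $f$, since the relevant extension is a face of the original one. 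As $42\le\ell\le n$ guarantees $1\le\ell'\le n''/2$ (using $2\lfloor\ell/6\rfloor\le\lfloor\ell/3\rfloor\le\lfloor n/3\rfloor$ and $n''=\lfloor n/3\rfloor-O(1)$), Theorem~\ref{thm:lbgeneral} gives $f\ge C\binom{n''}{\lfloor(\ell'-1)/2\rfloor}$, which by Lemma~\ref{lem:binomialAsymptot} is at least $C'\binom{\lfloor n/3\rfloor}{\lfloor(\lfloor\ell/6\rfloor-1)/2\rfloor}$ for a suitable constant $C'>0$, as desired.

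The main obstacle is the explicit design of the matching, padding and linking edge families so that $P'$ is \emph{rigid}: one must rule out every length-$\ell$ cycle in $P'$ that does not decompose as above --- e.g.\ one entering some triple through two matching edges (whose projection would lie outside the matching polytope) or covering a triple only partially --- while keeping $q(P')$ exactly equal to $\PMatch{\ell'}{n''}$ and the whole configuration $\symGr{n''}$-symmetric. This is precisely the point where Yannakakis' triangle-gadget construction has to be reworked for cardinality-restricted cycles and matchings, and it is for the clean treatment of this (and of the accompanying floor arithmetic) that the hypothesis $\ell\ge42$ is convenient. Once the gadget is fixed, the verifications that $q(P')=\PMatch{\ell'}{n''}$, that $q$ is equivariant, and hence the reduction to Theorem~\ref{thm:lbgeneral} through Lemma~\ref{lem:boundForFace}, are routine.
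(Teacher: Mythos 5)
Your outline follows the same overall strategy as the paper --- realize a cardinality-restricted matching polytope as an equivariant coordinate projection of a face $P'$ of $\PCycl{\ell}{n}$ and then combine Lemma~\ref{lem:boundForFace} with Theorem~\ref{thm:lbgeneral} and Lemma~\ref{lem:binomialAsymptot} --- but the step that actually carries the theorem, namely the concrete construction of $P'$ together with the proof that $q(P')=\PMatch{\ell'}{n'}$, is precisely what you leave open: you say yourself that the ``explicit design of the matching, padding and linking edge families so that $P'$ is rigid'' is the main obstacle and is not provided. That is not a routine detail to be filled in later; it is the substance of the proof, so as written the argument has a genuine gap.

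Moreover, the particular shape you propose makes the gap hard to close, because you only allow faces cut out by constraints of the form $x_e=0$. Whatever $\symGr{n''}$-invariant families of triple-internal, matching, linking and padding edges you keep, the linking edges must be plentiful enough to join segments for \emph{every} matching and every cyclic order of its edges; but then the zero-constraint face also contains length-$\ell$ cycles that use too few (possibly no) matching edges --- e.g.\ cycles wandering along linking edges and partially traversed triples, or mixing $\ell'-1$ genuine segments with a detour --- and the $q$-images of such vertices violate $x(E_{n''})=\ell'$, so they do not lie in $\PMatch{\ell'}{n''}$ and $q(P')$ is strictly larger than the matching polytope. The paper avoids exactly this trap: it uses a simpler pendant-path gadget ($t_i$ may only be adjacent to $s_i$ and $u_i$, and all edges in $(S:U)$ and $(S:R)$ are forbidden), and, crucially, it does \emph{not} define $P'$ by vanishing coordinates alone but intersects further with the faces on which the valid inequalities $x(\delta(T))\le 4\lfloor\ell/6\rfloor$ and $x(E(S))\ge\lfloor\ell/6\rfloor$ are tight; these two additional equations are what force every vertex of $P'$ to induce a perfect matching on the $2\lfloor\ell/6\rfloor$ visited nodes of $S$, i.e.\ a matching of size $\ell'$ in $K_{n'}$, and conversely every such matching is realized by splicing in edges through $U$ and $R$. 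Lemma~\ref{lem:boundForFace} only requires $P'$ to be a face, so such tight valid inequalities are available to you as well; without something of this kind (or a substantially different gadget, explicitly verified) your construction cannot be made rigid. The remaining ingredients of your outline --- the $\symGr{n'}$-equivariance of the projection, the floor arithmetic giving $\ell'\le n'/2$, and the passage from $\binom{n'}{\cdot}$ to $\binom{\lfloor n/3\rfloor}{\cdot}$ via Lemma~\ref{lem:binomialAsymptot} --- do match the paper's proof, but they are the easy part.
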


\begin{proof}
	For $\ell\le n$, let us define $\bar{\ell}\in\{0,\dots,5\}$ and $n',\ell'\in\NN$ via
	\begin{equation*}
		\bar{\ell}=\ell\ \text{mod } 6
	    \ ,\quad
		n'=\lfloor\frac{n-\bar{\ell}}{3}\rfloor
		\ ,\quad\text{and}\quad
		\ell'=\lfloor\frac{\ell}{6}\rfloor=\frac{\ell-\bar{\ell}}{6}\,.
	\end{equation*} 
	For later reference, let us  argue that we have
	\begin{equation}\label{eq:lbgeneralcycl:prime}
		\ell'\le\frac{n'}{2}\,.
	\end{equation}
	In order to establish~\eqref{eq:lbgeneralcycl:prime}, we have to show
	\begin{equation}\label{eq:lbgeneralcycl:prime2}
		\frac{\ell-\bar{\ell}}{3}\le\lfloor\frac{n-\bar{\ell}}{3}\rfloor\,,
	\end{equation}
	which follows readily for $\ell\le n-2$ (due to $\lfloor a/3\rfloor\ge (a-2)/3$ for all $a\in\ZZ$). For $\ell\ge n-2$ (thus $0\le n-\ell\le 2$) we have
	\begin{equation*}
		(n-\bar{\ell})\text{ mod }3=
		((n-\bar{\ell})\text{ mod }6)\text{ mod }3=
		(n-\ell)\text{ mod }3=
		n-\ell\,,
	\end{equation*}
	and thus~\eqref{eq:lbgeneralcycl:prime2} in this case is satisfied due to
	\begin{equation*}
		\lfloor\frac{n-\bar{\ell}}{3}\rfloor
		=\frac{n-\bar{\ell}}{3}-\frac13((n-\bar{\ell})\text{ mod }3)
		=\frac{n-\bar{\ell}}{3}-\frac{n-\ell}{3}
		=\frac{\ell-\bar{\ell}}{3}\,.
	\end{equation*}

	As we have $3n'+\bar{\ell}\le n$, we can find four pairwise disjoint subsets~$S$, $T$, $R$, and~$U$ of nodes of the complete graph~$K_n=(V,E_n)$ on~$n$ nodes with  $|S|=|T|=|U|=n'$ and $|R|=\bar{\ell}$ (see Fig.~\ref{fig:MatchCycl}). We denote the elements of these sets as follows:
	\begin{equation*}
		S=\{s_1,\dots,s_{n'}\}\quad
		T=\{t_1,\dots,t_{n'}\}\quad
		U=\{u_1,\dots,u_{n'}\}\quad
		R=\{r_1,\dots,r_{\bar{\ell}}\}
	\end{equation*}
	Define the subset
	\begin{multline*}
		E^0=(S:U)\cup (S:R)\cup\setDef{\{t_i,v\}\in E_n}{i\in\ints{n'}, v\in V\setminus\{s_i,u_i\}}
	\end{multline*}
	of edges of~$K_n$, and 
	denote by $F$ the face of $\PCycl{\ell}{n}$ that is defined by $x_e=0$ for all $e\in E^0$. 
	
	Every cycle $C\in\cycl{\ell}{n}$ with $C\cap E^0=\varnothing$ satisfies $|V(C)\cap T|\le 2\lfloor\ell/6\rfloor$, because~$C$ visits  at least two nodes (from $V\setminus T$) between any two visits to~$T$, and $|V(C)\cap T|$ is even. 
Therefore, denoting
\begin{equation*}
	\tilde{\mathcal{C}}=\setDef{C\in\cycl{\ell}{n}}{C\cap E^0=\varnothing, |V(C)\cap T|=2\lfloor\ell/6\rfloor}\,,
\end{equation*}
we find that 
\begin{equation*}
	\tilde{F}=\conv\setDef{\charVec{C}}{C\in\tilde{\mathcal{C}}}
	=\setDef{x\in F}{x(\delta(T))=4\lfloor\ell/6\rfloor}
\end{equation*}
is a face of~$F$. Moreover, for every $C\in\tilde{\mathcal{C}}$, we have $|C\cap E(S)|\ge \lfloor\ell/6\rfloor$. Thus, with
\begin{equation*}
	\mathcal{C}'=\setDef{C\in\tilde{\mathcal{C}}}{|C\cap E(S)|=\lfloor\ell/6\rfloor}
\end{equation*}
we find that 
\begin{equation*}
	P'=\conv\setDef{\charVec{C}}{C\in\mathcal{C}'}=\setDef{x\in \tilde{F}}{x(E(S))=\lfloor\ell/6\rfloor}
\end{equation*}
is a face of~$\tilde{F}$. It is the face 
\begin{equation*}
	P'=\setDef{x\in \PCycl{\ell}{n}}{x(E(S))=\lfloor\ell/6\rfloor,x(\delta(T))=4\lfloor\ell/6\rfloor, x_{E^0}=\zeroVec{}}
\end{equation*} 
of~$\PCycl{\ell}{n}$.

\begin{figure}[ht]\label{fig:MatchCycl}
	\centering
	\includegraphics[height=6cm]{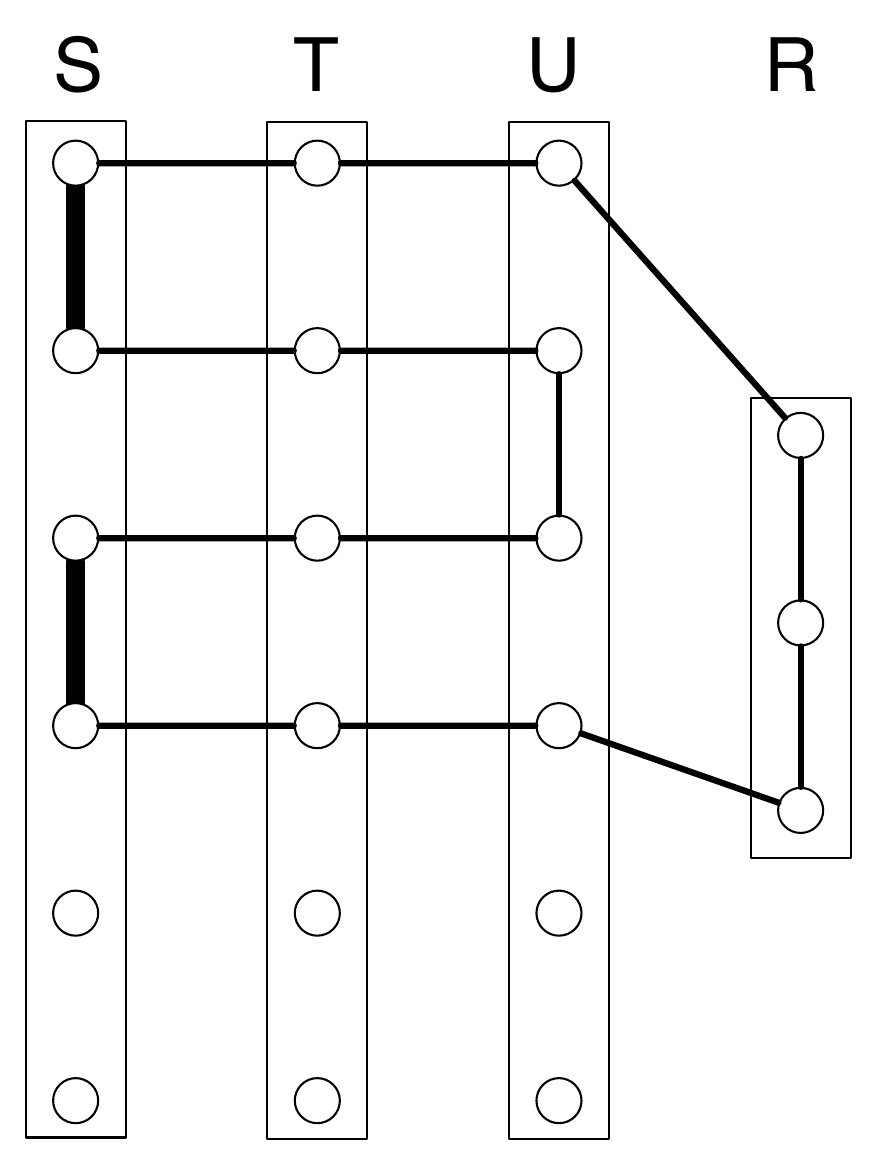}
	\caption{A cycle of length $\ell=15$ in~$K_{21}$ inducing a matching of size~$2$ in~$K_5$.}
\end{figure}

Since a cycle $C\in\cycl{\ell}{n}$ is contained in~$\mathcal{C}'$ if and only if $C\cap E(S)$ is a matching of size~$\ell'=\lfloor\ell/6\rfloor$, we find that via the orthogonal projection $q:\RR^{E_n}\rightarrow\RR^{E(S)}$ we have  
\begin{equation*}
	q(P')=\PMatch{\ell'}{n'}
\end{equation*}
after identification of~$S$ with the node set of $K_{n'}$ via $s_i\mapsto i$ for all $i\in\ints{n'}$. Moreover, for every $\tau\in\symGr{n'}$ the permutation $\pi\in\symGr{n}$ with
\begin{equation*}
	\pi(s_i)=s_{\tau(i)}, \quad
	\pi(t_i)=t_{\tau(i)}, \quad
	\pi(u_i)=u_{\tau(i)}
\end{equation*}
for all $i\in\ints{n'}$, and $\pi(r)=r$ for all $r\in R$ satisfies $\pi.P'=P'$ and
\begin{equation*}
	q(\pi.x)=\tau.q(x)\quad\text{for all }x\in\RR^{E_{n'}}\,.
\end{equation*}
Hence, due to Lemma~\ref{lem:boundForFace}, a symmetric extension of~$\PCycl{\ell}{n}$ of size~$s$ yields a symmetric extension of~$\PMatch{\ell'}{n'}$ of size at most~$s+n^2$ (as one can define the face~$P'$ of~$\PCycl{\ell}{n}$ by 
$2+|E^0|\le n^2$ equations), which, due to~\eqref{eq:lbgeneralcycl:prime} and Theorem~\ref{thm:lbgeneral} implies (with the constant~$C>0$ from Theorem~\ref{thm:lbgeneral})
	\begin{equation}\label{eq:lbgeneralcycl:s}
		s\ge \frac{C}{2}\cdot\binom{\lfloor\frac{n-\bar{\ell}}{3}\rfloor}{\lfloor(\lfloor\frac{\ell}{6}\rfloor-1)/2\rfloor}
	\end{equation}
for large enough~$n$ (since, due to~$\ell\ge 42$, the binomial coefficient in~\eqref{eq:lbgeneralcycl:s} grows at least cubically in~$n$). Because of $\bar{\ell}\le 5$, Lemma~\ref{lem:binomialAsymptot} implies the existence of a constant~$C'>0$ as claimed in the theorem.
\end{proof}

\begin{corollary}\label{cor:noCompactSymCycl}
	For $\Omega(\log n)\le \ell\le n$, there is no compact extended formulation for $\PCycl{\ell}{n}$ that is symmetric 
(with respect to the group~$\symGr{n}$ acting via permuting the nodes of~$K_n$ as described in the Introduction).  
\end{corollary}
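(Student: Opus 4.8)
The plan is to deduce Corollary~\ref{cor:noCompactSymCycl} from Theorem~\ref{thm:lbgeneralcycl} by the same routine argument used to obtain Corollary~\ref{cor:noCompactSymMatch} from Theorem~\ref{thm:lbgeneral}. By Lemma~\ref{lem:symExtFormToSymExt}, a symmetric extended formulation of size~$f$ for $\PCycl{\ell}{n}$ yields a symmetric extension of size at most~$f$; since the ambient space of $\PCycl{\ell}{n}$ has dimension $\binom n2$ and all vertex coordinates are $0/1$, it therefore suffices to show that the minimum size of a symmetric extension of $\PCycl{\ell}{n}$ is not bounded by any polynomial in~$n$ once $\ell=\Omega(\log n)$.

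So, fixing a constant $c_0>0$ and considering only $\ell$ with $c_0\log n\le\ell\le n$, I would first pass to $n$ large enough that $c_0\log n\ge 42$, so that the hypothesis $42\le\ell\le n$ of Theorem~\ref{thm:lbgeneralcycl} is met (the finitely many smaller~$n$ being irrelevant for the asymptotic notion of compactness); this gives the lower bound $C'\binom{N}{k}$ with $N=\lfloor n/3\rfloor$ and $k=\lfloor(\lfloor\ell/6\rfloor-1)/2\rfloor$. Next I would estimate this binomial coefficient. Using $\lfloor t\rfloor\ge t-1$ one gets $k\ge\tfrac{\ell}{12}-1$, hence $k\ge c_1\log N$ for a suitable constant $c_1>0$ and all large~$n$; on the other hand $k\le\tfrac{\ell}{12}\le\tfrac{n}{12}\le\tfrac12 N$, so $k$ lies in the range where $\binom{N}{\cdot}$ is nondecreasing, and consequently $\binom{N}{k}\ge\binom{N}{\lceil c_1\log N\rceil}$. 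Finally $\binom{N}{\lceil c_1\log N\rceil}\ge\bigl(N/\lceil c_1\log N\rceil\bigr)^{\lceil c_1\log N\rceil}$, whose logarithm is $\Omega\bigl((\log N)^2\bigr)=\Omega\bigl((\log n)^2\bigr)$; thus the bound is $n^{\Omega(\log n)}$, which is superpolynomial.

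Putting these together, every symmetric extension of $\PCycl{\ell}{n}$ has size $n^{\Omega(\log n)}$ for $\Omega(\log n)\le\ell\le n$, whence---via Lemma~\ref{lem:symExtFormToSymExt}---no compact symmetric extended formulation exists in this range, which is the claim. I do not anticipate a real obstacle: everything past Theorem~\ref{thm:lbgeneralcycl} is elementary, and the only points needing a modicum of care are the bookkeeping of the constants hidden in the two $\Omega(\log n)$ bounds (handled by $\lfloor t\rfloor\ge t-1$) and the verification $k\le\tfrac12 N$ that puts $k$ into the monotone regime of the binomial coefficient. This is exactly parallel to the one-line derivation of Corollary~\ref{cor:noCompactSymMatch} already recorded above.
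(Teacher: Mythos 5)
Your derivation is correct and is exactly the argument the paper intends: the corollary is stated as an immediate consequence of Theorem~\ref{thm:lbgeneralcycl} via Lemma~\ref{lem:symExtFormToSymExt}, mirroring how Corollary~\ref{cor:noCompactSymMatch} follows from the $n^{\Omega(\log n)}$ bound in the matching case, and your binomial-coefficient estimates just make that routine step explicit (the only nit: $\lfloor(\lfloor\ell/6\rfloor-1)/2\rfloor\ge\ell/12-2$ rather than $\ell/12-1$, which is immaterial).
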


On the other hand, if we drop the symmetry requirement, we find extensions of the following size.

\begin{theorem}\label{thm:sizenonsymextformcycl}
	For all~$n$ and~$\ell$, there are extensions for $\PCycl{\ell}{n}$ whose sizes can be bounded by $2^{\Order(\ell)}n^3\log n$ (and for which the encoding lengths of the coefficients  needed to describe them can be bounded by a constant).
\end{theorem}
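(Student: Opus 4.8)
The plan is to follow the proof of Theorem~\ref{thm:sizenonsymextform} almost verbatim, with Edmonds' description of perfect matching polytopes replaced by a ``color-coding'' extended formulation for the polytope of colorful cycles. First I would apply Theorem~\ref{thm:AYZ} with $r=\ell$ to obtain maps $\phi_1,\dots,\phi_q:\ints{n}\rightarrow\ints{\ell}$ with $q=q(n,\ell)\le 2^{\Order(\ell)}\log n$, and set $\mathcal{C}_i=\setDef{C\in\cycl{\ell}{n}}{\phi_i\text{ is bijective on }V(C)}$ as well as $P_i=\conv\setDef{\charVec{C}}{C\in\mathcal{C}_i}$ for $i\in\ints{q}$. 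Since every cycle $C\in\cycl{\ell}{n}$ has $\sabs{V(C)}=\ell$, Theorem~\ref{thm:AYZ} gives $\cycl{\ell}{n}=\mathcal{C}_1\cup\cdots\cup\mathcal{C}_q$, and therefore $\PCycl{\ell}{n}=\conv(P_1\cup\cdots\cup P_q)$.

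The core of the argument is to exhibit, for each $i\in\ints{q}$, an extension of $P_i$ of size $2^{\Order(\ell)}n^3$ whose coefficients are bounded by a constant. Here I would invoke the branched polyhedral systems machinery of~\cite{KL10} (the cycle-polytope counterpart of the matching construction used in Section~\ref{sec:matchPoly}): a cycle $C$ lies in $\mathcal{C}_i$ precisely if it can be traced as a closed walk that collects the colors $\phi_i(v)$, $v\in V(C)$, pairwise distinctly. This is modelled by a layered ``state'' digraph whose nodes are the pairs $(v,A)$ with $v\in\ints{n}$, $A\subseteq\ints{\ell}$ and $\phi_i(v)\in A$ (the layers being indexed by $\sabs{A}$), so that colorful $\ell$-cycles correspond to appropriate closed walks through this digraph; the associated flow-type extended formulation has $2^{\Order(\ell)}\cdot\Order(n^3)$ variables and inequalities (the $\Order(n^3)$ factor being that of the cycle extended formulation supplied by~\cite{KL10}, playing the role of the $\Order(n^2)$ nonnegativity constraints in the matching case, and the $2^{\Order(\ell)}$ accounting for the color bookkeeping), uses only coefficients of constant size, and is exact by the results of~\cite{KL10}.

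Finally I would assemble the pieces via Lemma~\ref{lem:extensionOfUnion}: from extensions of $P_1,\dots,P_q$ of sizes $s_i\le 2^{\Order(\ell)}n^3$ one obtains an extension of $\PCycl{\ell}{n}=\conv(P_1\cup\cdots\cup P_q)$ of size $\sum_{i=1}^q(s_i+1)\le q\cdot 2^{\Order(\ell)}n^3\le 2^{\Order(\ell)}n^3\log n$, with coefficients still bounded by a constant, since the Balas-type construction merely copies the coefficients of the individual $P_i$ and adds homogenizing variables. This establishes the theorem.

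I expect the only genuine difficulty to be the construction, correctness, and exact size bound of the colorful-cycle extension of a single $P_i$: one must verify that the color-coding flow polytope projects onto $P_i$ and that its defining system has the stated size with constant coefficients; everything else is a direct transcription of the argument in Section~\ref{sec:matchPoly}. One should also be slightly careful to count the closed walks without overlaps---e.g.\ by rooting the walk at an arbitrarily fixed color class present on the cycle, or by treating separately cycles that avoid a fixed color class and those that use it---but, as we pass to convex hulls, this affects only the internal construction of the extended formulation for $P_i$ and not the union step.
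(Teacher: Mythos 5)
Your overall route is the paper's: color-coding via Theorem~\ref{thm:AYZ} with $r=\ell$, the decomposition $\PCycl{\ell}{n}=\conv(P_1\cup\cdots\cup P_q)$, a dynamic-programming flow extension for each piece, and Lemma~\ref{lem:extensionOfUnion} to assemble. The gap is exactly the step you flag and then leave unresolved: the extension of a single $P_i$. Your state digraph on the nodes $(v,A)$ is layered by $\sabs{A}$ and hence acyclic, so it contains no closed walks at all; to close a colorful cycle you must remember where it started, and ``rooting at a fixed color class'' is not enough, because the projection to the edge variables has to know which \emph{node} of that class carries the two closing edges. (Your alternative---treating separately cycles that avoid a fixed color class and those that use it---is vacuous here, since every $C\in\mathcal{C}_i$ meets every color class exactly once.) Also, exactness does not come for free from~\cite{KL10}: in the matching case the paper combines~\cite{KL10} with Edmonds' theorem, but for the cycle pieces it builds the extension by hand rather than quoting a ready-made cycle formulation.

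The paper's fix is a second application of Lemma~\ref{lem:extensionOfUnion} inside each $P_i$: writing $V_\ell=\phi_i^{-1}(\ell)$, it decomposes $P_i=\conv\bigcup_{v^\star\in V_\ell}P_i(v^\star)$, where $P_i(v^\star)$ is the convex hull of the cycles in $\mathcal{C}_i$ through the fixed node $v^\star$. For each $v^\star$ one takes the acyclic digraph with nodes $(A,v)$ for $A\subseteq\ints{\ell-1}$, $v\in\phi_i^{-1}(A)$, plus $s$ and $t$ (which encode $v^\star$); cycles through $v^\star$ correspond to $s$-$t$-paths (two per cycle, one per orientation), the $s$-$t$-path polytope is exactly described by nonnegativity, flow conservation, and one unit of flow leaving~$s$, and the digraph has $\Order(2^{\ell}n^2)$ arcs with $0/1$ coefficients; the projection sends each arc to the corresponding edge of $K_n$, with the arcs at $s$ and $t$ mapped to the two edges at $v^\star$. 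Summing over the at most $n$ choices of $v^\star$ gives the $\Order(2^{\ell}n^3)$ bound per $P_i$---this enumeration over roots, not a cycle formulation from~\cite{KL10}, is where the third factor of $n$ comes from---and the outer union over $i\in\ints{q}$ then yields $2^{\Order(\ell)}n^3\log n$ exactly as you computed. With this per-piece construction supplied, your argument is complete.
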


Before we prove Theorem~\ref{thm:sizenonsymextformcycl}, we state a consequence that is similar to Corollary~\ref{cor:noCompactSymMatch} for matching polytopes. It shows that, despite the non-existence of symmetric extensions for the polytopes associated with cycles of length $\Theta(\log n)$ (Corollary~\ref{cor:noCompactSymCycl}), there are non-symmetric compact extensions of these polytopes. 

\begin{corollary}\label{cor:CompactCycl}
	For all~$n$ and $\ell\le\Order(\log n)$, there are compact extended formulations for $\PCycl{\ell}{n}$.
\end{corollary}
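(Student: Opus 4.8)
The plan is to deduce Corollary~\ref{cor:CompactCycl} from Theorem~\ref{thm:sizenonsymextformcycl} in exactly the same way Corollary~\ref{cor:CompactMatch} follows from Theorem~\ref{thm:sizenonsymextform}. Recall that an extension (or extended formulation) of a family of polytopes is \emph{compact} if its size and the encoding lengths of the involved coefficients are bounded by a polynomial in the dimension~$m=\binom n2$ of the ambient space of $\PCycl{\ell}{n}\subseteq\RR^{E_n}$ and in the maximal encoding length of the coordinates of its vertices. Since the vertices of~$\PCycl{\ell}{n}$ are $0/1$-vectors, the latter quantity is a constant, so compactness here just means: size polynomial in~$n$, with coefficients of constant encoding length.

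First I would invoke Theorem~\ref{thm:sizenonsymextformcycl}, which provides, for every~$n$ and~$\ell$, an extension for~$\PCycl{\ell}{n}$ of size at most $2^{\Order(\ell)}\,n^3\log n$ whose coefficients have constant encoding length. Now substitute the hypothesis $\ell\le\Order(\log n)$, say $\ell\le c\log n$ for a constant~$c$. Then $2^{\Order(\ell)}=2^{\Order(\log n)}=n^{\Order(1)}$, so the size bound becomes $n^{\Order(1)}\cdot n^3\log n=n^{\Order(1)}$, a polynomial in~$n$. Together with the constant bound on the encoding lengths of the coefficients, this says precisely that the extension is compact. An extension of this size is of course described by an extended formulation of the same size (each facet contributing one inequality), so we obtain a compact extended formulation for~$\PCycl{\ell}{n}$, as claimed.

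There is essentially no obstacle here: the corollary is a direct specialization of Theorem~\ref{thm:sizenonsymextformcycl} obtained by plugging $\ell=\Order(\log n)$ into the size bound and observing that $2^{\Order(\log n)}$ is polynomial in~$n$. The only point worth stating explicitly is the translation between ``extension'' and ``extended formulation'' (one inequality per facet) and the remark that, the vertices being $0/1$, the encoding length of the vertex coordinates does not contribute anything non-constant to the notion of compactness. This is the same boilerplate as in the matching case (Corollary~\ref{cor:CompactMatch}), so I would keep the proof to a couple of sentences.

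\begin{proof}
	By Theorem~\ref{thm:sizenonsymextformcycl}, for all~$n$ and~$\ell$ there is an extension for~$\PCycl{\ell}{n}$ of size at most $2^{\Order(\ell)}n^3\log n$ whose coefficients have encoding lengths bounded by a constant. If $\ell\le\Order(\log n)$, then $2^{\Order(\ell)}=2^{\Order(\log n)}$ is bounded by a polynomial in~$n$, hence so is the size $2^{\Order(\ell)}n^3\log n$ of the extension. As $\PCycl{\ell}{n}$ is a $0/1$-polytope, the encoding lengths of the coordinates of its vertices are bounded by a constant, so the extension (and thus an extended formulation of the same size describing it) is compact.
\end{proof}
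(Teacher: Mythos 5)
Your proposal is correct and matches the paper, which states Corollary~\ref{cor:CompactCycl} as an immediate consequence of Theorem~\ref{thm:sizenonsymextformcycl} without further argument: with $\ell\le\Order(\log n)$ the bound $2^{\Order(\ell)}n^3\log n$ is polynomial in~$n$, and the coefficients have constant encoding length, so the extension is compact. Your added remarks on passing from extension to extended formulation and on the 0/1-vertices are fine and harmless.
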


The rest of the section is devoted to prove Theorem~\ref{thm:sizenonsymextformcycl}, i.e., to construct an extension of $\PCycl{\ell}{n}$ whose size is bounded by $2^{\Order(\ell)}n^3\log n$. We proceed similarly to the proof of Theorem~\ref{thm:sizenonsymextform} (the construction of extensions for matching polytopes), this time starting with   maps $\phi_1,\dots,\phi_q$ as guaranteed to exist by Theorem~\ref{thm:AYZ} with $r=\ell$ and $q=q(n,\ell)\le 2^{\Order(\ell)}\log n$, and defining 
\begin{equation*}
	\mathcal{C}_i=\setDef{C\in\cycl{\ell}{n}}{\phi_i\text{ is bijective on }V(C)}
\end{equation*}
for each $i\in\ints{q}$. Thus, we have $\cycl{\ell}{n}=\mathcal{C}_1\cup\cdots\cup\mathcal{C}_q$, and hence, 
\begin{equation}\label{eq:convUnionCycl}
	\PCycl{\ell}{n}=\conv(P_1\cup\cdots\cup P_q)
\end{equation}
with $P_i=\conv\setDef{\charVec{C}}{C\in\mathcal{C}_i}$
for all $i\in\ints{q}$. Due to Lemma~\ref{lem:extensionOfUnion}, it suffices to exhibit, for each~$i\in\ints{q}$, an extension of~$P_i$ of size bounded by $\Order(2^{\ell}\cdot n^3)$ (with the constant independent of~$i$). Towards this end, let for $i\in\ints{q}$
\begin{equation*}
	V_c=\phi_i^{-1}(c)\quad\text{for all }c\in\ints{\ell}\,,
\end{equation*}
and define, for each $v^{\star}\in V_{\ell}$,
\begin{equation*}
	P_i(v^{\star})=\conv\setDef{\charVec{C}}{C\in\mathcal{C}_i,v^{\star}\in V(C)}\,.
\end{equation*}
Thus, we have
\begin{equation*}
	P_i=\conv\bigcup_{v^{\star}\in V_{\ell}}{P_i(v^{\star})}\,,
\end{equation*}
and hence, again due to Lemma~\ref{lem:extensionOfUnion}, it suffices to construct  extensions of the $P_i(v^{\star})$, whose sizes are bounded by $\Order(2^{\ell}\cdot n^2)$. 

In order to derive such extensions define, for each $i\in\ints{q}$ and $v^{\star}\in V_{\ell}$, a directed acyclic graph~$D$ with nodes
\begin{equation*}
	(A,v)\quad\text{for all }A\subseteq\ints{\ell-1}\text{ and }v\in \phi_i^{-1}(A)\,,
\end{equation*}
as well as two additional nodes~$s$ and~$t$, and arcs 
\begin{equation*}
	\big(s,(\{\phi_i(v)\},v)\big)\quad\text{and}\quad\big((\ints{\ell-1},v),t\big)
\end{equation*} 
for all $v\in\phi_i^{-1}(\ints{\ell-1})$, as well as 
\begin{equation*}
	\big((A,v),(A\cup\{\phi_i(w)\},w)\big)
\end{equation*} 
for all $A\subseteq\ints{\ell-1}$, $v\in\phi_i^{-1}(A)$, and $w\in\phi_i^{-1}(\ints{\ell-1}\setminus A)$. This is basically the dynamic programming digraph (using an idea going back to~\cite{HK62}) from the color-coding method for finding  paths of prescribed lengths described in~\cite{AYZ95}. Each $s$-$t$-path in~$D$ corresponds to a cycle in~$\mathcal{C}_i$ that visits~$v^{\star}$, and each such cycle, in turn, corresponds to two $s$-$t$-paths in~$D$ (one for each of the two directions of transversal). 

Defining $Q_i(v^{\star})$ as the convex hull of the characteristic vectors of all $s$-$t$-paths in~$D$ in the arc space of~$D$, we find that~$P_i(v^{\star})$ is the image of $Q_i(v^{\star}))$ under the projection whose component function  corresponding to the edge $\{v,w\}$ of~$K_n$ is given by the sum of all arc variables corresponding to arcs $((A,v),(A',w))$ (for  $A,A'\subseteq\ints{\ell-1}$) if $v^{\star}\not\in\{v,w\}$, and by the sum of the two arc variables corresponding to $(s,(\{\phi_i(w)\},w))$ and $((\ints{\ell-1},w),t)$ in case of $v=v^{\star}$. Clearly, $Q_i(v^{\star})$ can be described by the nonnegativity constraints, the flow conservation constraints for all nodes in~$D$ different from~$s$ and~$t$, and by the equation stating that there must be exactly one flow-unit leaving~$s$. 
 As the number of arcs of~$D$ is bounded by $\Order(2^{\ell}\cdot n^2)$, we thus have found an extension of~$P_i(v^{\star})$ of the desired size.

\section{Conclusions}
\label{sec:rem}

The results presented in this paper demonstrate that  there are polytopes which have compact extended formulations though they do not admit compact symmetric ones. These polytopes are associated with matchings (or cycles) of some prescribed cardinalities.
Nevertheless, whether there are compact extended formulations for  general matching polytopes (or for  perfect matching polytopes) or not, remains one of the most interesting open question here. In fact, it is even unknown whether there are  any (non-symmetric) extended  formulations of these polytopes of size $2^{\text{o}(n)}$. 
\new{
In general, it is not at all well understood how small extended formulations of concrete polytopes can be.  
 One problem is that the currently available methods to bound the sizes of general extended formulations from below have rather limited power (see~\cite{FKPT11}). Note, however, that via  counting arguments one can prove, e.g., that there are 0/1-polytopes (even independence polytopes of matroids) that do not admit compact extended formulations~\cite{Rot11}.

In any case, the investigation of the  limits of the  concept of extended formulations seems to be not only a quite  relevant topic from the point of view of optimization, but it also opens many interesting connections to other branches of mathematics. Some of these have played a role in this paper, others would be the nonnegative rank of matrices and communication complexity. For details, we refer once more to Yannakakis paper~\cite{Yan91} (see also~\cite{FKPT11,Kai11}). 
}

\old{
Actually, it seems that there are almost no lower  bounds known on the sizes of extensions,
 except for the one obtained by the observation that every extension~$Q$ of a polytope~$P$ with~$f$ faces (vertices, edges, ..., facets) has at least~$f$ faces itself, thus~$Q$ has at least~$\log f$ facets (since a face is uniquely determined by the subset of facets it is contained in)~\cite{Goe09}. It would be most interesting to obtain other lower bounds, including special ones for 0/1-polytopes. 
 We refer once more to Yannakakis' paper~\cite{Yan91} for a very interesting interpretation of the smallest possible size of an extension of a polytope~$P$ to the ``positive rank'' of a slack matrix associated to some inequality description of~$P$.  

We finally would like to mention a question with respect to the definition of the \emph{size} of an extension. 
As indicated in Section~\ref{sec:symext}, it would be elegant to ignore the number of variables here, since a non-trivial lineality space of the extension may seem to be unnecessary, because it can easily be removed by taking  intersection with the orthogonal complement of the lineality space and representing the intersection by means of coordinates with respect to some basis of that orthogonal complement (note that the lineality space of an extension of a \emph{polytope} is contained in the kernel of the projection).  However, it is unclear whether this can always be done in such a way that the symmetry of the extension is preserved. 
Thus, another open question is whether lineality spaces can really help in constructing small \emph{symmetric} extended formulations.  
		
	%
	%
}

\end{document}